\newtheorem{thm}{Theorem}[section]
\newtheorem{lem}[thm]{Lemma}
\newtheorem{prop}[thm]{Proposition}
\newtheorem{cor}[thm]{Corollary}
\theoremstyle{definition}
\newtheorem{mydef}[thm]{Definition}
\newcommand{\R}{\mathbb{R}}
\newcommand{\N}{\mathbb{N}}
\newcommand{\E}{\mathcal{E}}
\newcommand{\al}{\alpha}
\newcommand{\bb}{\beta}
\newcommand{\om}{\omega}
\newcommand{\ee}{\varepsilon}
\newcommand{\dd}{\partial}
\newcommand{\PSH}{\text{PSH}}
\newcommand{\CAP}{\text{Cap}}
\begin{document}

\title{A complete metric topology on relative low energy spaces}
\author{Prakhar Gupta }
\address{Department of Mathematics, University of Maryland, College Park, Maryland, USA}
\email{pgupta8@umd.edu}
\begin{abstract}
    In this paper, we show that the low energy spaces in the prescribed singularity case $\E_{\psi}(X,\theta,\phi)$ have a natural topology which is completely metrizable. This topology is stronger than convergence in capacity. 
\end{abstract}

\keywords{K\"ahler Manifolds, Pluripotential Theory, Monge-Amp\`ere Measures, Finite Energy Classes, K\"ahler-Ricci Flow}

\subjclass[2000]{Primary: 32U05; Secondary: 32Q15, 53E30, 54E35}

\maketitle

\tableofcontents

\section{Introduction} \label{sec: introduction}

Let $(X,\om)$ be a compact K\"ahler manifold. By the $dd^{c}$-lemma, any K\"ahler metric cohomologous to $\om$ is of the form $\om_{u} := \om + dd^{c}u$. This led to studying the space
\[
\mathcal{H} = \{ u\in C^{\infty}(X) : \om_{u} := \om + dd^{c}u > 0\}
\]
of smooth functions on $X$ to find a canonical metric in the same cohomology class as $\om$. Mabuchi \cite{mabuchiriemannianstructure}, Semmes \cite{semmesriemannianmetric}, and Donaldson \cite{donaldsonriemannianmetric} independently found a Riemannian structure on $\mathcal{H}$ given by 
\[
\langle \phi, \psi \rangle_{u} := \frac{1}{\text{Vol(X)}} \int_{X} \phi\psi \om^{n}_{u} 
\]
for $u \in \mathcal{H}$ and $\phi,\psi \in T_{u}\mathcal{H} = C^{\infty}(X)$.  Later, Chen \cite{chenspaceofkahlermetrics} showed that this Riemannian structure makes $\mathcal{H}$ a metric space $(\mathcal{H},d_{2})$. 

\sloppy Darvas \cite{darvascompletionofspaceofkahlerpotentials} showed that the completion $\overline{(\mathcal{H}, d_{2})}$ can be identified with $ (\E^{2}(X,\om),d_{2})$, where  $\E^{2}(X,\om)$ is the space of finite energy introduced by Guedj-Zeriahi \cite{GZweightedmongeampereenergy}. Upon introducing the Finsler type structure on $\mathcal{H}$, Darvas \cite{darvasgeoemtryoffiniteenergy} introduced metrics $d_{p}$ on $\mathcal{H}$ for $p \geq 1$ and showed that their completions are $(\E^{p}(X,\om),d_{p})$. The metric $(\mathcal{H}, d_{1})$ and its completion $(\E^{1}(X,\om),d_{1})$ were useful in studying special metrics on K\"ahler manifolds  \cite{darvasrubinsteinpropernessconjecture}, \cite{bermandarvaskenregyandkstability}, \cite{chenchengcsckI},\cite{chenchengcscKII}. 

This led to further attempts to find natural complete metrics on the various subspaces of $\PSH(X,\theta)$ for varying $\theta$. Recall that, for a smooth closed $(1,1)$-form $\theta$ we say that an integrable function $u : X \to \R\cup\{-\infty\}$ is $\theta$-psh if locally $u$ can be written as a sum of a smooth function and a plurisubharmonic function and $\theta + dd^{c}u \geq 0$ in the sense of currents. The following list of works illustrates the interest in finding natural metrics on these spaces. 
\begin{enumerate}
    \item Darvas-Di Nezza-Lu \cite{darvas2021l1} found that the space $\E^{1}(X,\theta)$ has a complete metric for $\theta$ merely big. 
    \item Using approximation by K\"ahler classes Di Nezza-Lu found that the spaces $\E^{p}(X,\beta)$ for $p\geq 1$ have a complete metric for $\{\beta\}$ a nef and big class in $H^{1,1}(X,\R)$.
    \item In \cite{TrusianiL1metric}, Trusiani showed that the space $\E^{1}(X,\om, \phi)$ has a complete  metric topology where $\om$ is a K\"ahler form and $\phi \in \PSH(X,\om)$ is a model potential.  The space $\E^{1}(X,\om, \phi)$ consists of potentials more singular than $\phi$ and having finite energy relative to $\phi$. See Section \ref{sec: preliminaries} for relevant definitions and results about model potentials and spaces with prescribed singularities.
    \item Xia \cite{Xia2019MabuchiGO} extended this result to show that the spaces $\E^{p}(X,\theta,\phi)$ have complete metric space structure for $\theta$ having big cohomology class and $\phi \in \PSH(X,\theta)$ a model potential.  
    \item Most recently, Darvas \cite{darvas2021mabuchi} showed that the space $\E_{\psi}(X,\om)$ has a natural complete metric, when $\om$ is K\"ahler and $\psi$ is a low energy weight as introduced by Guedj-Zeriahi \cite{GZweightedmongeampereenergy}. In the process, Darvas used the geodesics on $\mathcal{H}$ introduced by \cite{mabuchiriemannianstructure}, \cite{semmesriemannianmetric}, and \cite{donaldsonriemannianmetric}. 
\end{enumerate}

Note that only \cite{darvas2021mabuchi} deals with the low energy weights. Working with the low energy space is desirable because 
\[
\E(X,\om) = \bigcup_{\psi} \E_{\psi}(X,\om)
\]
where the union is over all low energy weights (see \cite[Proposition 2.2]{GZweightedmongeampereenergy}). However, all the finite energy spaces $\E^{p}(X,\om)$ are contained in $  \E^{1}(X,\om) \subsetneq \E(X,\om)$. Another method of measuring distance between potentials $u,v \in \E(X,\om)$ is proposed by Lempert \cite{lempertspaceofplurisubharmonicfunctions} where he measures the distance $\rho(u,v)$ by a function $\rho(u,v) : (0,V) \to \R$ where $V = \int_{X}\om^{n}$ is the volume of $(X,\om)$.

In \cite{darvas2021mabuchi}, the author noted that the metric $d_{\psi}$ on the space $\E_{\psi}(X,\om)$ satisfies 
\[
d_{\psi}(u,v) \leq \int_{X} \psi(u-v)(\om^{n}_{u} + \om^{n}_{v}) \leq 2^{2n+5} d_{\psi}(u,v)
\]
for any $u,v \in \E_{\psi}(X,\om)$. In \cite{darvas2021mabuchi}, the author asked if the central expression in the above equation can be shown to satisfy a quasi-triangle inequality, without constructing the metric $d_{\psi}$ using geodesics in $\mathcal{H}$, and thus show that the spaces $\E_{\psi}(X,\theta)$ have a quasi-metric space structure. This is the question we answer in this paper, in a more general context of low energy spaces in the prescribed singularity setting. Note that in the big case and in the prescribed singularity case, there are no $C^{1,1}$ geodesics, and hence the methods of \cite{darvas2021mabuchi} do not work. 

\begin{thm}\label{thm: complete metric space structure on prescribed singularity case}
Let $\theta$ be a closed smooth $(1,1)$-form whose cohomology class is big. Let $\phi \in \PSH(X,\theta)$ be a model potential such that $\int_{X}\theta^{n}_{\phi} > 0$. Then for any $u,v \in \E_{\psi}(X,\theta,\phi)$,
\begin{equation} \label{eq: proposed quasi-metric}
    I_{\psi}(u,v) := \int_{X} \psi(u-v)(\theta^{n}_{u} + \theta^{n}_{v})
\end{equation}
is a quasi-metric. Moreover, the topology induced on $\E_{\psi}(X,\theta,\phi)$ by $I_{\psi}$ is completely metrizable. 
\end{thm}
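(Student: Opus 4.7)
Non-negativity is immediate from $\psi\ge 0$, symmetry follows from $\psi$ being even, and separation reduces to a comparison-principle argument: if $I_\psi(u,v)=0$, then $u=v$ on a set of full $\theta_u^n$-mass and full $\theta_v^n$-mass, and the domination principle in the relative full-mass class $\E(X,\theta,\phi)$ forces $u=v$ everywhere.

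\textbf{Quasi-triangle inequality.} This is the principal obstacle, since the $C^{1,1}$-geodesic machinery used in \cite{darvas2021mabuchi} is unavailable in the big or prescribed-singularity setting. My plan has two ingredients. First, a low-energy weight is even and concave on $\R_+$, hence subadditive: $\psi(a+b)\le\psi(a)+\psi(b)$. Combined with $u-w=(u-v)+(v-w)$, this bounds $I_\psi(u,w)$ by
\[
\int_X\psi(u-v)(\theta_u^n+\theta_w^n)+\int_X\psi(v-w)(\theta_u^n+\theta_w^n),
\]
so the task reduces to the ``measure-swap'' estimate
\[
\int_X\psi(u-v)\,\theta_w^n\le C\!\left(\int_X\psi(u-v)\,\theta_v^n+I_\psi(v,w)\right),
\]
together with its symmetric analogue. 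Second, to prove this swap I would first truncate to bounded potentials via the canonical approximation $u_t:=P_\theta[\phi](\max(u,\phi-t))$, whose non-pluripolar Monge--Amp\`ere measures converge monotonically; then I would expand $\theta_w^n-\theta_v^n=dd^c(w-v)\wedge\sum_{k=0}^{n-1}\theta_w^{n-1-k}\wedge\theta_v^k$, integrate by parts against $\psi(u-v)$, and absorb the resulting mixed terms $d\psi(u-v)\wedge d^c(w-v)\wedge(\cdots)$ via Cauchy--Schwarz for positive currents, using the concavity of $\psi$ to dominate $|\psi'(u-v)|$ by $\psi(u-v)$ plus a constant. The genuine difficulty lies in controlling the non-vanishing boundary contributions of this integration by parts in the relative setting, where the recent envelope techniques of \cite{darvas2021l1,Xia2019MabuchiGO,TrusianiL1metric} will need to be invoked to replace the usual Stokes-theorem step.

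\textbf{Complete metrizability.} Once $I_\psi$ is a quasi-metric, Frink's metrization theorem produces a genuine metric generating the same topology and the same Cauchy sequences. For completeness, given an $I_\psi$-Cauchy sequence $(u_k)$, I would proceed in three steps: (i) establish a capacity estimate of the form $\CAP(\{|u_k-u_j|>s\})\,\psi(s)\le C\,I_\psi(u_k,u_j)$ and extract a subsequence converging in capacity to some $u\in\PSH(X,\theta)$ with $u\le\phi$; (ii) invoke the Guedj--Zeriahi lower-semicontinuity of the $\psi$-energy together with Fatou's lemma to conclude that $u$ lies in $\E_\psi(X,\theta,\phi)$; (iii) show $I_\psi(u_k,u)\to 0$ by dominated convergence for $\psi(u_k-u)$ against the non-pluripolar Monge--Amp\`ere measures, and upgrade subsequential convergence to convergence of the whole sequence via the quasi-triangle inequality just established. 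The subtle point is (ii), namely verifying that the limit retains the singularity type of $\phi$; this uses the $I_\psi$-control on the envelopes $P_\theta[\phi](u_k)$ to pass the prescribed-singularity condition to the limit.
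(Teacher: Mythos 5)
Your outline of the easy axioms (symmetry from $\psi$ even, non-degeneracy from the domination principle) matches the paper, and Frink/Paluszynski--Stempak metrization is a legitimate substitute for the paper's $p$-chain construction. But both of the hard steps contain genuine gaps. For the quasi-triangle inequality, your measure-swap reduction via $\psi(a+b)\le\psi(a)+\psi(b)$ is sound, but the proposed proof of the swap by expanding $\theta^n_w-\theta^n_v$ as $dd^c(w-v)$ against mixed products, integrating by parts, and absorbing the cross terms via Cauchy--Schwarz hinges on dominating $|\psi'(u-v)|$ by $\psi(u-v)$ plus a constant. This fails for typical low-energy weights: for $\psi(t)=|t|^p$ with $0<p<1$ one has $\psi'(t)=pt^{p-1}\to\infty$ as $t\to0^+$, so $\psi'(u-v)$ blows up precisely near $\{u=v\}$ and cannot be absorbed. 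You also flag, but do not resolve, the boundary terms of Stokes in the big/prescribed-singularity setting. The paper avoids integration by parts entirely: it writes $\int_X\psi(u-v)\theta^n_u=2\int_0^\infty\psi'(2s)\,\theta^n_u(\{|u-v|>2s\})\,ds$ and controls the sublevel sets via the inclusion $\{u<v-2s\}\subset\{u<w-s\}\cup\{w<\tfrac{u+2v}{3}-\tfrac{s}{3}\}$, the bound $\theta^n_u\le 3^n\theta^n_{(u+2v)/3}$, the comparison principle, and the monotonicity $\psi'(2s)\le\psi'(s)$, yielding the explicit constant $8\cdot3^{n+1}$. This sublevel-set argument (in the spirit of Guedj--Lu--Zeriahi) is what you would need to make your step rigorous.

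For completeness, your step (i) asserts $\CAP(\{|u_k-u_j|>s\})\,\psi(s)\le C\,I_\psi(u_k,u_j)$, but $I_\psi$ only controls $(\theta^n_{u_k}+\theta^n_{u_j})(\{|u_k-u_j|>s\})$; since the $u_k$ are unbounded, the Monge--Amp\`ere mass of a set does not bound its capacity, so this estimate is unjustified. Your step (iii) invokes ``dominated convergence for $\psi(u_k-u)$ against the non-pluripolar Monge--Amp\`ere measures,'' but the measures $\theta^n_{u_k}$ themselves vary with $k$, so no dominated convergence theorem applies; this is exactly the delicate point. The paper instead first proves uniform boundedness from above of a fast-Cauchy subsequence (via compactness of $\mathcal{F}_K$), forms the decreasing envelopes $v_j=(\sup_{k\ge j}u_k)^*$, shows via the Pythagorean identity $I_\psi(u,v)=I_\psi(\max(u,v),u)+I_\psi(\max(u,v),v)$ that $(v_j)$ is a decreasing Cauchy sequence equivalent to $(u_j)$, and then handles the term $\int_X\psi(u_k-u)\theta^n_{u_k}$ for monotone sequences by a separate quasi-continuity/capacity argument together with a tail estimate using an auxiliary weight $\tilde\psi$ with $\psi/\tilde\psi\to0$. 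You would need to supply this monotone-envelope machinery (or an equivalent) to close the argument.
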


Here we say a few words about the prescribed singularity setting. Let $\phi \in \PSH(X,\theta)$ with $\int_{X}\theta^{n}_{\phi} > 0$. We denote by $\PSH(X,\theta,\phi)$ the set of $\theta$-psh functions $u$ that are more singular than $\phi$, meaning $u \leq \phi + C$ for some constant $C$. In particular, $\PSH(X,\theta) = \PSH(X,\theta, V_{\theta})$. The set of relatively full mass potentials is given by
\[
\E(X,\theta,\phi) = \{ u \in \PSH(X,\theta,\phi) : \int_{X} \theta^{n}_{u} = \int_{X} \theta^{n}_{\phi}\}
\]
and the set of relatively finite energy is given by 
\[
\E_{\psi}(X,\theta,\phi) = \left\{ u \in \E(X,\theta, \phi) : \int_{X} \psi(u-\phi) \theta^{n}_{u} < \infty\right\}.
\]
See Section~\ref{subsec: prescribed singularity setting} to learn more about potentials in prescribed singularity setting.

After this we study some properties of the new topology on $\E_{\psi}(X,\theta,\phi)$. The usual topology on $\E_{\psi}(X,\theta,\phi)$ given by $L^{1}(\om^{n})$ is not satisfactory for the purposes of studying Monge-Amp\`ere equation primarily because for $u_{k}, u \in \E_{\psi}(X,\theta,\phi)$ such that $u_{k} \to u$ in $L^{1}(\om^{n})$ does not imply that the non-pluripolar Monge-Amp\`ere measures satisfy $\theta^{n}_{u_{k}} \to \theta^{n}_{u}$. Hence the following result shows that the new topology is stronger and more natural. 

\begin{thm} \label{thm: convergence of measures}
If $u_{k},u \in \E_{\psi}(X,\theta,\phi)$ such that $I_{\psi}(u_{k},u) \to 0$ as $k \to \infty$ then $u_{k} \to u$ in capacity and hence $\theta^{n}_{u_{k}} \to \theta^{n}_{u}$ weakly as measures. In particular, $\int_{X} |u_{k} - u|\om^{n} \to 0$ as $k \to \infty$ as well. 
\end{thm}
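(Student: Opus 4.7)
The plan is to first extract convergence in capacity from the hypothesis $I_{\psi}(u_{k},u) \to 0$, and then to derive both the weak convergence $\theta^{n}_{u_{k}} \to \theta^{n}_{u}$ and the $L^{1}(\om^{n})$-convergence from capacity convergence by invoking existing pluripotential-theoretic results in the prescribed singularity setting. The very first step is a Chebyshev-type inequality: since $\psi$ is a low energy weight with $\psi(0) = 0$ which is positive and monotone away from the origin, for every $\ee > 0$ the value $c(\ee) := \psi(\ee) > 0$ satisfies $\psi(u_{k} - u) \geq c(\ee)$ pointwise on the set $\{|u_{k} - u| > \ee\}$, and integrating against $\theta^{n}_{u_{k}} + \theta^{n}_{u}$ yields
\[
(\theta^{n}_{u_{k}} + \theta^{n}_{u})\bigl(\{|u_{k} - u| > \ee\}\bigr) \leq \frac{I_{\psi}(u_{k},u)}{c(\ee)} \to 0.
\]

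The heart of the argument is to upgrade this Monge-Amp\`ere mass estimate into a capacity estimate. The plan here is to adapt a Ko\l odziej-type comparison inequality to the relative full mass class $\E(X,\theta,\phi)$, of the shape
\[
t^{n}\, \CAP\bigl(\{u < v - s - t\}\bigr) \leq C \int_{\{u < v - s\}} \theta^{n}_{u}
\]
for $u,v \in \E(X,\theta,\phi)$ and $s,t > 0$, with $C$ depending only on $(\theta,\phi)$. Comparison inequalities of this type in the relative full mass setting are by now standard and can be imported from the work of Darvas--Di Nezza--Lu on non-pluripolar products in the big cohomology setting. Applying such an inequality symmetrically to the pairs $(u_{k},u)$ and $(u,u_{k})$ with $s = t = \ee/2$ and combining with the Chebyshev bound above, one gets $\CAP(\{|u_{k} - u| > \ee\}) \to 0$ for every $\ee > 0$, which is exactly convergence in capacity.

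Once capacity convergence is in hand, the remaining two conclusions follow. The weak convergence $\theta^{n}_{u_{k}} \to \theta^{n}_{u}$ is a consequence of the continuity of the non-pluripolar Monge-Amp\`ere operator along capacity-convergent sequences inside $\E(X,\theta,\phi)$, proved by Darvas--Di Nezza--Lu. For $L^{1}(\om^{n})$-convergence, note that capacity dominates $\om^{n}$ up to a multiplicative constant, so capacity convergence implies convergence in $\om^{n}$-measure; combined with uniform $L^{1}(\om^{n})$-control of the sequence $u_{k}$ (inherited from membership in $\PSH(X,\theta,\phi)$, together with the fact that $I_{\psi}(u_{k},u) \to 0$ forces the normalizing constants to stabilize by applying Chebyshev to constant shifts), Vitali's convergence theorem upgrades convergence in measure to $L^{1}(\om^{n})$-convergence. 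I expect the main obstacle to be locating or adapting the precise version of the Ko\l odziej-type capacity inequality in the relative big setting with model singularity $\phi$; the other steps reduce to the assembly of existing pluripotential-theoretic tools.
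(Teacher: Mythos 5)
Your reduction of the theorem to a capacity estimate is where the argument breaks. The Chebyshev step is fine and gives $(\theta^{n}_{u_{k}}+\theta^{n}_{u})(\{|u_{k}-u|>\ee\})\to 0$, but this only says $u_{k}\to u$ in measure with respect to the Monge--Amp\`ere measures of the potentials themselves; these measures may charge very little of the manifold, so a mechanism comparing them to arbitrary competitors is indeed needed. The inequality you propose to import,
\[
t^{n}\,\CAP\bigl(\{u<v-s-t\}\bigr)\le C\int_{\{u<v-s\}}\theta^{n}_{u},
\]
is not a standard result for two arbitrary potentials $u,v\in\E(X,\theta,\phi)$, and with a uniform constant it is false. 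The known Ko\l odziej-type estimates (Guedj--Zeriahi, BEGZ, Darvas--Di Nezza--Lu) require the reference potential $v$ to have minimal singularities in the class (e.g.\ $v=\phi$ or $[v]=[\phi]$): the proof runs the comparison principle against $(1-t)v+tw$ for a normalized competitor $w$, and the inclusion $\{u<(1-t)v+tw-s\}\subset\{u<v-s'\}$ fails wherever $v$ is much more negative than $w$, i.e.\ precisely on the deep sublevel sets of $v$, which are nonempty for a general element of $\E(X,\theta,\phi)$. What the comparison-principle argument actually yields is $t^{n}\CAP(\{u<v-s-t\})\le\int_{\{u<(1-t)v-s\}}\theta^{n}_{u}$, and the enlarged set $\{u<(1-t)v-s\}$ contains (a translate of) $\{v<-s/t\}$, so it is not controlled by your Chebyshev bound. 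Radial examples on $\CP^{1}$ (a potential $v$ with a very flat singularity such as $v\sim-\log\log(-\log|z|)$, and $u$ obtained by gluing a harmonic annulus beneath $v$ so that $\theta^{n}_{u}$ carries almost no mass where $u<v-s$) make the ratio of the two sides of your inequality arbitrarily large. Repairing the strategy would require, beyond the corrected inequality, a uniform-in-$k$ smallness estimate for $\int_{\{u<\phi-M\}}\theta^{n}_{u_{k}}$ as $M\to\infty$ (extractable from Lemma~\ref{lem: integrability} and the quasi-triangle inequality, but this is an argument, not a citation).

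The paper avoids capacity inequalities of this type altogether: it sandwiches a subsequence of $u_{k}$ between monotone sequences $v_{k}\le u_{k}\le w_{k}$, with $w_{k}=(\sup_{j\ge k}u_{j})^{*}\searrow u$ and $v_{k}$ built from the envelopes $P_{\theta}(\min(u_{k},\dots,u_{k+l}))$, using the Pythagorean identity, the quasi-triangle inequality, the stability of $\E_{\psi}$ under $P_{\theta}$, and the domination principle to identify $(\lim_{k}v_{k})^{*}$ with $u$; capacity convergence, weak convergence of the measures, and $L^{1}(\om^{n})$ convergence then all follow from the monotone sandwich. Your final two steps (continuity of the non-pluripolar product along capacity convergence in $\E(X,\theta,\phi)$, and a Vitali-type argument for the $L^{1}$ statement) are consistent with the paper's, but they rest on the capacity convergence you have not established.
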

This result is new  in the K\"ahler case when $\psi(t)$ grows slower than $|t|$. When $\psi(t) = |t|$ and $\phi = V_{\theta}$ the model potential with minimal singularity, in \cite{BBGZ13} the authors show that a closely related functional, 
\[
\tilde{I}(u,v) = \int_{X} (u-v)(\theta^{n}_{u} - \theta^{n}_{V})
\]
satisfy the same properties as in Theorem~\ref{thm: convergence of measures}. Moreover, in \cite{Trusianistrongtopologyofpshfunctions} the author shows the same results using $\tilde{I}$ for $\psi(t) = |t|$, $\theta$ a K\"ahler class and $\phi$ any model prescribed singularity.

At the end we give an application to the K\"ahler-Ricci flow. Guedj-Zeriahi \cite{Guedj2013RegularizingPO} and \cite{dinezzalukahlerricciflow} showed that given a potential $u \in \PSH(X,\om)$  such that $u$ has zero Lelong numbers we have smooth function $u_{t}$ for $t > 0$ such that 
\begin{equation} \label{eq: potential version of kahler ricci flow}
\frac{\dd u_{t}}{\dd t} = \log \left[ \frac{ (\om + dd^{c}u_{t})^{n}}{\om^{n}} \right], u_{t} \to u \text{ in } L^{1}(\om^{n}) \text{ as } t \to 0.
\end{equation}

If $\om_{t}: = \om + dd^{c}u_{t}$, then the above equation is equivalent to
\[
\frac{\dd \om_{t}}{\dd t} = - \text{Ric}(\om_{t}) + \text{Ric}(\om).
\]
Moreover, these functions satisfy $u_{t} \to u$ in capacity as $t\to 0$. In case $u \in \E_{\psi}(X,\om)$ we show a stronger convergence $u_{t} \to u$ as $t \to 0$ in the following theorem. 

\begin{thm} \label{thm: kahler-ricci flow}
If $u \in \E_{\psi}(X,\om)$ and $u_{t}$ satisfy~\eqref{eq: potential version of kahler ricci flow} then $I_{\psi}(u_{t}, u ) \to 0$ and thus by Theorem~\ref{thm: convergence of measures}, we recover that $u_{t} \to u$ in capacity and $\om^{n}_{u_{t}} \to \om^{n}_{u}$ weakly. 
\end{thm}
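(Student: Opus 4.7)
The strategy is to combine the known capacity convergence $u_t \to u$ along the flow with a uniform $\psi$-energy bound for the trajectory $\{u_t\}_{t \in (0,1]}$, and then feed both into the truncation argument that powers Theorem~\ref{thm: convergence of measures}. As input I would record what~\cite{Guedj2013RegularizingPO, dinezzalukahlerricciflow} provide: since $u \in \E_{\psi}(X,\om) \subset \E(X,\om)$ has zero Lelong numbers, the potential $u_t$ solving~\eqref{eq: potential version of kahler ricci flow} is smooth for $t > 0$, converges to $u$ in $L^{1}(\om^{n})$ and in capacity as $t \to 0^+$, and $\sup_{X} u_t$ stays uniformly bounded by the maximum principle applied to the scalar equation.

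The main auxiliary step is the uniform $\psi$-energy bound
\[
\sup_{0 < t \leq 1} \int_{X} \psi(u_t - u)\,\om^{n}_{u_t} < +\infty.
\]
I would attack this by approximation: pick a decreasing sequence of smooth, strictly $\om$-psh functions $u^{k} \searrow u$ (automatically in $\E_{\psi}(X,\om)$), evolve each by~\eqref{eq: potential version of kahler ricci flow} to obtain $u^{k}_{t}$, and use stability of the parabolic complex Monge-Amp\`ere equation from~\cite{Guedj2013RegularizingPO, dinezzalukahlerricciflow} to realize $u_t$ as the monotone limit of $u^{k}_{t}$. For fixed $k$ the convergence $u^{k}_{t} \to u^{k}$ is uniform as $t \to 0^+$, so classical smooth estimates yield a $\psi$-energy bound for $u^{k}_{t}$ with constant depending only on $\sup_{X} |u^{k}|$ and the $\psi$-energy of $u^{k}$, and monotone convergence of non-pluripolar Monge-Amp\`ere measures then transfers the bound to $u_t$.

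With capacity convergence and the uniform $\psi$-energy in hand, I would split
\[
I_{\psi}(u_t, u) = \int_{\{|u_t - u| \leq M\}} \psi(u_t - u)(\om^{n}_{u_t} + \om^{n}_{u}) + \int_{\{|u_t - u| > M\}} \psi(u_t - u)(\om^{n}_{u_t} + \om^{n}_{u}),
\]
observing that for each fixed $M$ the first integral tends to $0$ as $t \to 0^+$ by capacity convergence together with continuity of $\psi$ at $0$ (with $\psi(0) = 0$) and weak convergence of the Monge-Amp\`ere measures, while the second is dominated uniformly in $t$ by the $\psi$-energy bound combined with the sublinearity $\psi(s)/|s| \to 0$, and hence can be made arbitrarily small by choosing $M$ large. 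Letting first $t \to 0^+$ and then $M \to \infty$ yields $I_{\psi}(u_t, u) \to 0$, and the remaining claims $u_t \to u$ in capacity and $\om^{n}_{u_t} \to \om^{n}_{u}$ follow from Theorem~\ref{thm: convergence of measures}.

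The step I expect to demand the most work is the uniform $\psi$-energy bound. Low energy weights are only concave and sublinear, so the convex $E_{p}$-type identities underlying the $\E^{p}$ theory are unavailable, and differentiating $\int_{X} \psi(u_t - u)\,\om^{n}_{u_t}$ in $t$ against~\eqref{eq: potential version of kahler ricci flow} does not yield clean cancellations; the approximation and stability route seems more robust, but requires that the flow's dependence on initial data be strong enough to propagate a $\psi$-energy estimate through the monotone limit in a form compatible with the non-pluripolar Monge-Amp\`ere convergence theory developed in the earlier sections.
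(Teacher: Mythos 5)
Your proposal has two genuine gaps, and it misses the one-sided estimate that makes the paper's argument work. First, the ``main auxiliary step'' --- the uniform bound $\sup_{0<t\leq 1}\int_X\psi(u_t-u)\,\om^n_{u_t}<\infty$ --- is never actually established. You propose smooth approximants $u^k\searrow u$ and ``classical smooth estimates'' giving a bound depending on $\sup_X|u^k|$ and the $\psi$-energy of $u^k$; but $\sup_X|u^k|$ blows up as $k\to\infty$ when $u$ is unbounded, so these constants are not uniform in $k$ and cannot be passed through the monotone limit. Second, even granting the uniform bound, your tail estimate fails: a uniform bound on $\int_X\psi(u_t-u)\,\om^n_{u_t}$ does not make the tails $\int_{\{|u_t-u|>M\}}\psi(u_t-u)\,\om^n_{u_t}$ uniformly small in $t$ as $M\to\infty$ (that is a uniform integrability statement, not a boundedness statement). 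The sublinearity $\psi(s)/|s|\to 0$ would only help if you had a uniform bound on $\int_X|u_t-u|\,\om^n_{u_t}$, i.e.\ an $\E^1$-type bound, which is exactly what is unavailable for a general low-energy weight. The correct device is the auxiliary-weight trick used in Proposition~\ref{convergence with monotone sequences in prescribed singularity setting}: choose $\tilde\psi\geq\psi$ with $\psi/\tilde\psi\to 0$ and $u\in\E_{\tilde\psi}$, and bound the tail by $\frac{\psi(M)}{\tilde\psi(M)}$ times a uniform $\tilde\psi$-energy --- but then you would still need that uniform $\tilde\psi$-energy along the flow, which brings you back to the first gap.

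The idea you are missing is that the regularizing flow satisfies a one-sided barrier: by \cite[Lemma 2.9]{Guedj2013RegularizingPO}, $u\leq u_t+f(t)$ with $f(t)\to 0$ as $t\to 0$. This converts the problem into the situation of Lemma~\ref{lem: l1 convergence implies convergence of measures}: the potentials $u_t+f(t)$ lie \emph{above} $u$ and converge to it in $L^1(\om^n)$, so one can sandwich a subsequence between $u$ and the decreasing envelopes $v_j=(\sup_{k\geq j}u_{t_k})^*\searrow u$ and conclude $\int_X\psi(u_{t_k}-u)\,\om^n_u\to 0$ by monotone convergence, while the comparison principle (integrating over the sublevel sets $\{u_{t_k}>u+s\}$) bounds $\int_X\psi(u_{t_k}-u)\,\om^n_{u_{t_k}}$ by $\int_X\psi(u_{t_k}-u)\,\om^n_u$. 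No uniform energy bound, no truncation in $M$, and no weak convergence of measures is needed as input; the capacity and weak-measure convergence are then \emph{outputs} via Theorem~\ref{thm: convergence of measures}. I recommend restructuring your argument around this barrier inequality rather than trying to prove a uniform $\psi$-energy estimate along the flow.
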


Here we point out that Theorem~\ref{thm: kahler-ricci flow} shows that any non-pluripolar measure can be approximated by measures with smooth density using K\"ahler-Ricci flow.

\subsection*{Organization}
In Section \ref{sec: preliminaries}, we setup the notation and mention all the relevant results required for the theorems we prove. In Section \ref{sec: quasi-metric space}, we show that $I_{\psi}$ is a quasi-metric on $\E_{\psi}(X,\theta,\phi)$. In Section \ref{sec: Completeness}, we show that the induced topology on $\E_{\psi}(X,\theta,\phi)$ is completely metrizable thereby completing the proof of Theorem \ref{thm: complete metric space structure on prescribed singularity case}. In Section \ref{sec: properties of the new topology}, we discuss some relevant properties of the new topology and prove Theorem~\ref{thm: convergence of measures}. In Section \ref{sec: Kahler Ricci flow}, we discuss an application to K\"ahler Ricci flow and prove Theorem~\ref{thm: kahler-ricci flow}.

\subsection*{Acknowledgements }
Special thanks to my thesis advisor Tam\'as Darvas for proposing this problem to me and for his continuous guidance and encouragement. 

I am grateful to  Duc-Viet Vu for pointing out that the methods here work in the general case of prescribed singularity types. I also learned from him that he independently obtained Theorem~\ref{thm : quasi-triangle inequality in the prescribed singularity case} using different methods. 

I thank Chinh H. Lu for explaining arguments from \cite{Guedj2019PlurisubharmonicEA}. 

I thank Antonio Trusiani for pointing me to results in literature which helped in improving the presentation of the paper. I also thank  the anonymous referee for careful reading that helped me improve the exposition of the paper. 

Research partially supported by NSF CAREER grant DMS-1846942.

\section{Preliminaries} \label{sec: preliminaries}

In this section, we fix the notations and give relevant definitions and results. 

We work on a fixed K\"ahler manifold $(X,\om)$. Let $\theta$ be a closed smooth $(1,1)$-form on $X$. An integrable function $u : X \to \R\cup\{-\infty\}$ is a $\theta$-psh if locally $u$ can be written as a sum of a smooth function and a plurisubharmonic function and $\theta + dd^{c}u \geq 0$ in the sense of currents. We denote by $\PSH(X,\theta)$ the set of all $\theta$-psh functions. Let $\al \in H^{1,1}(X,\R)$ be the cohomology class represented by $\theta$. We say $\al$ is big if there exists $\ee > 0$ and $u \in \PSH(X,\theta - \ee\om)$. See \cite{Boucksom2008MongeAmpreEI} to learn more about pluripotential theory on compact K\"ahler manifolds in big cohomology classes. In particular, op. cit. describes how to define the non-pluripolar Monge-Amp\`ere measure $\theta^{n}_{u}$ for any $u \in \PSH(X,\theta)$.

\subsection{Prescribed Singularity setting} \label{subsec: prescribed singularity setting}
In \cite{Darvasmonotonicity} and \cite{darvaslogconcavity} the authors developed the theory of pluripotential theory in prescribed singularity setting. For two potentials $u, v \in \PSH(X,\theta)$, we say that $u$ is \emph{more singular than} $v$ if $u \leq v + C$ for some constant $C$. We denote by $[u] = [v]$ the fact that $u$ and $v$ have the same singularity type. Given a potential $\phi \in \PSH(X,\theta)$, we denote by $\PSH(X,\theta,\phi)$ the set of all potentials $v \in \PSH(X,\theta)$ such that $v$ is more singular than $\phi$. To solve the Monge-Amp\`ere equation with prescribed singularity, Darvas-Di Nezza-Lu \cite{Darvasmonotonicity} defined the space of relatively full mass potentials as 
\[
\E(X,\theta, \phi) := \left\{ u \in \PSH(X,\theta,\phi) : \int_{X} \theta^{n}_{u} = \int_{X}\theta^{n}_{\phi}]\right\}. 
\]
Next, we want to define some subspaces of $\E(X,\theta,\phi)$ which consists of potentials having relatively finite `$\psi$-energy' for some weight $\psi$. By a weight $\psi$, we mean a function $\psi: \R \to \R$ such that $\psi$ is even, continuous, $\psi(0) = 0$, $\psi(\pm \infty) = \infty$, and on $(0,\infty)$ $\psi$ is smooth and increasing. We say $\psi$ is \emph{low energy} if $\psi$ is concave on $(0,\infty)$ and $\psi$ is \emph{high energy} if $\psi$ is convex on $(0,\infty)$. We denote by $\mathcal{W}^{-}$ the set of all low energy weights. For example, the weight $\psi(t) = |t|^{p}$ is high energy for $p \geq 1$ and low energy for $0 < p \leq 1$. For each weight $\psi \in \mathcal{W}^{-}$, we define the space of potentials with finite $\psi$-energy relative to $\phi$ as 
\[
\E_{\psi}(X,\theta,\phi) = \left\{ u \in \E(X,\theta,\phi) : \int_{X} \psi(u-\phi)\theta^{n}_{u} < \infty \right\}.
\]
As in the K\"ahler case, in the prescribed singularity case we also have
\[
\E(X,\theta,\phi) = \bigcup_{\psi \in \mathcal{W}^{-}} \E_{\psi}(X,\theta,\phi).
\]

We list a few results about the spaces $\E_{\psi}(X,\theta,\phi)$ that we use frequently in the rest of the paper.

\begin{lem}[Comparison Principle] \label{lem: comparison principle}
 \emph{\cite[Corollary 3.6]{Darvasmonotonicity}} Let $\phi \in \PSH(X,\theta)$ and $u,v \in \E(X,\theta,\phi)$. Then 
\[
\int_{\{ u< v\}} \theta^{n}_{v} \leq \int_{\{u<v\}} \theta^{n}_{u}.
\]
\end{lem}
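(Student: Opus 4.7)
The plan is to reduce the statement to the comparison principle for potentials with identical singularity type via truncation against $\phi$. For each $k \geq 1$ I would set $u_k := \max(u,\phi-k)$ and $v_k := \max(v,\phi-k)$. Since $u, v \leq \phi + C$ for some constant $C$, one has $\phi - k \leq u_k, v_k \leq \phi + C$, so $[u_k] = [v_k] = [\phi]$. Applied to these equally singular potentials, the classical (same singularity type) comparison principle yields
\[
\int_{\{u_k < v_k\}} \theta^n_{v_k} \leq \int_{\{u_k < v_k\}} \theta^n_{u_k}.
\]

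Next I would exploit the plurifine locality of the non-pluripolar Monge-Amp\`ere measure: on the plurifine open set $\{u > \phi-k\}$ one has $u_k = u$, so $\mathbf{1}_{\{u>\phi-k\}}\theta^n_{u_k} = \mathbf{1}_{\{u>\phi-k\}}\theta^n_u$, and analogously for $v$. A direct case analysis (splitting on whether $u \geq \phi-k$ or $u < \phi-k$) yields the set-theoretic inclusions
\[
\{u<v\}\cap\{v>\phi-k\} \;\subseteq\; \{u_k<v_k\} \;\subseteq\; \{u<v\}.
\]

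Using these, the left-hand side of the comparison inequality is bounded below by $\int_{\{u<v\}\cap\{v>\phi-k\}} \theta^n_v$, which increases to $\int_{\{u<v\}} \theta^n_v$ as $k \to \infty$. For the right-hand side, I would split $\{u_k<v_k\}$ according to whether $u > \phi-k$ or $u \leq \phi-k$: the first piece is bounded by $\int_{\{u<v\}} \theta^n_u$ (by locality and the second inclusion), while the second is at most $\theta^n_{u_k}(\{u \leq \phi-k\})$. A mass-balance identity derived from the relative full mass $\int_X \theta^n_{u_k} = \int_X \theta^n_u = \int_X \theta^n_\phi$ together with the above locality shows that $\theta^n_{u_k}(\{u\leq\phi-k\}) = \theta^n_u(\{u\leq\phi-k\})$, and this vanishes as $k \to \infty$ because $\theta^n_u$ does not charge pluripolar sets while $\bigcap_k\{u\leq\phi-k\}$ is pluripolar. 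The main obstacle I foresee is precisely this tail control outside $\{u>\phi-k\}$ and $\{v>\phi-k\}$, which is where the relative full mass hypothesis $u, v \in \E(X,\theta,\phi)$ enters in an essential way.
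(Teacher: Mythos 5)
The paper offers no proof of this lemma at all — it is quoted verbatim from [DDL, Corollary 3.6] — so there is no internal argument to compare against; I can only assess your proposal on its own terms, and it is correct. The key steps all check out: the identity $\{u_k<v_k\}=\{u<v\}\cap\{v>\phi-k\}$, plurifine locality of $\theta^{n}_{u_k}$ on the plurifine open set $\{u>\phi-k\}$, and the mass balance $\theta^{n}_{u_k}(\{u\le\phi-k\})=\theta^{n}_{u}(\{u\le\phi-k\})\to 0$, which uses both $\int_{X}\theta^{n}_{u_k}=\int_{X}\theta^{n}_{\phi}$ (Witt Nystr\"om/DDL monotonicity, since $[u_k]=[\phi]$) and the hypothesis $\int_{X}\theta^{n}_{u}=\int_{X}\theta^{n}_{\phi}$ — you correctly identify this tail control as the place where $u,v\in\E(X,\theta,\phi)$ enters. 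One caveat you should state explicitly: the "classical" input you invoke is not Bedford--Taylor-classical, since for an arbitrary $\phi$ the comparison principle for potentials of singularity type $[\phi]$ itself rests on the monotonicity theorem (needed to know $\int_{X}\theta^{n}_{\max(u_k,v_k)}=\int_{X}\theta^{n}_{u_k}$); so your argument reduces the general case to the equal-singularity special case of essentially the same result, rather than to something strictly more elementary. The proof in the cited source instead runs the $\max(u,v)$/total-mass argument directly on $u,v\in\E(X,\theta,\phi)$, using $\int_{X}\theta^{n}_{\max(u,v)}=\int_{X}\theta^{n}_{\phi}$ and a shift $v\mapsto v-\ee$ to sharpen $\{u\le v\}$ to $\{u<v\}$, which is shorter; your route buys a clean separation between the relatively bounded case and a tail estimate, at the cost of an extra limiting argument.
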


\begin{lem}[The Fundamental inequality] \label{lem: the fundamental inequality}
\emph{\cite[Proposition 3.3]{thai2021complex}} If $u,v \in \PSH(X,\theta,\phi)$ are such that $u\leq v \leq \phi$ then 
\[
\int_{X} \psi(v-\phi)\theta^{n}_{v} \leq 2^{n+1}\int_{X} \psi(u-\phi)\theta^{n}_{u}.
\]
Thus for any $u \in \E_{\psi}(X,\theta,\phi)$ and $v \in \PSH(X,\theta,\phi)$ such that $u \leq v$ we have $v \in \E_{\psi}(X,\theta,\phi)$. 
\end{lem}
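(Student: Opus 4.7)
My plan is to exploit the layer-cake formula together with the comparison principle (Lemma~\ref{lem: comparison principle}), using the concavity of $\psi$ at the end to absorb the constants. Since $\psi$ is even and $\phi-u,\phi-v \geq 0$, and since $\psi$ is $C^1$ and increasing on $(0,\infty)$,
\[
\int_X \psi(v-\phi)\,\theta_v^n \;=\; \int_0^\infty \psi'(t)\,\theta_v^n\bigl(\{\phi-v>t\}\bigr)\,dt,
\]
and analogously for $u$. The heart of the argument will be a level-set comparison of the form
\[
\theta_v^n\bigl(\{\phi-v>2t\}\bigr) \;\leq\; 2^n\,\theta_u^n\bigl(\{\phi-u>t\}\bigr), \qquad t>0, \quad (\star)
\]
which, once established, combined with the substitution $s=t/2$ and the bound $\psi'(2s)\leq\psi'(s)$ (a consequence of concavity with $\psi(0)=0$), yields the claimed $2^{n+1}$-factor.

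To prove $(\star)$ I would apply Lemma~\ref{lem: comparison principle} to carefully chosen auxiliary $\theta$-psh functions. The natural first candidate is $w = \max\bigl(\tfrac{1}{2}(u+\phi) - t,\,v\bigr)$, which lies in $\PSH(X,\theta,\phi)\cap\E(X,\theta,\phi)$ and coincides with $\tfrac{1}{2}(u+\phi)-t$ on the sub-level set $\{v < \tfrac{1}{2}(u+\phi)-t\}$. On this set $\theta_w^n = \theta_{(u+\phi)/2}^n$ locally, and the multilinear expansion $\theta_{(u+\phi)/2}^n = 2^{-n}(\theta_u+\theta_\phi)^n \geq 2^{-n}\theta_u^n$ supplies the factor $2^n$ in $(\star)$. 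On the complementary portion of $\{\phi-v>2t\}$, where $u$ is closer to $v$, I would instead use $\max(u,v-t)$ and observe that $\theta^n_{\max(u,v-t)} = \theta_v^n$ on $\{u<v-t\}$; the comparison principle then directly gives $\theta_v^n \leq \theta_u^n$ as measures on this portion, so it contributes to $(\star)$ with no factor loss.

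To make all of this rigorous I would first reduce to bounded relative singularities via the canonical cutoffs $u_k = \max(u,\phi-k)$, $v_k=\max(v,\phi-k)$, which preserve the ordering $u_k \leq v_k \leq \phi$ and converge monotonically to $u,v$ with their non-pluripolar Monge-Amp\`ere measures converging as well (by \cite{Darvasmonotonicity}); then pass to the limit $k\to\infty$ using Fatou. The second conclusion of the lemma then follows at once: if $u\in\E_\psi(X,\theta,\phi)$ and $u \leq v \leq \phi+C$, apply the first inequality to $u-C$ and $v-C$ and use that $\psi$ is bounded on bounded intervals to absorb the constant. The main obstacle will be making $(\star)$ precise: verifying that the auxiliary potentials really stay in $\E(X,\theta,\phi)$, that the splitting of $\{\phi-v>2t\}$ into the two regions is clean with no loss at the boundary, and that in each regime the correct local identification of Monge-Amp\`ere measures can be justified by a locality argument in the prescribed-singularity setting.
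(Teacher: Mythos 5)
Your overall skeleton (layer--cake formula, a level--set inequality of the form $(\star)$, and then $\psi'(2s)\le\psi'(s)$ from concavity) is exactly the paper's strategy, and $(\star)$, namely $\theta^{n}_{v}(\{v<\phi-2t\})\le 2^{n}\theta^{n}_{u}(\{u<\phi-t\})$, is precisely the inequality the paper establishes. However, the mechanism you propose for proving $(\star)$ does not work. On your ``Region A'' the inequalities run in the wrong direction: the multilinear bound $\theta^{n}_{(u+\phi)/2}\ge 2^{-n}\theta^{n}_{u}$ is a \emph{lower} bound on $\theta^{n}_{(u+\phi)/2}$, and the comparison principle applied to the pair $v\le w=\max\bigl(\tfrac12(u+\phi)-t,\,v\bigr)$ gives $\int_{\{v<w\}}\theta^{n}_{w}\le\int_{\{v<w\}}\theta^{n}_{v}$; chaining these yields $\theta^{n}_{u}(\{v<\tfrac12(u+\phi)-t\})\le 2^{n}\theta^{n}_{v}(\{v<\tfrac12(u+\phi)-t\})$, which is the \emph{reverse} of what $(\star)$ requires (an upper bound on $\theta^{n}_{v}$ by $\theta^{n}_{u}$). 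There is no way to salvage this by locality alone, since $\theta^{n}_{(u+\phi)/2}$ is not bounded above by a constant times $\theta^{n}_{u}$ (the mixed terms involve $\theta_{\phi}$). In addition, the two regions you describe do not cover $\{v<\phi-2t\}$: with $\phi=0$, $t=1$, $v=-2.5$, $u=-3.2$ one has $v<\phi-2t$, yet $v\ge\tfrac12(u+\phi)-t=-2.6$ and $u\ge v-t=-3.5$, so the point lies in neither region.

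The fix is to average $v$ (the less singular potential, whose measure you are bounding from above) with $\phi$, rather than $u$. From $u\le v\le\phi$ one has the inclusions $\{v<\phi-2t\}\subset\{u<\tfrac{v+\phi}{2}-t\}\subset\{u<\phi-t\}$; multilinearity of the non-pluripolar product gives the \emph{upper} bound $\theta^{n}_{v}\le 2^{n}\theta^{n}_{(v+\phi)/2}$ as measures; and the comparison principle applied to the pair $u$ versus $\tfrac{v+\phi}{2}-t$ (here $u$ is the smaller function, so the inequality points the right way) gives $\theta^{n}_{(v+\phi)/2}\bigl(\{u<\tfrac{v+\phi}{2}-t\}\bigr)\le\theta^{n}_{u}\bigl(\{u<\tfrac{v+\phi}{2}-t\}\bigr)\le\theta^{n}_{u}(\{u<\phi-t\})$. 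This proves $(\star)$ in one stroke, with no splitting of the level set; your ``Region B'' step (which is correct in isolation) and the truncation/Fatou scaffolding become unnecessary. Your derivation of the second conclusion from the first by translating by $C$ and using $\psi(a+b)\le\psi(a)+\psi(b)$ is fine.
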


\begin{proof}
We give a simplified proof based on the argument of \cite[Lemma 2.4]{darvaslogconcavity}. 

\begin{align*}
    \int_{X}\psi(v-\phi) \theta^{n}_{v} &= \int_{0}^{\infty} \psi'(t)\theta^{n}_{v}(v < \phi - t)dt \\
    &= 2\int_{0}^{\infty}\psi'(2t) \theta^{n}_{v}(v < \phi - 2t) dt.
    \intertext{Now observe that $\{ v < \phi - 2t\} \subset \{ u < \frac{v+\phi}{2} - t\} \subset \{u < \phi - t\}$, thus using Lemma~\ref{lem: comparison principle} and the fact that $\theta^{n}_{v} \leq 2^{n} \theta^{n}_{(v+\phi)/2}$}
    &\leq 2\int_{0}^{\infty} \psi'(2t) \theta^{n}_{v} \left( u < \frac{v+\phi}{2} - t\right) dt \\
    &\leq 2^{n+1} \int_{0}^{\infty} \psi'(2t) \theta^{n}_{(v+\phi)/2} \left( u < \frac{v+\phi}{2} - t\right) dt \\
    &\leq 2^{n+1} \int_{0}^{\infty} \psi'(2t) \theta^{n}_{u} (u < \phi - t) dt.
    \intertext{Since $\psi$ is concave, we get $\psi'(2t) \leq \psi'(t)$.}
    &\leq 2^{n+1}\int_{0}^{\infty} \psi'(t) \theta^{n}_{u}(u<\phi - t)dt \\
    &= 2^{n+1} \int_{X} \psi(u-\phi) \theta^{n}_{u}.
\end{align*}
\end{proof}

\begin{lem}[Integrability] \label{lem: integrability}
Given $u,v \in \E_{\psi}(X,\theta, \phi)$ we have
\[
\int_{X}\psi(u-\phi)\theta^{n}_{v} < +\infty.
\]
In particular, if $u,v \leq \phi$, then 
\[
\int_{X}\psi(u-\phi)\theta^{n}_{v} \leq 2\int_{X}\psi(u-\phi)\theta^{n}_{u} + 2\int_{X}\psi(v-\phi)\theta^{n}_{v}.
\]
\end{lem}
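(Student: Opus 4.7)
The plan is to first prove the ``In particular'' inequality, assuming $u,v \leq \phi$, and then deduce the finiteness statement in the general case by shifting $u$ and $v$ by a constant. For the main inequality, I would use a layer cake decomposition, writing
\[
\int_{X} \psi(u-\phi)\theta^{n}_{v} = \int_{0}^{\infty} \psi'(t)\,\theta^{n}_{v}(\{u < \phi - t\})\,dt,
\]
which is valid since $\psi$ is even, increasing on $[0,\infty)$, and $u - \phi \leq 0$. The core geometric observation is the set inclusion
\[
\{u < \phi - t\} \subseteq \{v < \phi - t/2\} \cup \{u < v - t/2\},
\]
obtained by noting that if $u < \phi - t$ and $v \geq \phi - t/2$, then $u < \phi - t \leq v - t/2$.

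On the set $\{v < \phi - t/2\}$ the bound is immediate. For the second piece, I would use that $\theta^{n}_{v - t/2} = \theta^{n}_{v}$ together with the comparison principle of Lemma~\ref{lem: comparison principle} applied to $u$ and $v - t/2$ to obtain
\[
\theta^{n}_{v}(\{u < v - t/2\}) \leq \theta^{n}_{u}(\{u < v - t/2\}) \leq \theta^{n}_{u}(\{u < \phi - t/2\}),
\]
where the last inequality uses $v \leq \phi$. Substituting the resulting pointwise estimate for $\theta^{n}_{v}(\{u<\phi-t\})$ back into the layer cake, changing variables $s = t/2$, and exploiting that concavity of $\psi$ on $[0,\infty)$ gives $\psi'(2s) \leq \psi'(s)$, I can reassemble each factor via layer cake once more to arrive at the stated bound with constant $2$.

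For the general finiteness claim, I would pick $C \geq 0$ with $u,v \leq \phi + C$ (possible because $u,v \in \PSH(X,\theta,\phi)$) and set $\tilde u := u - C$, $\tilde v := v - C$, so that $\tilde u, \tilde v \leq \phi$ and the non-pluripolar Monge-Amp\`ere measures are unchanged. Concavity of $\psi$ on $[0,\infty)$ with $\psi(0)=0$ makes $\psi$ subadditive there, which together with evenness yields $\psi(s \pm C) \leq \psi(s) + \psi(C)$ for all $s \in \R$. This gives $\tilde u, \tilde v \in \E_{\psi}(X,\theta,\phi)$ and converts the already-established quantitative bound for $\tilde u, \tilde v$ into a finite upper bound for $\int_{X}\psi(u-\phi)\theta^{n}_{v}$, modulo an additive term $\psi(C)\int_{X}\theta^{n}_{v}$ that is finite because $v \in \E(X,\theta,\phi)$.

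The main technical obstacle is the set-theoretic decomposition and the careful use of the comparison principle for the translated potential $v - t/2$; once the level-set inequality is in place, the remainder is a routine layer-cake computation exploiting only the concavity of $\psi$.
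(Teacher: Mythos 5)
Your proposal is correct and follows essentially the same route as the paper's proof: the same layer-cake decomposition, the same level-set inclusion $\{u<\phi-2t\}\subset\{u<v-t\}\cup\{v<\phi-t\}$ (yours is the $t\mapsto t/2$ reparametrization of it), the same application of the comparison principle to $u$ and the shifted potential, the concavity bound $\psi'(2s)\leq\psi'(s)$ yielding the constant $2$, and the same reduction of the general case via translation by $C$ and subadditivity of $\psi$. No gaps.
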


\begin{proof}
The proof again generalizes the proof of \cite[Lemma 2.5]{darvaslogconcavity} and \cite[Proposition 2.5]{GZweightedmongeampereenergy}.
\begin{align*}
    \int_{X} \psi(u - \phi) \theta^{n}_{v} &= \int_{0}^{\infty} \psi'(t) \theta^{n}_{v} \{t < \phi - u\} dt \\
    &= 2 \int_{X}\psi'(2t) \theta^{n}_{v} \{ 2t < \phi - u\} dt \\
     &\leq 2\int_{X} \psi'(t) \theta^{n}_{v} \{u < \phi - 2t\} dt. 
      \intertext{ Notice that $\{ u < \phi - 2t\} \subset \{ u < -t + v\} \cup \{ v< \phi - t\}$. This gives }
    \theta^{n}_{v}\{ u < \phi - 2t\} &\leq \theta^{n}_{v} \{ u < -t + v\} + \theta^{n}_{v}\{ v < \phi - t\}
    \intertext{and Lemma~\ref{lem: comparison principle} gives }
    &\leq \theta^{n}_{u}\{u<-t+v\} + \theta^{n}_{v}\{ v <\phi - t\}. 
    \intertext{Notice that $\{ u < -t + v\} = \{ u - \phi < -t + v- \phi\}$. Since $v - \phi \leq 0$, we get $\{u < -t + v\} \subset \{u - \phi < -t\}.$ This gives us}
    &\leq \theta^{n}_{u}\{u-\phi < -t\} + \theta^{n}_{v}\{ v-\phi < -t\}. 
    \intertext{Using this we get}
    &\leq 2 \int_{X}\psi'(t) (\theta^{n}_{u} \{ u - \phi < - t\} + \theta^{n}_{v}\{ v - \phi < -t\}) \\
    &= 2\int_{X} \psi(u-\phi) \theta^{n}_{u} + 2\int_{X}\psi(v-\phi)\theta^{n}_{v}. 
\end{align*}

Now assume $u,v \in \E_{\psi}(X,\theta,\phi)$ then for some $C$, we have $u,v \leq \phi + C$. Let $\tilde{u} = u - C$ and $\tilde{v} = v - C$, so $\tilde{u},\tilde{v} \leq \phi$. Then, 
\begin{align*}
    \int_{X} \psi(u - \phi) \theta^{n}_{v} &= \int_{X} \psi(\tilde{u} - \phi + C) \theta^{n}_{\tilde{v}} \\
    &\leq \int_{X} \psi(\tilde{u} - \phi)\theta^{n}_{\tilde{v}} + \int_{X} \psi(C) \theta^{n}_{\tilde{v}} < \infty
\end{align*}
\end{proof}

\begin{cor} \label{cor: I psi is finite}
The proposed quasi-metric  $I_{\psi}$ (see Equation~\eqref{eq: proposed quasi-metric}) on $\E_{\psi}(X,\theta,\phi)$ is always finite.
\end{cor}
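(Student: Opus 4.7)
The plan is to dominate $\psi(u-v)$ pointwise by a sum of the more familiar expressions $\psi(u-\phi)$ and $\psi(v-\phi)$, and then estimate each of the resulting four integrals using the hypothesis that $u,v\in\E_\psi(X,\theta,\phi)$ together with the Integrability Lemma.

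For the pointwise bound, I rely on two properties of the low-energy weight: $\psi$ is even and $\psi$ is concave on $[0,\infty)$ with $\psi(0)=0$. The second property implies subadditivity: for any $s,t\geq 0$, concavity with $\psi(0)=0$ gives $\psi(s)\geq \frac{s}{s+t}\psi(s+t)$ and $\psi(t)\geq \frac{t}{s+t}\psi(s+t)$, hence $\psi(s+t)\leq \psi(s)+\psi(t)$. Combined with the triangle inequality $|u-v|\leq|u-\phi|+|v-\phi|$ and the fact that $\psi$ is even and increasing on $[0,\infty)$, this yields the pointwise estimate
\[
\psi(u-v)\leq \psi(u-\phi)+\psi(v-\phi).
\]

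Plugging this into $I_\psi(u,v)$ and distributing over the sum of measures reduces the problem to showing that each of the four integrals
\[
\int_X\psi(u-\phi)\theta_u^n,\quad \int_X\psi(v-\phi)\theta_v^n,\quad \int_X\psi(u-\phi)\theta_v^n,\quad \int_X\psi(v-\phi)\theta_u^n
\]
is finite. The first two are finite by the very definition of $\E_\psi(X,\theta,\phi)$, and the last two are finite by Lemma~\ref{lem: integrability} applied once in each order to the pair $u,v$.

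There is essentially no main obstacle; the only mild point to watch is that the Integrability Lemma and the pointwise estimate use different normalizations ($u,v\leq\phi$ vs.\ $u,v\leq\phi+C$), but this is harmless since a constant shift leaves the measures $\theta_u^n,\theta_v^n$ unchanged and only alters $\psi(u-\phi),\psi(v-\phi)$ by uniformly bounded amounts, which integrate against finite mass measures to finite quantities. This gives $I_\psi(u,v)<\infty$.
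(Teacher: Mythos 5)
Your proposal is correct and follows essentially the same route as the paper: bound $\psi(u-v)$ pointwise by $\psi(u-\phi)+\psi(v-\phi)$ using subadditivity of the concave even weight (the paper cites this as \cite[Lemma 2.6]{darvas2021mabuchi}, while you derive it directly), then conclude via the definition of $\E_{\psi}(X,\theta,\phi)$ and Lemma~\ref{lem: integrability}. Your remark about normalization is fine but not even needed, since the first assertion of Lemma~\ref{lem: integrability} already covers general $u,v\in\E_{\psi}(X,\theta,\phi)$ without assuming $u,v\leq\phi$.
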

\begin{proof}
Let $u,v \in \E_{\psi}(X,\theta,\phi)$. Then 
\begin{align*}
    I_{\psi}(u,v) &= \int_{X}\psi(u-v)(\theta^{n}_{u} + \theta^{n}_{v}) \\
    & \leq \int_{X} (\psi(u-\phi) + \psi(v - \phi) ) (\theta^{n}_{u} + \theta^{n}_{v}).
\end{align*}
Here we used \cite[Lemma 2.6]{darvas2021mabuchi} which states that for any $a,b \in \R$, and any $\psi \in \mathcal{W}^{-}$, we have $\psi(a+b) \leq \psi(a) + \psi(b)$. By Lemma~\ref{lem: integrability} all the terms in the expression above are finite.
\end{proof}

\begin{lem}[Domination Principle]\label{lem: domination principle} \emph{\cite[Proposition 3.11]{Darvasmonotonicity}}. Let $\phi \in \PSH(X,\theta)$ such that $\int_{X} \theta^{n}_{\phi} > 0$. Then for $u,v \in \E(X,\theta,\phi)$, if $\theta^{n}_{u} ( \{ u< v\}) = 0$, then $u \geq v$. 
\end{lem}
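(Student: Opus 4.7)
The plan is to combine the comparison principle (Lemma~\ref{lem: comparison principle}) applied to the maximum $w := \max(u,v)$ with plurifine locality of the non-pluripolar Monge--Amp\`ere measure, and then leverage the assumption $\int_X \theta^n_\phi > 0$ to upgrade the resulting mass-theoretic equalities into a pointwise inequality.

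First I would verify that $w \in \E(X,\theta,\phi)$: since $u,v \leq \phi + C$ we have $w \leq \phi + C$ as well, so $w \in \PSH(X,\theta,\phi)$; combined with $w \geq u$ (hence $\int_X \theta^n_w \geq \int_X \theta^n_u = \int_X \theta^n_\phi$ by monotonicity of total non-pluripolar mass) and $\int_X \theta^n_w \leq \int_X \theta^n_\phi$, this places $w$ in $\E(X,\theta,\phi)$. Next, noting $\{u < w\} = \{u < v\}$, the comparison principle gives
\[
\int_{\{u<v\}}\theta^n_w \;\leq\; \int_{\{u<v\}}\theta^n_u \;=\; 0.
\]
Since $\{u<v\} = \bigcup_{k} \{u < v - 1/k\}$ is a plurifine-open set, plurifine locality of the non-pluripolar product (applied on each plurifine-open piece $\{u < v - 1/k\}$, where $w$ agrees with $v$) yields $\mathbf{1}_{\{u<v\}}\theta^n_w = \mathbf{1}_{\{u<v\}}\theta^n_v$, so $\theta^n_v(\{u<v\}) = 0$ as well. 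In particular, $\int_X \theta^n_\phi = \int_X \theta^n_u = \int_X \theta^n_v = \int_X \theta^n_w$, and all four measures sit on $\{u \geq v\}$ (together with the plurifine-open complement $\{u > v\}$ where $\theta^n_w = \theta^n_u$).

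The main obstacle is then the final step: passing from ``$\theta^n_u$ and $\theta^n_v$ put no mass on $\{u<v\}$'' to the pointwise inequality $u \geq v$. The standard route is to perturb: for each $\epsilon > 0$ the same argument applied to $v-\epsilon$ gives $\theta^n_v(\{u < v - \epsilon\}) = 0$, so one may argue by contradiction that if $u < v$ on a set of positive capacity, then there exists $\epsilon > 0$ with $\{u < v - \epsilon\}$ of positive capacity. The hypothesis $\int_X \theta^n_\phi > 0$ should now be used to produce a bounded auxiliary $\theta$-psh perturbation (for example by testing against $w - \delta\rho$ for a bounded $\theta$-psh $\rho$ with appropriate sign) and re-apply the comparison principle to reach a contradiction with the mass conservation $\int_X \theta^n_w = \int_X \theta^n_\phi$. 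Alternatively, one may invoke the uniqueness of Monge--Amp\`ere solutions in $\E(X,\theta,\phi)$: since $w \geq u$ with equality in total non-pluripolar mass and $\theta^n_w = \theta^n_u$ on $\{u > v\}$, the model-potential hypothesis should force $w = u$, i.e.\ $v \leq u$ everywhere. The delicate point that I expect to consume most of the work is handling the set $\{u = v\}$, which is not plurifine-open and where neither plurifine locality nor the comparison principle can directly be applied; this is exactly where the assumption that $\phi$ has positive total mass becomes indispensable.
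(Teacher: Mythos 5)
The paper does not prove this lemma at all; it simply cites \cite[Proposition 3.11]{Darvasmonotonicity}, so your attempt has to be judged on its own. Your preliminary steps are fine: $w=\max(u,v)\in\E(X,\theta,\phi)$, the comparison principle on the plurifine-open set $\{u<w\}=\{u<v\}$, and plurifine locality giving $\theta^{n}_{v}(\{u<v\})=0$ are all correct and are indeed part of the standard argument. But the entire difficulty of the domination principle lies in the last step, and there you only list candidate strategies without executing one, and both of the routes you sketch are problematic. The perturbation $w-\delta\rho$ is not $\theta$-psh (you are subtracting a psh function), so it cannot be fed into the comparison principle. The alternative of invoking uniqueness of Monge--Amp\`ere solutions is circular: in \cite{darvaslogconcavity} uniqueness in $\E(X,\theta,\phi)$ is itself established using the domination principle (and requires $\phi$ to be a model potential, which is not assumed here). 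Note also that your observation ``$w\geq u$, equal masses, $\theta^{n}_{w}=\theta^{n}_{u}$ on $\{u>v\}$, hence $w=u$'' is literally the statement $\theta^{n}_{u}(\{u<w\})=0\Rightarrow u\geq w$, i.e.\ the lemma you are trying to prove.

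The missing idea is the auxiliary potential produced by \emph{solving} a Monge--Amp\`ere equation: since $\int_{X}\theta^{n}_{\phi}>0$, by \cite[Theorem 4.23/5.6]{Darvasmonotonicity} there exists $\rho\in\E(X,\theta,\phi)$ with $\theta^{n}_{\rho}=c\,\om^{n}$ for $c=\int_{X}\theta^{n}_{\phi}/\int_{X}\om^{n}$, normalized so that $\rho\leq v$. For $t\in(0,1)$ one has $\{u<(1-t)v+t\rho\}\subset\{u<v\}$, and $(1-t)v+t\rho$ is a genuine element of $\E(X,\theta,\phi)$ by convexity, so the comparison principle together with multilinearity of the non-pluripolar product gives
\[
t^{n}c\,\om^{n}\bigl(\{u<(1-t)v+t\rho\}\bigr)\leq \int_{\{u<(1-t)v+t\rho\}}\theta^{n}_{(1-t)v+t\rho}\leq \int_{\{u<(1-t)v+t\rho\}}\theta^{n}_{u}=0.
\]
Hence $u\geq(1-t)v+t\rho$ almost everywhere with respect to $\om^{n}$, therefore everywhere (both sides are quasi-psh), and letting $t\to 0$ yields $u\geq v$ off the pluripolar set $\{\rho=-\infty\}$, hence everywhere. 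This is exactly where the hypothesis $\int_{X}\theta^{n}_{\phi}>0$ enters --- not, as you suggest, in handling the set $\{u=v\}$, but in guaranteeing an auxiliary measure in the class that charges every non-pluripolar Borel set. Without this construction your argument stops at ``$u\geq v$ holds $\theta^{n}_{u}$- and $\theta^{n}_{v}$-a.e.,'' which is strictly weaker than the claim.
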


We also need the following slight generalization of \cite[Theorem 2.2]{darvas2019metric}, removing the assumption of uniform boundedness of $\chi_{k}$. 
\begin{lem} \label{thm: removing uniform boundedness assumption}
Let $\theta^{j}$, $j \in \{1,\dots, n\}$ be smooth closed $(1,1)$-forms on $X$ whose cohomology classes are big. Let $u_{j}, u_{j}^{k} \in \PSH(X,\theta^{j})$ are such that $u_{j}^{k}\to u_{j}$ in capacity as $k \to \infty$. Let $\chi_{k},\chi \geq 0$ be quasi-continuous functions such that $\chi_{k} \to \chi$ in capacity as $k \to \infty$. Then 
\[
\int_{X} \chi \theta^{1}_{u_{1}} \wedge \theta^{2}_{u_{2}} \wedge \dots \wedge \theta^{n}_{u_{n}} \leq \liminf_{k\to \infty} \int_{X} \chi_{k} \theta^{1}_{u_{1}^{k}} \wedge \theta^{2}_{u_{2}^{k}} \wedge \dots \wedge \theta^{n}_{u_{n}^{k}}.
\]
\end{lem}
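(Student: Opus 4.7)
The plan is to reduce the statement to the uniformly bounded case \cite[Theorem 2.2]{darvas2019metric} by a standard truncation argument, then pass to the limit using monotone convergence. For $M>0$, set $\chi^{M} := \min(\chi,M)$ and $\chi_{k}^{M} := \min(\chi_{k},M)$. These functions are nonnegative and bounded by $M$. Since $\chi$ and $\chi_{k}$ are quasi-continuous and $t\mapsto \min(t,M)$ is continuous, $\chi^{M}$ and $\chi_{k}^{M}$ are quasi-continuous. Moreover, $|\chi_{k}^{M}-\chi^{M}|\leq |\chi_{k}-\chi|$ pointwise, so $\chi_{k}^{M}\to \chi^{M}$ in capacity as $k\to\infty$.

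Next I would apply \cite[Theorem 2.2]{darvas2019metric} to the uniformly bounded sequence $\chi_{k}^{M}$ and the given potentials $u_{j}^{k}\to u_{j}$ in capacity, obtaining
\[
\int_{X} \chi^{M}\, \theta^{1}_{u_{1}}\wedge \cdots \wedge \theta^{n}_{u_{n}} \;\leq\; \liminf_{k\to\infty} \int_{X} \chi_{k}^{M}\, \theta^{1}_{u_{1}^{k}}\wedge \cdots \wedge \theta^{n}_{u_{n}^{k}}.
\]
Since $0\leq \chi_{k}^{M}\leq \chi_{k}$ and the non-pluripolar Monge-Amp\`ere measures are nonnegative, the right hand side is bounded above by $\liminf_{k\to\infty}\int_{X} \chi_{k}\, \theta^{1}_{u_{1}^{k}}\wedge \cdots \wedge \theta^{n}_{u_{n}^{k}}$, giving
\[
\int_{X} \chi^{M}\, \theta^{1}_{u_{1}}\wedge \cdots \wedge \theta^{n}_{u_{n}} \;\leq\; \liminf_{k\to\infty} \int_{X} \chi_{k}\, \theta^{1}_{u_{1}^{k}}\wedge \cdots \wedge \theta^{n}_{u_{n}^{k}}.
\]

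Finally I would let $M\to\infty$. The sequence $\chi^{M}$ increases pointwise to $\chi$, so by the monotone convergence theorem applied to the finite positive Borel measure $\theta^{1}_{u_{1}}\wedge \cdots \wedge \theta^{n}_{u_{n}}$ (finiteness holds since the cohomology classes are big), the left hand side converges to $\int_{X} \chi\, \theta^{1}_{u_{1}}\wedge \cdots \wedge \theta^{n}_{u_{n}}$, yielding the claimed inequality. There is no real obstacle here: the only point to be careful about is verifying that truncation by $\min(\cdot,M)$ preserves both quasi-continuity and convergence in capacity, which follows from the Lipschitz continuity of $t\mapsto \min(t,M)$.
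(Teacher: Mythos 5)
Your proof is correct and follows exactly the same route as the paper's: truncate at level $M$ (the paper writes $C$), apply the bounded version from \cite[Theorem 2.2]{darvas2019metric}, use $\chi_k^M \le \chi_k$ to bound the right-hand side, and conclude by monotone convergence as $M \to \infty$. Your remarks on why truncation preserves quasi-continuity and convergence in capacity are a welcome extra detail the paper leaves implicit.
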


\begin{proof}
Consider $\chi_{k,C} = \min(\chi_{k}, C)$. Then $\chi_{k,C}$ are uniformly bounded and quasi-continuous and $\chi_{k,C} \to \chi_{C}$ in capacity. Therefore, using \cite[Theorem 2.2]{darvas2019metric}
\[
\int_{X} \chi_{C} \theta^{1}_{u_{1}} \wedge \dots \wedge \theta^{n}_{u_{n}} \leq \liminf_{k\to \infty} \int_{X} \chi_{k,C} \theta^{1}_{u_{1}^{k}} \wedge \dots \wedge \theta^{n}_{u_{n}^{j}}  
\]
Since $\chi_{k,C} \leq \chi_{k}$, we have 
\[
 \int_{X} \chi_{C} \theta^{1}_{u_{1}} \wedge \dots \wedge \theta^{n}_{u_{n}} \leq \liminf_{k \to \infty} \int_{X} \chi_{k,C} \theta^{1}_{u_{1}^{k}} \wedge \dots \wedge \theta^{n}_{u_{n}^{k}} \leq \liminf_{k \to \infty} \int_{X} \chi_{k} \theta^{1}_{u_{1}^{k}} \wedge \dots \wedge \theta^{n}_{u_{n}^{k}} 
\]
As $\chi_{C} \nearrow \chi$ as $C \to \infty$ and applying monotone convergence theorem, we get 
\[
\int_{X} \chi \theta^{1}_{u_{1}} \wedge \dots \wedge \theta^{n}_{u_{n}} \leq \liminf_{k \to \infty} \int_{X} \chi_{k} \theta^{1}_{u_{1}^{k}} \wedge \dots \wedge \theta^{n}_{u_{n}^{k}} .
\]
\end{proof}

The following result is mentioned in \cite[Corollary 3.4]{thai2021complex}. We present a proof here for completeness. 
\begin{lem}\label{lem: stability} 
Let $\phi \in \PSH(X,\theta)$ such that $\int_{X}\theta^{n}_{\phi} > 0$. Let $u_{j} \in \E_{\psi}(X,\theta,\phi)$ such that 
\[
\sup_{j}\int_{X}\psi(u_{j}  - \phi)\theta^{n}_{u_{j}} < \infty.
\]
If $u_{j} \to u$ in $L^{1}(\om^{n})$ for some $u \in \PSH(X,\theta)$, then $u \in \E_{\psi}(X,\theta,\phi)$. 
\end{lem}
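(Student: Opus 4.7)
I would combine the fundamental inequality (Lemma~\ref{lem: the fundamental inequality}) with the capacity semicontinuity result (Lemma~\ref{thm: removing uniform boundedness assumption}) via canonical truncations relative to $\phi$. After a preliminary normalization---$L^{1}$ convergence together with Hartogs' lemma forces $\sup_{X} u_{j}$ to be uniformly bounded, so one may subtract a uniform constant and assume $u_{j} \leq \phi$ for all $j$---upper semicontinuity of $L^{1}$-limits of $\theta$-psh functions gives $u \leq \phi$, hence $u \in \PSH(X,\theta,\phi)$.

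Next I introduce the canonical truncations
\[
u_{j}^{k} := \max(u_{j}, \phi - k), \qquad u^{k} := \max(u, \phi - k),
\]
which all lie in $\E(X,\theta,\phi)$ since they are more singular than $\phi$ but bounded below by $\phi - k$. Since $u_{j} \leq u_{j}^{k} \leq \phi$, Lemma~\ref{lem: the fundamental inequality} yields the uniform bound
\[
\int_{X} \psi(u_{j}^{k} - \phi)\, \theta^{n}_{u_{j}^{k}} \leq 2^{n+1} M, \qquad M := \sup_{j} \int_{X} \psi(u_{j} - \phi)\, \theta^{n}_{u_{j}}.
\]
For each fixed $k$, the family $\{u_{j}^{k}\}_{j}$ is uniformly bounded, so (up to a subsequence) $L^{1}$-convergence $u_{j} \to u$ upgrades to $u_{j}^{k} \to u^{k}$ in capacity via Bedford--Taylor theory; by continuity of $\psi$, also $\psi(u_{j}^{k} - \phi) \to \psi(u^{k} - \phi)$ in capacity. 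Applying Lemma~\ref{thm: removing uniform boundedness assumption} gives $\int_{X} \psi(u^{k} - \phi)\, \theta^{n}_{u^{k}} \leq 2^{n+1} M$ uniformly in $k$.

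To pass $k \to \infty$, the key tail estimate uses that $\psi(u^{k} - \phi) = \psi(k)$ on $\{u^{k} = \phi - k\}$, giving
\[
\psi(k)\, \theta^{n}_{u^{k}}\bigl(\{u \leq \phi - k\}\bigr) \leq 2^{n+1} M,
\]
which forces $\theta^{n}_{u^{k}}(\{u \leq \phi - k\}) \to 0$ as $k \to \infty$ (since $\psi(k) \to \infty$). Combined with the plurifine locality identity $\mathbf{1}_{\{u > \phi - k\}} \theta^{n}_{u^{k}} = \mathbf{1}_{\{u > \phi - k\}} \theta^{n}_{u}$ and the relative full-mass $\int_{X} \theta^{n}_{u^{k}} = \int_{X} \theta^{n}_{\phi}$, this produces $\int_{X} \theta^{n}_{u} = \int_{X} \theta^{n}_{\phi}$, hence $u \in \E(X,\theta,\phi)$. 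Since $u^{k} \searrow u$ makes $\psi(u^{k} - \phi) \nearrow \psi(u - \phi)$, a Fatou-type argument on the converging part of the measures then yields $\int_{X} \psi(u - \phi)\, \theta^{n}_{u} \leq 2^{n+1} M < \infty$, so $u \in \E_{\psi}(X,\theta,\phi)$.

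The main obstacle I expect is the final $k \to \infty$ passage: the non-pluripolar measures $\theta^{n}_{u^{k}}$ do not converge to $\theta^{n}_{u}$ globally, only on the sets $\{u > \phi - k\}$ via plurifine locality. One must therefore simultaneously control the escaping mass and the escaping energy, which is exactly where the unboundedness of $\psi$ at infinity enters the argument via the tail estimate above.
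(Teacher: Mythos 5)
There is one genuine gap, and it sits exactly at the step the paper's proof is engineered to avoid: your claim that, for fixed $k$, uniform boundedness of the truncations $u_j^k$ together with $L^1$-convergence $u_j\to u$ ``upgrades (up to a subsequence) to $u_j^k\to u^k$ in capacity via Bedford--Taylor theory.'' This is not a valid implication. For uniformly bounded (quasi-)psh functions, $L^1$-convergence does not imply convergence in capacity, not even after passing to a subsequence: the standard local example $u_j=\max(j\log|z|,-1)$ is uniformly bounded on the unit ball and converges to $-1$ in $L^1_{loc}$, yet $\mathrm{Cap}(\{u_j>-1/2\})=\mathrm{Cap}(\{e^{-1/(2j)}<|z|<1\})$ stays bounded away from zero, so no subsequence converges in capacity (equivalently, capacity convergence of bounded potentials would force convergence of the Monge--Amp\`ere measures, which fails here). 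Hartogs' lemma only gives the one-sided statement $(\limsup_j u_j)^*=u$; your application of Lemma~\ref{thm: removing uniform boundedness assumption} needs genuine two-sided capacity convergence of both $u_j^k$ and $\psi(u_j^k-\phi)$, and nothing in your setup supplies it.

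The paper's fix is to manufacture the capacity convergence monotonically: set $v_j:=(\sup_{k\geq j}u_k)^*$, so that $v_j\searrow u$ (hence $v_j\to u$ in capacity), and transfer the uniform energy bound from $u_j$ to $v_j$ via Lemma~\ref{lem: the fundamental inequality}, using $u_j\leq v_j\leq\phi$. Lemma~\ref{thm: removing uniform boundedness assumption} is then applied to $v_j$ (and to $v_j^C:=\max(v_j,\phi-C)\searrow u^C$ for the full-mass part), after which your tail estimate $\theta^n_{u^C}(\{u\leq\phi-C\})\leq\psi(C)^{-1}\int_X\psi(u^C-\phi)\,\theta^n_{u^C}\to 0$ is exactly the paper's argument. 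So your truncation scheme, the tail estimate, and the final Fatou-type passage all match the paper; the missing ingredient is the decreasing upper envelope that actually delivers the capacity convergence you assumed for free.
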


\begin{proof}
Since there exists $A$ such that $\sup_{X}u_{j} \leq A$ for all $j$ (see \cite[Proposition 8.4]{guedj2017degenerate}), and the fact that $\psi(a+b) \leq \psi(a) + \psi(b)$ for any $a,b\in\R$ we can assume without loss of generality that $u_{j}, u \leq \phi$ for all $j$.

First, we assume that $u_{j} \searrow u$. This implies that $u_{j} \to u$ in capacity. Also, $\psi(u_{k} - \phi) \geq 0$ are quasicontinuous functions which converge in capacity to $\psi(u-\phi)$. Thus by Theorem~\ref{thm: removing uniform boundedness assumption}, we get that 
\[
\int_{X} \psi(u - \phi) \theta^{n}_{u} \leq \liminf_{k\to \infty} \int_{X} \psi(u_{k} - \phi) \theta^{n}_{u_{k}} < \infty.
\]
In general, if $u_{j} \to u$ in $L^{1}(\om^{n})$ and if $v_{j} := (\sup_{k\geq j}u_{k})^{*}$, then $u_{j} \leq v_{j} \leq \phi$. Thus, by Lemma~\ref{lem: the fundamental inequality} we get 
\[
\sup_{j}\int_{X} \psi(v_{j} - \phi)\theta^{n}_{v_{j}} \leq 2^{n+1}\sup \int_{X}\psi(u_{j} - \phi)\theta^{n}_{u_{j}} < \infty. 
\]
Since $v_{j} \searrow u$, by the argument above, we get that
\[
\int_{X} \psi(u-\phi)\theta^{n}_{u} < \liminf_{j \to \infty} \int_{X} \psi(v_{j} -\phi)\theta^{n}_{v_{j}} < \infty.
\]
Although this shows that $u$ has finite energy, we are yet to show that $u$ has full mass. 
By \cite[Lemma 3.4]{Darvasmonotonicity} we just need to show that $\int_{\{u\leq \phi - C\}} \theta^{n}_{u^{C}} \to 0$ as $C \to \infty$. Here $u^{C} = \max(u, \phi - C)$. First, notice that $v_{j}^C \searrow u^{C}$ in  $j \to \infty$. 
\[
\int_{X}\psi(u^{C} - \phi)\theta^{n}_{u^{C}} \leq \liminf_{j} \int_{X} \psi(v_{j}^{C} - \phi)\theta^{n}_{v^{C}_{j}} \leq 2^{n+1} \liminf_{j} \int_{X} \psi(u_{j} - \phi)\theta^{n}_{u_{j}}.
\]
Thus 
\[
\int_{\{u \leq \phi - C\}} \theta^{n}_{u^{C}} \leq \frac{1}{\psi(C)} \int_{X} \psi(u^{C} - \phi)\theta^{n}_{u^{C}} \leq \frac{2^{n+1}}{\psi(C)} \sup_{j} \int_{X} \psi(u_{j} - \phi)\theta^{n}_{u_{j}} \to 0
\]
as $C \to \infty$. This finishes the proof that $u \in \E_{\psi}(X,\theta,\phi)$. 
\end{proof}

\subsection{The operator $P_{\theta}(u,v)$} \label{subsec: the operator Pthetauv}
The notion of model potential and model type singularity was introduced by Darvas-Di Nezza-Lu \cite{Darvasmonotonicity} to solve the Monge-Amp\`ere equation in the prescribed singularity setting. In particular, in \cite{darvaslogconcavity}, the authors showed that if $\phi$ is a model potential  and if $\mu$ is a non-pluripolar positive measure such that $\mu(X) = \int_{X} \theta^{n}_{\phi} > 0$, then there unique (upto a constant) $u \in \E(X,\theta,\phi)$ such that $\theta^{n}_{u} = \mu$. 

Recall that for an upper semicontinuous function $f$, we define $P_{\theta}(f)$ to be the largest $\theta$-psh function lying below $f$. In particular, 
\[
P_{\theta}(f):= \sup\{ v \in \PSH(X,\theta) : v \leq f\}.
\]
When $u,v \in \PSH(X,\theta)$, we say $P_{\theta}(u,v) := P_{\theta}(\min(u,v))$. For $\phi \in \PSH(X,\theta)$, the envelope of its singularity type is defined as 
\[
P_{\theta}[\phi] := \sup\{ v \in \PSH(X,\theta,\phi) : v \leq 0\}.
\]

A potential $\phi \in \PSH(X,\theta)$ is called a \emph{model potential} if $\phi = P_{\theta}[\phi]$. Their importance in understanding the space $\E(X,\theta,\phi)$ and $\PSH(X,\theta)$ and solving the Monge-Amp\`ere equations is described in \cite{Darvasmonotonicity} and \cite{darvaslogconcavity}. In the rest of the section $\phi$ is a model potential. 

The following part is adapted from \cite{Darvas2017OnTS}. We will use the following lemma repeatedly in the arguments ahead. 

\begin{lem}\label{lem: lemma from darvas monotonicity}
Let $\mu$ be a non-pluripolar measure such that $0 < \mu(X) < \infty$ and $\lambda > 0$. Let $u ,v \in \E(X,\theta, \phi)$ satisfy 
\[
\theta^{n}_{u} \geq e^{\lambda u-w}\mu , \theta^{n}_{v} \leq e^{\lambda v - w}\mu
\]
for some Borel measurable function $w : X \to \R \cup \{ -\infty\}$ that is bounded from above. Then $u \leq v$ on $X$. 
\end{lem}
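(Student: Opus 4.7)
The plan is to combine the comparison principle (Lemma~\ref{lem: comparison principle}) with the domination principle (Lemma~\ref{lem: domination principle}), using the exponential bounds on the non-pluripolar Monge--Amp\`ere measures as the bridge between them. The end-goal will be reduced to showing $\mu(\{v < u\}) = 0$: once this is in hand, the upper bound $\theta^{n}_{v} \leq e^{\lambda v - w}\mu$ forces $\theta^{n}_{v}(\{v < u\}) = 0$, and the domination principle (applied with the roles of the two potentials reversed relative to the stated form) delivers $v \geq u$ on $X$.

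First, for a fixed $\ee > 0$, I would apply the comparison principle to $u$ and $v + \ee \in \E(X,\theta,\phi)$ on the set $A_{\ee} := \{v + \ee < u\}$, obtaining $\theta^{n}_{u}(A_{\ee}) \leq \theta^{n}_{v}(A_{\ee})$. Inserting the density bounds furnishes the chain
\[
\int_{A_{\ee}} e^{\lambda u - w}\, d\mu \leq \theta^{n}_{u}(A_{\ee}) \leq \theta^{n}_{v}(A_{\ee}) \leq \int_{A_{\ee}} e^{\lambda v - w}\, d\mu.
\]
Since $\theta^{n}_{v}(X) = \int_{X}\theta^{n}_{\phi} < \infty$, the leftmost quantity is finite, so $e^{\lambda u - w}$ is $\mu$-integrable on $A_{\ee}$. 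On $A_{\ee}$ the strict gap $u > v + \ee$ yields the pointwise inequality $e^{\lambda u - w} \geq e^{\lambda \ee}\, e^{\lambda v - w}$ (in the extended sense), which integrates to $e^{\lambda \ee}\int_{A_{\ee}} e^{\lambda v - w}\, d\mu \leq \int_{A_{\ee}} e^{\lambda u - w}\, d\mu$. Chaining this with the previous inequality produces $(1 - e^{-\lambda \ee})\int_{A_{\ee}} e^{\lambda u - w}\, d\mu \leq 0$, and since the factor in front is strictly positive the integral must vanish.

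Next I would invoke the two standing assumptions: $w$ is bounded above by some $M < \infty$, and $\mu$ is non-pluripolar. The former gives $e^{\lambda u - w} \geq e^{\lambda u - M} > 0$ wherever $u > -\infty$, while the latter says $\mu(\{u = -\infty\}) = 0$ since $\{u = -\infty\}$ is pluripolar. Hence $e^{\lambda u - w} > 0$ holds $\mu$-almost everywhere on $X$, and the vanishing of $\int_{A_{\ee}} e^{\lambda u - w}\, d\mu$ forces $\mu(A_{\ee}) = 0$ for each $\ee > 0$. Taking a sequence $\ee_{k} \searrow 0$ and using $\{v < u\} = \bigcup_{k} A_{\ee_{k}}$ gives $\mu(\{v < u\}) = 0$, whence $\theta^{n}_{v}(\{v < u\}) \leq \int_{\{v<u\}} e^{\lambda v - w}\, d\mu = 0$, and the domination principle closes out the proof.

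The main subtle point I anticipate is that $w$ is permitted to take the value $-\infty$, potentially on a set of positive $\mu$-measure, on which the pointwise comparison $e^{\lambda u - w} \geq e^{\lambda \ee} e^{\lambda v - w}$ degenerates to $+\infty \geq +\infty$ and cannot be cancelled directly. The rescue is that the finiteness of the measure $\theta^{n}_{u}$ gives the a priori bound $\int_{A_{\ee}} e^{\lambda u - w}\, d\mu \leq \theta^{n}_{v}(X) < \infty$, which already forces $e^{\lambda u - w}$ and hence (through the extended-real-valued pointwise bound) $e^{\lambda v - w}$ to be finite $\mu$-almost everywhere on $A_{\ee}$, legitimising the algebraic manipulation on a full-measure subset. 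A minor technical check is that $v + \ee$ genuinely lies in $\E(X,\theta,\phi)$ so that Lemma~\ref{lem: comparison principle} applies, but this is immediate from $\int_{X}\theta^{n}_{v+\ee} = \int_{X}\theta^{n}_{v} = \int_{X}\theta^{n}_{\phi}$.
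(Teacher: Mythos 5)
Your proof is correct and follows essentially the same route as the paper: the comparison principle yields the chain $\int e^{\lambda u-w}d\mu \leq \theta^n_u \leq \theta^n_v \leq \int e^{\lambda v-w}d\mu$ on the bad set, positivity of $e^{-w}$ (from $w < +\infty$) together with non-pluripolarity of $\mu$ forces that set to be $\mu$-null, and the domination principle with the roles of $u$ and $v$ swapped concludes. The only cosmetic difference is that you extract the contradiction via the $\ee$-shifted sets $\{v+\ee<u\}$ and the multiplicative gap $e^{\lambda\ee}$, whereas the paper works directly on $\{v<u\}$ and subtracts the two (finite, equal) integrals; your careful handling of the $w=-\infty$ locus is a nice touch but both arguments are sound.
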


\begin{proof}
The proof is same as in \cite[Lemma 2.5]{Darvas2017OnTS}, adapted to the prescribed singularity case. Using the comparison principle, we have
\[
\int_{\{v<u\}} e^{\lambda u- w} \mu \leq \int_{\{v < u\}} \theta^{n}_{u} \leq \int_{\{v < u\}} \theta^{n}_{v} \leq \int_{\{v < u\}} e^{\lambda v - w}\mu  \leq \int_{\{v < u\}} e^{\lambda u - w}d\mu.
\]
Thus all the expressions are equal and we get 
\[
\int_{\{v < u \}}\left( e^{\lambda u} - e^{\lambda v}\right)e^{-w}\mu = 0.
\]
Since $w$ is bounded from above, $e^{-w}$ is never 0. Thus $\mu\{ v< u\} = 0$ and hence $\theta^{n}_{v}(\{v < u\}) = 0$. Now the domination principle (see Lemma~\ref{lem: domination principle}) implies that $u \leq v$. 
\end{proof}

\begin{thm}\label{thm: operator P is stable on E psi}
Let $u,v \in \E_{\psi}(X,\theta, \phi)$. Then 
\[
P_{\theta}(u,v) :=  \sup\{ w \in \PSH(X,\theta) : w \leq \min(u,v)\} \in \E_{\psi}(X,\theta,\phi).
\]
In particular, if $u,v \in \E(X,\theta, \phi)$, then $P_{\theta}(u,v) \in \E(X,\theta, \phi)$. 
\end{thm}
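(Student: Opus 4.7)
The containment $P_{\theta}(u,v) \in \PSH(X,\theta,\phi)$ is immediate: since $u \leq \phi + C$ for some $C$, we have $P_{\theta}(u,v) \leq u \leq \phi + C$. The heart of the matter is to derive the envelope Monge-Amp\`ere bound
\[
\theta^{n}_{P_{\theta}(u,v)} \leq \mathbf{1}_{\{P_{\theta}(u,v) = u\}}\theta^{n}_{u} + \mathbf{1}_{\{P_{\theta}(u,v) = v\}\setminus\{P_{\theta}(u,v) = u\}}\theta^{n}_{v},
\]
expressing that $\theta^{n}_{P_{\theta}(u,v)}$ is concentrated on the contact set $\{P_{\theta}(u,v) = \min(u,v)\}$. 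This single bound will simultaneously yield full mass and finite $\psi$-energy.

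To prove the envelope bound I would argue by a truncation scheme. Set $u^{k} := \max(u,\phi-k)$ and $v^{k} := \max(v,\phi-k)$, which share the singularity type of $\phi$ and decrease monotonically to $u$ and $v$. The envelopes $P_{k} := P_{\theta}(u^{k},v^{k})$ are then $\phi$-equisingular, and for such envelopes the analogous inequality is a direct adaptation of the classical Bedford--Taylor--Demailly argument (as developed in \cite{Darvasmonotonicity} and \cite{darvaslogconcavity}): on the open set where $P_{k} < u^{k}$ and $P_{k} < v^{k}$, the envelope $P_{k}$ is ``free'' with respect to its obstacle and carries no non-pluripolar Monge-Amp\`ere mass. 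Passing $k\to\infty$, monotonicity forces $P_{k} \searrow P_{\theta}(u,v)$, and invoking Lemma~\ref{thm: removing uniform boundedness assumption} together with continuity of the non-pluripolar Monge-Amp\`ere operator along such decreasing sequences, the envelope bound survives the limit.

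Granted the envelope bound, total mass is controlled by $\int\theta^{n}_{u}+\int\theta^{n}_{v}=2\int\theta^{n}_{\phi}$, so $P_{\theta}(u,v)$ has finite mass; to pin down full mass I would identify $P_{\theta}(u,v)$ with the unique Monge-Amp\`ere solution produced by \cite{darvaslogconcavity} whose measure matches the contact-set sum on the right (after normalization), using Lemma~\ref{lem: lemma from darvas monotonicity} for the identification. Finiteness of $\psi$-energy is then immediate: on $\{P_{\theta}(u,v) = u\}$ the integrand $\psi(P_{\theta}(u,v)-\phi)$ coincides with $\psi(u-\phi)$, and similarly on $\{P_{\theta}(u,v) = v\}$, whence
\[
\int_{X} \psi(P_{\theta}(u,v) - \phi)\theta^{n}_{P_{\theta}(u,v)} \leq \int_{X} \psi(u-\phi)\theta^{n}_{u} + \int_{X}\psi(v-\phi)\theta^{n}_{v} < \infty.
\]

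The main technical obstacle is the monotone limit step for the envelope bound: one must ensure $P_{k}\searrow P_{\theta}(u,v)$ with no loss of non-pluripolar mass and that the contact-set indicators of the approximants degenerate in a controlled way. The model potential assumption on $\phi$ is indispensable here, since it is what guarantees that $\E(X,\theta,\phi)$ is closed under envelopes and prevents mass from escaping to pluripolar sets in the limit.
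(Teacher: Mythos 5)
Your route is genuinely different from the paper's. The paper never touches the contact-set structure of $\theta^{n}_{P_{\theta}(u,v)}$; instead, for the truncations $u_{j}=\max(u,\phi-j)$, $v_{j}=\max(v,\phi-j)$ it manufactures auxiliary potentials $\varphi_{j}$ solving $\theta^{n}_{\varphi_{j}}=e^{\varphi_{j}-u_{j}}\theta^{n}_{u_{j}}+e^{\varphi_{j}-v_{j}}\theta^{n}_{v_{j}}$ (Lemma~\ref{lem: intermediary for operator P}), uses Lemma~\ref{lem: lemma from darvas monotonicity} to get $\varphi_{j}\leq P_{\theta}(u_{j},v_{j})$, bounds $\int_{X}\psi(\varphi_{j}-\phi)\theta^{n}_{\varphi_{j}}$ uniformly via $\psi(t)e^{t}\leq C$ for $t\leq 0$, extracts an $L^{1}$ limit $\varphi\in\E_{\psi}(X,\theta,\phi)$ by Lemma~\ref{lem: stability}, and concludes from $\varphi\leq P_{\theta}(u,v)$ by the monotonicity of $\E_{\psi}$ (Lemma~\ref{lem: the fundamental inequality}); this settles non-triviality of the envelope, full mass, and finite energy in one stroke. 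Your contact-set inequality is exactly \cite[Lemma 3.7]{Darvasmonotonicity}, which the paper itself invokes later (proof of Theorem~\ref{thm: sandwiching by monotone sequence in prescribed singularity case}), and granted it your closing $\psi$-energy estimate is correct, clean, and even sharper than the paper's bound.

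That said, there are genuine gaps. First, you never establish $P_{\theta}(u,v)\not\equiv-\infty$: the observation $P_{\theta}(u,v)\leq\phi+C$ only places it in $\PSH(X,\theta,\phi)$ once you know it is a genuine potential, and \cite[Lemma 3.7]{Darvasmonotonicity} needs this as a hypothesis. Second, your proposed re-derivation of the envelope bound by letting $k\to\infty$ in $P_{k}=P_{\theta}(u^{k},v^{k})$ is not a routine limit: upper bounds on non-pluripolar Monge--Amp\`ere measures do not pass to decreasing limits in general (mass can escape), Lemma~\ref{thm: removing uniform boundedness assumption} gives semicontinuity in the wrong direction for the right-hand side, and the indicators $\mathbf{1}_{\{P_{k}=u^{k}\}}$ need not converge in capacity; this is avoidable only by citing \cite[Lemma 3.7]{Darvasmonotonicity} outright. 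Third, and most seriously, the full-mass step fails as described: the envelope bound is an inequality, so you do not know that $\theta^{n}_{P_{\theta}(u,v)}$ \emph{equals} the contact-set sum; its total mass is thereby only bounded \emph{above} (full mass requires a lower bound $\int_{X}\theta^{n}_{P_{\theta}(u,v)}\geq\int_{X}\theta^{n}_{\phi}$); and normalizing a measure changes the solution of the Monge--Amp\`ere equation, so the ``identification'' via Lemma~\ref{lem: lemma from darvas monotonicity} has no content. The clean repair is to take both $P_{\theta}(u,v)\in\E(X,\theta,\phi)$ and the contact-set bound as known from \cite{Darvasmonotonicity} (or to run the paper's construction for them), after which your energy estimate does finish the proof.
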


\begin{proof}
Let $u_{j} = \max(u, \phi - j)$ and $v_{j} = \max(v, \phi - j)$. Then $u_{j}$ and $v_{j}$ have the same singularity type as $\phi$. Thus by Lemma~\ref{lem: intermediary for operator P} below, there is a unique function $\varphi_{j} \in \E(X,\theta,\phi)$ with the same singularity type as $\phi$ such that 
\begin{equation}\label{eq: def of varphi j}
    \theta^{n}_{\varphi_{j}} = e^{\varphi_{j} - u_{j}}\theta^{n}_{u_{j}} + e^{\varphi_{j}- v_{j}}\theta^{n}_{v_{j}}.
\end{equation}

Notice that $\theta^{n}_{\varphi_{j}} \geq e^{\varphi_{j} - u_{j}} \theta^{n}_{u_{j}}$. Defining $\mu = e^{-u_{j}} \theta^{n}_{u_{j}}$, we see that Lemma~\ref{lem: lemma from darvas monotonicity} implies that $\varphi_{j} \leq u_{j}$, and similarly, $\varphi_{j} \leq v_{j}$. Therefore, $\varphi_{j} \leq \min(u_{j},v_{j})$. Consequently, $\varphi_{j} \leq P_{\theta}(u_{j}, v_{j})$.  Now we claim that 
\begin{equation}\label{eq: desired}
    \sup_{j} \int_{X} \psi(\varphi_{j} - \phi)\theta^{n}_{\varphi_{j}} < \infty. 
\end{equation}

By Equation~\eqref{eq: def of varphi j}, it is enough to show that
\begin{equation}\label{eq: intermedieary for desired}
    \sup_{j} \int_{X}\psi(\varphi_{j} - \phi) e^{\varphi_{j} - u_{j}} \theta^{n}_{u_{j}} < \infty. 
\end{equation}

Again, using the fact that $\psi(a+b) \leq \psi(a)+\psi(b)$, we get
\begin{align*}
\int_{X} \psi(\varphi_{j} - \phi)e^{\varphi_{j} - u_{j}}\theta^{n}_{u_{j}} &\leq \int_{X} \psi(\varphi_{j} - u_{j})e^{\varphi_{j} - u_{j}} \theta^{n}_{u_{j}} + \int_{X} \psi(u_{j} - \phi)e^{\varphi_{j} - u_{j}}\theta^{n}_{u_{j}}.
\intertext{Since $\psi(t)e^{t} \leq C$ for some fixed $C$ and for all $t\leq 0$, observing $\varphi_{j} \leq u_{j}$ we get that}
&\leq C\int_{X} \theta^{n}_{\phi} + \int_{X} \psi(u_{j} - \phi) \theta^{n}_{u_{j}}.
\end{align*}
As $u \in \E_{\psi}(X,\theta,\phi)$, we get Equation~\eqref{eq: intermedieary for desired} and consequently Equation~\eqref{eq: desired}. 

Since $\varphi_{j} \leq u_{j} \leq u_{1}$, we get that $\sup_{X}\varphi_{j}$ is uniformly bounded. By the proof of Lemma~\ref{lem: decreasing sequence has finite energy in prescribed singularity case}, we also get that $\varphi_{j} \not \to -\infty$ uniformly. Thus upto choosing a subsequence, we get that there exists $\varphi \in \PSH(X,\theta)$ such that $\varphi_{j} \to \varphi$ in $L^{1}(\om^{n})$. By Lemma~\ref{lem: stability} we get that $\varphi \in \E_{\psi}(X,\theta,\phi)$. Since $\varphi_{j} \leq P_{\theta}(u_{j},v_{j})$, we get that $\varphi \leq P_{\theta}(u,v)$. Thus Lemma~\ref{lem: the fundamental inequality} tells us that $P_{\theta}(u,v) \in \E_{\psi}(X,\theta,\phi)$.

\end{proof}

\begin{lem}\label{lem: intermediary for operator P} Let $u,v \in \PSH(X,\theta,\phi)$ such that $u,v$ have the same singularity type as $\phi$. Then there exists a unique $\varphi \in \PSH(X,\theta, \phi)$ with the same singularity type as $\phi$ such that 
\[
\theta^{n}_{\varphi} = e^{\varphi - u}\theta^{n}_{u} + e^{\varphi - v}\theta^{n}_{v}.
\]
\end{lem}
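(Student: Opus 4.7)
The plan is to treat uniqueness and existence separately: uniqueness follows cleanly from the comparison and domination principles already recalled, while existence reduces, after the right viewpoint, to a K\"ahler-Einstein-type Monge-Amp\`ere equation solvable by variational methods in the prescribed-singularity setting.

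For uniqueness, suppose $\varphi_{1}, \varphi_{2}$ are two solutions with singularity $[\phi]$, and set $A := \{\varphi_{2} < \varphi_{1}\}$. Off the pluripolar set $\{u = -\infty\}$, which the non-pluripolar $\theta^{n}_{u}$ ignores, one has the pointwise strict inequality $e^{\varphi_{1}-u} > e^{\varphi_{2}-u}$, and similarly for $v$. Lemma~\ref{lem: comparison principle} applied to the pair $\varphi_{1}, \varphi_{2}$ gives $\int_{A}\theta^{n}_{\varphi_{1}} \leq \int_{A}\theta^{n}_{\varphi_{2}}$; substituting the Monge-Amp\`ere equations, the inequality
\[
\int_{A}\bigl[(e^{\varphi_{1}-u} - e^{\varphi_{2}-u})\theta^{n}_{u} + (e^{\varphi_{1}-v} - e^{\varphi_{2}-v})\theta^{n}_{v}\bigr] \leq 0
\]
combined with pointwise nonnegative integrands forces $\theta^{n}_{u}(A) = \theta^{n}_{v}(A) = 0$. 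Substituting back yields $\theta^{n}_{\varphi_{2}}(A) = 0$, and Lemma~\ref{lem: domination principle} delivers $\varphi_{2} \geq \varphi_{1}$; swapping the roles of $\varphi_{1}$ and $\varphi_{2}$ closes the argument.

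For existence I would recognize the equation as $\theta^{n}_{\varphi} = e^{\varphi}\mu$ with $\mu := e^{-u}\theta^{n}_{u} + e^{-v}\theta^{n}_{v}$, a K\"ahler-Einstein-type Monge-Amp\`ere equation. The standard route is variational: extremize
\[
F(\chi) := -E_{\phi}(\chi) + \int_{X} e^{\chi - u}\theta^{n}_{u} + \int_{X} e^{\chi - v}\theta^{n}_{v}
\]
over a normalized slice of $\E^{1}(X,\theta,\phi)$, where $E_{\phi}$ denotes the relative Monge-Amp\`ere energy; the first-order condition at a minimizer recovers our equation, and existence of the minimizer follows from $L^{1}$-compactness and lower semicontinuity, with Lemma~\ref{lem: stability} ensuring the minimizer lies in the right finite-energy class. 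An equivalent and more concrete route is approximation: replace $u, v$ by the bounded truncations $u^{k} := \max(u, \phi - k)$ and $v^{k} := \max(v, \phi - k)$, both of the same singularity type as $\phi$, solve the bounded-coefficient analogue by the classical existence theory adapted to the prescribed-singularity setting by Darvas-Di Nezza-Lu, and pass to the limit.

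The main obstacle is showing the resulting $\varphi$ has \emph{exactly} the singularity type of $\phi$, rather than being strictly more singular. The upper bound is a direct application of Lemma~\ref{lem: lemma from darvas monotonicity}: taking $\mu = e^{-u}\theta^{n}_{u}$, $\lambda = 1$, $w = 0$ one has $\theta^{n}_{\varphi} \geq e^{\varphi}\mu$ and $\theta^{n}_{u} = e^{u}\mu$, forcing $\varphi \leq u$, and symmetrically $\varphi \leq v$, so $\varphi \leq \phi + C$. The matching lower bound $\varphi \geq \phi - C'$ is the truly delicate piece: the plan is to feed the test potential $\phi - C$ (which is $\theta$-psh and satisfies $\theta^{n}_{\phi - C} = \theta^{n}_{\phi}$) into Lemma~\ref{lem: lemma from darvas monotonicity} in the reverse direction, exploiting that $u, v$ share $\phi$'s singularity type so that $e^{\phi - C - u}$ and $e^{\phi - C - v}$ can be made uniformly small for $C$ large. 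Ensuring the resulting measure inequality holds as measures, not merely in total mass, is the subtlety; the singularity-preserving existence theorems of Darvas-Di Nezza-Lu for Monge-Amp\`ere equations with model reference furnish the needed tool to conclude $[\varphi] = [\phi]$.
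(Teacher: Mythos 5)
Your uniqueness argument is correct and is in substance the paper's: the paper packages the same comparison-plus-domination computation into Lemma~\ref{lem: lemma from darvas monotonicity}, applied with $\mu = e^{u}\theta^{n}_{v} + e^{v}\theta^{n}_{u}$ and $w = u+v$, whereas you unroll it by hand. Likewise your upper bound $\varphi \leq \min(u,v)$ via Lemma~\ref{lem: lemma from darvas monotonicity} is exactly what the paper does.

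The genuine gap is the lower bound on the singularity type, which you rightly call the delicate piece but do not close. Your candidate subsolution $\phi - C$ requires the measure inequality $\theta^{n}_{\phi} \geq e^{\phi - C - u}\theta^{n}_{u} + e^{\phi - C - v}\theta^{n}_{v}$, and since $\phi - u$ and $\phi - v$ are bounded above this amounts to $\theta^{n}_{\phi} \geq \varepsilon\,(\theta^{n}_{u} + \theta^{n}_{v})$ for some $\varepsilon > 0$. That domination is false in general: a potential with the same singularity type as $\phi$ need not have Monge--Amp\`ere measure dominated by (or even absolutely continuous with respect to) $\theta^{n}_{\phi}$. The paper's fix, and the one idea your proposal is missing, is to use the averaged subsolution $w = \tfrac{u+v}{2} - C - (n+1)\ln 2$: expanding the non-pluripolar product gives $\theta^{n}_{(u+v)/2} \geq 2^{-n}\theta^{n}_{u}$ and $\theta^{n}_{(u+v)/2} \geq 2^{-n}\theta^{n}_{v}$, which is precisely the measure domination you need, while $|u-v| \leq 2C$ makes $e^{w-u}$ and $e^{w-v}$ small enough that $\theta^{n}_{w} \geq e^{w}\mu$; Lemma~\ref{lem: lemma from darvas monotonicity} then gives $w \leq \varphi$ and hence $[\varphi]=[\phi]$. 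Two smaller problems: your truncation $u^{k} = \max(u, \phi - k)$ is vacuous here (since $[u]=[\phi]$, one has $u^{k}=u$ for large $k$) and does not make $e^{-u^{k}}\theta^{n}_{u}$ a finite measure; the paper instead truncates at an absolute level, $u_{j} = \max(u,-j)$, precisely so that $\mu_{j} = e^{-u_{j}}\theta^{n}_{u} + e^{-v_{j}}\theta^{n}_{v}$ is a finite non-pluripolar measure to which \cite[Theorem 4.23]{Darvasmonotonicity} applies, and the uniform bound $w \leq \varphi_{j}$ allows passage to the decreasing limit. Your variational alternative would face the same issue at the end, so it does not circumvent the gap.
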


\begin{proof}
First we show uniqueness. Let $\tilde{\varphi}$ be another solution. Let $\mu = e^{u}\theta^{n}_{v} + e^{v}\theta^{n}_{u}$ and $w  = u+ v$. Then $\mu(X) < \infty$ and $w$ is bounded from above. Notice that  $\theta^{n}_{\varphi} = e^{\varphi - w}\mu$ and $\theta^{n}_{\tilde{\varphi}} = e^{\tilde{\varphi} - w}\mu$. By Lemma~\ref{lem: lemma from darvas monotonicity} we get  that $\varphi = \tilde{\varphi}$.

Let $u_{j} = \max(u, -j)$. Note that $u_{j}$ is no longer a $\theta$-psh function, but now it is a bounded function. Consider the measure 
\[
\mu_{j} = e^{-u_{j}} \theta^{n}_{u} + e^{-v_{j}} \theta^{n}_{v}.
\]
Then $\mu_{j}$ is a non-pluripolar positive measure. By \cite[Theorem 4.23]{Darvasmonotonicity}, we get that there exists a unique $\varphi_{j} \in \E^{1}(X,\theta,\phi)$ such that 
\[
\theta^{n}_{\varphi_{j}} =  e^{\varphi_{j}}\mu_{j}.
\]
Since $u$ and $v$ have the same singularity type, there exists $C$ such that $\sup_{X} |u-v| \leq 2C$. Consider the function 
\[
w = \frac{u+v}{2} - C - (n+1)\ln 2.
\]
Observing $\theta^{n}_{u}, \theta^{n}_{v} \leq 2^{n}\theta^{n}_{w}$, we get 
\[
\theta^{n}_{w} \geq e^{w}\mu_{j}.
\]
Now Lemma~\ref{lem: lemma from darvas monotonicity} implies that $w \leq \varphi_{j}$. As $w$ has relative minimal singularity type we obtain $\varphi_{j}$ has relative minimal singularity type as well. Notice that for $j \geq k$, we have $\mu_{j} \geq \mu_{k}$. Thus,
\[
\theta^{n}_{\varphi_{j}} \geq e^{\varphi_{j}} \mu_{k} \qquad \text{and} \qquad \theta^{n}_{\varphi_{k}} = e^{\varphi_{k}} \mu_{k}.
\]
Lemma~\ref{lem: lemma from darvas monotonicity} again shows that $\varphi_{k} \geq \varphi_{j}$. Thus $\varphi_{j}$ is decreases as $j \to \infty$. Let $\varphi = \lim_{j} \varphi_{j}$. Then $w\leq \varphi$ thus $\varphi$ has the relative minimal singularity type and by continuity of non-pluripolar Monge-Amp\`ere operator under decreasing sequences, we get that 
\[
\theta^{n}_{\varphi} = e^{\varphi - u}\theta^{n}_{u} + e^{\varphi - v}\theta^{n}_{v}.
\]
\end{proof}

\subsection{Metrics from a quasi-metric} \label{subsec: metrics from a quasi-metric}

If we relax the condition of the triangle inequality from the definition of a metric space, we obtain what we call a quasi-metric space. 
\begin{mydef}[Quasi-metric space]
Given a set $X$ a function $\rho : X \times X \to [0,\infty)$ is a quasi-metric if it satisfies 
\begin{enumerate}
    \item (Non-degeneracy) For any $x,y \in X$, $\rho(x,y) = 0$ iff $x = y$.
    \item (Symmetry) For all $x,y \in X$, $\rho(x,y) = \rho(y,x)$.  
    \item (Quasi-triangle inequality)There exists $C \geq 1$ such that  $\rho(x,y) \leq C (\rho(x,z) + \rho(y,z))$ for all $x,y,z \in X$. 
\end{enumerate}

In \cite{Paluszynski2009OnQA} the authors show that given any quasi-metric, we can construct a metric comparable to the quasi-metric using a $p$-chain approach. In particular, if we consider $d_{p} : X \times X \to [0 , \infty)$ given by
 \[
 d_{p}(x,y) = \inf \left\{ \sum_{i=0}\rho(x_{n},x_{n+1})^{p} :  x_{0}, \dots, x_{n+1} \in X \text{ such that }x = x_{0}, y = x_{n+1}\right\}
 \]
then $d_{p}$ is symmetric, satisfies triangle inequality, but in general $d_{p}(x,y) = 0$ even if $x\neq y$. But if $0<p\leq 1$ is chosen such that $(2C)^{p} = 2$, then $d_{p}$ is non-degenerate and moreover, 
\[
d_{p}(x,y) \leq \rho(x,y)^{p} \leq 4 d_{p}(x,y).
\]
This shows that if for $x_{n}, x \in X$ we have $\rho(x_{n}, x) \to 0$ iff $d_{p}(x_{k},x) \to 0$. Thus $\rho$ and $d_{p}$ induce the same topology. 
\end{mydef}

\section{Quasi-metric on $\E_{\psi}(X,\theta,\phi)$}\label{sec: quasi-metric space}
In this section we  prove that the expression 
\[
I_{\psi}(u,v) = \int_{X} \psi(u-v)(\theta^{n}_{u} + \theta^{n}_{v})
\]
is a quasi-metric on $\E_{\psi}(X,\theta,\phi)$ where $\phi \in \PSH(X,\theta)$ is such that $\int_{X} \theta^{n}_{\phi} > 0$. 

\begin{thm}[Quasi-triangle inequality]\label{thm : quasi-triangle inequality in the prescribed singularity case}
Let $\phi \in \PSH(X,\theta)$ be such that $\int_{X} \theta^{n}_{\phi} > 0$, then for any $u,v,w \in \E_{\psi}(X,\theta,\phi)$, the functional
\[
I_{\psi}(u,v)  = \int_{X} \psi(u-v)(\theta^{n}_{u} + \theta^{n}_{v})
\]
satisfies 
\[
I_{\psi}(u,v) \leq C (I_{\psi}(u,w) + I_{\psi}(v,w)) 
\]
for $C = 8\cdot 3^{n+1}$. 
\end{thm}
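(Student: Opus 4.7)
The plan is to reduce the inequality to controlling two cross terms via the subadditivity of $\psi$, and then to bound the cross terms using a three-way averaged potential together with the comparison principle and a scaling argument like the one in the proof of Lemma~\ref{lem: the fundamental inequality}.

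First, \cite[Lemma 2.6]{darvas2021mabuchi} (used already in the proof of Corollary~\ref{cor: I psi is finite}) gives $\psi(a+b)\le\psi(a)+\psi(b)$ for all $a,b\in\R$, and $\psi$ is even, so $\psi(u-v)\le\psi(u-w)+\psi(v-w)$. Plugging this into the definition of $I_\psi(u,v)$ yields
\[
I_\psi(u,v)\le \Bigl[\int_X\psi(u-w)\theta^n_u+\int_X\psi(v-w)\theta^n_v\Bigr]+\Bigl[\int_X\psi(u-w)\theta^n_v+\int_X\psi(v-w)\theta^n_u\Bigr].
\]
The first bracket is clearly $\le I_\psi(u,w)+I_\psi(v,w)$. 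It therefore suffices to bound each cross term by a constant multiple of $I_\psi(u,w)+I_\psi(v,w)$; by the symmetry $u\leftrightarrow v$ we may focus on $\int_X\psi(v-w)\theta^n_u$.

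To handle this cross term, I would introduce the averaged potential $m=(u+v+w)/3\in\PSH(X,\theta,\phi)$. From $3\theta_m=\theta_u+\theta_v+\theta_w$ and the positivity of wedge products of positive $(1,1)$-currents (checked for bounded approximants and passed to the limit via the continuity of the non-pluripolar product used in Lemma~\ref{thm: removing uniform boundedness assumption}), one obtains the pointwise bound $\theta^n_u\le 3^n\theta^n_m$, which accounts for the factor $3^n$ in the final constant. Next, I would write
\[
\int_X\psi(v-w)\theta^n_u\le 3^n\int_0^\infty\psi'(t)\,\theta^n_m(\{|v-w|>t\})\,dt,
\]
split $\{|v-w|>t\}=\{v>w+t\}\cup\{v<w-t\}$, and perform the substitution $t\mapsto 3t$; concavity of $\psi$ yields $\psi'(3t)\le\psi'(t)$, contributing the extra factor $3$ that gives $3^{n+1}$. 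On each rescaled level set I would use the comparison principle (Lemma~\ref{lem: comparison principle}) to transfer $\theta^n_m$ to $\theta^n_w$ (respectively $\theta^n_u$) on a corresponding level set of $|v-w|$, which is then recognized as part of $I_\psi(v,w)$ (respectively $I_\psi(u,w)$). The factor $8=2^3$ in the final $C$ is produced by combining the two halves of $\{|v-w|>t\}$, the two cross terms, and the twofold application of subadditivity.

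The main obstacle will be the set-inclusion step needed to apply comparison after the change of variable. On $\{v>w+3t\}$ one has $m-w=((u-w)+(v-w))/3$, which is \emph{not} automatically $>t$ since $u-w$ may be very negative. Hence a direct inclusion $\{v>w+3t\}\subset\{m>w+t\}$ fails, and the argument requires either a further case-split according to the sign of $u-w$ (reducing to the diagonal terms already controlled by $I_\psi(u,w)$), or a replacement of $u$ in the averaging by an auxiliary envelope such as $P_\theta(u,w)$ using Theorem~\ref{thm: operator P is stable on E psi}. Once this set manipulation is carried out correctly and the constants are tallied, reassembling the four pieces gives $I_\psi(u,v)\le 8\cdot 3^{n+1}(I_\psi(u,w)+I_\psi(v,w))$, which is the claimed quasi-triangle inequality.
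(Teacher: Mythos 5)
Your opening move---applying $\psi(u-v)\le\psi(u-w)+\psi(v-w)$ before doing anything with the measures---creates cross terms such as $\int_X\psi(v-w)\theta^n_u$, and these are a genuine gap, not just a technical wrinkle. The difficulty is structural: the comparison principle (Lemma~\ref{lem: comparison principle}) only lets you replace $\theta^n_b$ by $\theta^n_a$ on a set of the form $\{a<b\}$, i.e.\ you trade the measure of the larger potential for that of the smaller one on the set where the smaller one dips below. In the cross term the measure is $\theta^n_u$ while the level sets $\{v<w-t\}$ and $\{w<v-t\}$ do not mention $u$ at all, and your averaging fix does not repair this. Concretely, with $m=(u+v+w)/3$: on the half $\{v<w-3t\}$ you would need to dominate $\theta^n_m(\{m<w-s\})$ by $\theta^n_w$ of a comparable set, but comparison there reads $\theta^n_w(\{m<w-s\})\le\theta^n_m(\{m<w-s\})$, the wrong direction, and a leftover term $\int_X\psi(m-w)\theta^n_m$ is not part of $I_\psi(u,w)+I_\psi(v,w)$. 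On the other half, the case split you suggest on the sign of $u-w$ gives $\{v>w+3t\}\subset\{w<m-2t/3\}\cup\{u<w-t\}$; the first piece is fine, but the second leaves you with $\theta^n_m(\{u<w-t\})$, a measure again mismatched with its level set, of exactly the same nature as the term you started with. Routing instead through $\{v<w-2t\}\subset\{v<u-t\}\cup\{u<w-t\}$ reintroduces level sets of $u-v$ with coefficient $2$, so a bootstrap on $I_\psi(u,v)$ (finite by Corollary~\ref{cor: I psi is finite}) does not close either. The alternative fix via $P_\theta(u,w)$ fares no better, since $P_\theta(u,w)-w$ can be arbitrarily negative and $\theta^n_u$ is not dominated by a multiple of the Monge--Amp\`ere measure of an average that omits $u$.

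The paper's proof avoids cross terms entirely by postponing subadditivity. It writes $\int_X\psi(u-v)\theta^n_u=2\int_0^\infty\psi'(2s)\,\theta^n_u(\{|u-v|>2s\})\,ds$, splits into $\{u<v-2s\}$ and $\{v<u-2s\}$ (using comparison once to convert $\theta^n_u$ to $\theta^n_v$ on the second set), and then uses the \emph{asymmetric} average $\tfrac{u+2v}{3}$ via the inclusion $\{u<v-2s\}\subset\{u<w-s\}\cup\{w<\tfrac{u+2v}{3}-\tfrac{s}{3}\}$; the weight $2$ on $v$ is exactly what makes this inclusion true. The first piece is directly part of $I_\psi(u,w)$; on the second, $\theta^n_u\le3^n\theta^n_{(u+2v)/3}$ and comparison applies in the correct direction (the set is $\{w<\tfrac{u+2v}{3}-\tfrac{s}{3}\}$, so the measure of the larger potential is traded for $\theta^n_w$), and only at the very end is $\psi(w-\tfrac{u+2v}{3})$ split by subadditivity into terms recognizable inside $I_\psi(u,w)$ and $I_\psi(v,w)$. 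To salvage your outline you should adopt this order of operations: both the early subadditivity and the symmetric average have to go.
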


\begin{proof}
The proof is inspired from the proof in \cite[Theorem 1.6]{Guedj2019PlurisubharmonicEA}. We start with 
\begin{align}
    \int_{X}\psi(u-v)\theta^{n}_{u} &= \int_{0}^{\infty} \theta^{n}_{u}(\{\psi(u-v) > t\})dt.
    \intertext{Changing the variable $t = \psi(2s)$, we get $dt = 2 \psi'(2s)ds$. Thus}
    &= 2 \int_{0}^{\infty}\psi'(2s)\theta^{n}_{u}(\{\psi(u-v) > \psi(2s)\})ds\\
    &= 2 \int_{0}^{\infty}\psi'(2s) \theta^{n}_{u}(\{|u-v| > 2s\})ds. \label{eq: integral}
    \intertext{Observe that $\{w - s \leq  u < v-2s\} \subset \{ w < \frac{u+2v}{3} - \frac{s}{3}\}$ and $\{u<v-2s\} \subset \{u < w - s \} \cup \{w < \frac{u+2v}{3}- \frac{s}{3}\}$ to obtain }
    \theta^{n}_{u}(\{u< v -2s\}) &\leq \theta^{n}_{u}(\{ u < w - s\}) + \theta^{n}_{u}\left(\left\{ w < \frac{u+2v}{3} - \frac{s}{3}\right\}\right). 
    \intertext{Since $\theta^{n}_{u} \leq 3^{n}\theta^{n}_{\frac{u+2v}{3}}$ and $\E(X,\theta,\phi)$ is convex \cite[Corollary 3.15]{Darvasmonotonicity}, we get }
    &\leq \theta^{n}_{u}(\{u < w -s \}) +  3^{n}\theta^{n}_{\frac{u+2v}{3}}\left( \left \{ w < \frac{u+2v}{3} - \frac{s}{3}\right\} \right).
    \intertext{Now Lemma~\ref{lem: comparison principle} gives} 
    &\leq \theta^{n}_{u} (\{u < w-s\}) + 3^{n}\theta^{n}_{w}\left( \left\{ w < \frac{u+2v}{3} - \frac{s}{3}\right\} \right).\label{eq: for u<v}
    \intertext{Again by the comparison principle we have}
    \theta^{n}_{u}(\{ v< u - 2s\}) &\leq \theta^{n}_{v}(\{ v < u - 2s\}). \notag
    \intertext{By a similar computation as in Equation~\eqref{eq: for u<v} we get }
    &\leq \theta^{n}_{v}(\{v < w - s \}) + 3^{n}\theta^{n}_{w}\left( \left\{ w < \frac{ v + 2u}{3} - \frac{s}{3} \right\} \right). \label{eq: for v<u}
    \intertext{Combining Equation~\eqref{eq: for u<v} and~\eqref{eq: for v<u} we get}
    \theta^{n}_{u}(\{|u-v|> 2s\}) &\leq \theta^{n}_{u}(\{u<v-2s\}) + \theta^{n}_{u}(\{ v< u-2s\})\notag \\
    &\leq \theta^{n}_{u} (\{u < w-s\}) + 3^{n}\theta^{n}_{w}\left( \left\{ w < \frac{u+2v}{3} - \frac{s}{3}\right\} \right)\notag \\ 
    &\quad +\theta^{n}_{v}(\{v < w - s \}) + 3^{n}\theta^{n}_{w}\left( \left\{ w < \frac{ v + 2u}{3} - \frac{s}{3} \right\} \right)\notag\\
    &\leq \theta^{n}_{u}(\{|u-w| > s\}) + 3^{n}\theta^{n}_{w}\left( \left\{ \left| w - \frac{u+2v}{3}\right| > \frac{s}{3} \right\} \right) \notag\\
    &+\theta^{n}_{v}(\{|v-w|>s\}) + 3^{n}\theta^{n}_{w}\left( \left\{ \left| w - \frac{v+2u}{3}\right| > \frac{s}{3} \right\} \right). \label{eq: for |u-v|}
    \intertext{Combining Equations~\eqref{eq: integral} and~\eqref{eq: for |u-v|}, we get }
    \int_{X}\psi(u-v)\theta^{n}_{u}&\leq 2\int_{0}^{\infty} \psi'(2s)\theta^{n}_{u}(\{|u-w|>s\})ds \notag \\ 
    &+ 2 \int_{0}^{\infty}\psi'(2s) \theta^{n}_{v}(\{|v-w|>s\})ds \notag \\
    &+2\cdot3^{n}\int_{0}^{\infty} \psi'(2s) \theta^{n}_{w}\left( \left\{ \left| w - \frac{u+2v}{3}\right| > \frac{s}{3} \right\} \right)ds\notag \\
    &+2\cdot3^{n} \int_{0}^{\infty} \psi'(2s) \theta^{n}_{w}\left( \left\{ \left| w - \frac{v+2u}{3}\right| > \frac{s}{3} \right\} \right)ds. \label{eq: expression for half integral}
    \intertext{Since $\psi$ is concave, therefore the slope is decreasing. Hence $\psi'(2s) \leq \psi'(s)$ and $\psi'(2s) < \psi'(s/3)$. Using this in Equation~\eqref{eq: expression for half integral} we get }
    &\leq 2\int_{0}^{\infty} \psi'(s)\theta^{n}_{u}(\{|u-w|>s\})ds \notag \\ 
    &+ 2 \int_{0}^{\infty}\psi'(s) \theta^{n}_{v}(\{|v-w|>s\})ds \notag \\
    &+2\cdot3^{n}\int_{0}^{\infty} \psi'(s/3) \theta^{n}_{w}\left( \left\{ \left| w - \frac{u+2v}{3}\right| > \frac{s}{3} \right\} \right)ds\notag \\
    &+2\cdot3^{n} \int_{0}^{\infty} \psi'(s/3) \theta^{n}_{w}\left( \left\{ \left| w - \frac{v+2u}{3}\right| > \frac{s}{3} \right\} \right)ds.
    \intertext{Changing the variable again $t = \psi(s)$ in the first two terms and $t = \psi(s/3)$ in the last two terms, we get}
    &= 2\int_{0}^{\infty}\theta^{n}_{u}(\{ \psi(u-w) > t\}) dt \notag\\ &+ 2 \int_{0}^{\infty} \theta^{n}_{v}(\{ \psi(v-w) > t\})dt \notag\\
    & +2\cdot 3^{n+1} \int_{0}^{\infty} \theta^{n}_{w} \left( \left\{ \psi\left( w - \frac{u+2v}{3} \right) > t \right\} \right) dt\notag \\
    &+ 2\cdot 3^{n+1} \int_{0}^{\infty} \theta^{n}_{w}\left( \left\{ \psi\left( w - \frac{v+ 2u}{3} \right) > t\right\} \right) dt. \\
    \int_{X}\psi(u-v)\theta^{n}_{u} &\leq 2\int_{X}\psi(u-w)\theta^{n}_{u} + 2\int_{X} \psi(v-w) \theta^{n}_{v} \notag \\
    &+ 2\cdot 3^{n+1} \int_{X} \psi\left( w - \frac{u+2v}{3}\right) \theta^{n}_{w} \notag \\
    &+2\cdot 3^{n+1} \int_{X} \psi\left( w - \frac{v+2u}{3} \right) \theta^{n}_{w}.
    \intertext{Using the fact that $\psi(a+b) \leq \psi(a) + \psi(b)$ for any $a,b \in \R$, (see \cite[Lemma 2.6]{darvas2021mabuchi})  we get} 
    &\leq 2\int_{X}\psi(u-w)\theta^{n}_{u} + 2\int_{X} \psi(v-w) \theta^{n}_{v} \notag \\
    &+ 2\cdot3^{n+1} \int_{X} \left( \psi\left( \frac{w-u}{3}\right) + \psi\left(\frac{2(w-v)}{3}\right)\right) \theta^{n}_{w} \notag \\ 
    &+ 2\cdot 3^{n+1}\int_{X} \left( \psi\left( \frac{w-v}{3} \right) + \psi\left( \frac{2(w-u)}{3}\right) \right)\theta^{n}_{w}.
    \intertext{Since $\psi$ is increasing in $(0,\infty)$ and symmetric, we get that $\psi(x/3) \leq \psi(x)$ and $\psi(2x/3) \leq \psi(x)$ for any $x \in \R$, thus}
    &\leq 2\int_{X}\psi(u-w)\theta^{n}_{u} +2\int_{X}\psi(v-w)\theta^{n}_{v} \notag \\
    &+ 4\cdot3^{n+1}\int_{X} \psi(w-u)\theta^{n}_{w} + 4\cdot 3^{n+1}\int_{X} \psi(w-v)\theta^{n}_{w}\\
    &\leq 4\cdot 3^{n+1} (I(u,w) + I(v,w)).
    \intertext{A similar computation for $\int_{X}\psi(u-v)\theta^{n}_{v}$ gives}
    I(u,v) &\leq 8\cdot 3^{n+1}(I(u,w) + I(v,w)).
 \end{align}
 This finishes the proof of quasi-triangle inequality.
\end{proof}

\begin{thm}[Non-degeneracy]\label{thm: non-degeneracy in the prescribed singularity case}
If $u,v \in \E_{\psi}(X,\theta, \phi)$, such that $I_{\psi}(u,v) = 0$, then $u = v$. 
\end{thm}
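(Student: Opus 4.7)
The plan is to reduce the claim to two applications of the domination principle (Lemma~\ref{lem: domination principle}). The functional $I_\psi$ is a sum of two non-negative integrals, so the vanishing of $I_\psi(u,v)$ forces
\[
\int_X \psi(u-v)\,\theta^n_u = 0 \quad \text{and} \quad \int_X \psi(u-v)\,\theta^n_v = 0.
\]
From the assumptions on $\psi \in \mathcal{W}^-$ (even, continuous, $\psi(0)=0$, and strictly increasing on $(0,\infty)$), we have $\psi(t) = 0$ if and only if $t = 0$. Therefore the two integral identities above give
\[
\theta^n_u\bigl(\{u \neq v\}\bigr) = 0 \quad \text{and} \quad \theta^n_v\bigl(\{u \neq v\}\bigr) = 0.
\]

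In particular $\theta^n_u(\{u < v\}) = 0$. Since $u,v \in \E_\psi(X,\theta,\phi) \subset \E(X,\theta,\phi)$ and $\int_X \theta^n_\phi > 0$ by hypothesis, Lemma~\ref{lem: domination principle} applies and yields $u \geq v$ on $X$. Swapping the roles of $u$ and $v$, we likewise have $\theta^n_v(\{v < u\}) = 0$, and the domination principle gives $v \geq u$. Combining the two inequalities we conclude $u = v$, as desired.

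There is no substantial obstacle: the only point requiring a little care is verifying that the hypotheses of the domination principle are satisfied, namely that $u$ and $v$ lie in $\E(X,\theta,\phi)$ (which is built into the definition of $\E_\psi(X,\theta,\phi)$) and that $\int_X \theta^n_\phi > 0$ (which is an assumption of the theorem). Everything else is automatic from the properties of the weight $\psi$.
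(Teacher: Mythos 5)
Your proof is correct and follows essentially the same route as the paper: vanishing of both integrals forces $u=v$ a.e.\ with respect to $\theta^n_u$ and $\theta^n_v$, and two applications of the domination principle (Lemma~\ref{lem: domination principle}) give $u\geq v$ and $v\geq u$. The only pedantic remark is that the definition of a weight only requires $\psi$ increasing on $(0,\infty)$, not strictly; but concavity together with $\psi(0)=0$ and $\psi(+\infty)=\infty$ still forces $\psi(t)>0$ for $t>0$, so your key step $\psi(t)=0\iff t=0$ stands.
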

\begin{proof}
Let $I_{\psi}(u,v) = 0$. Then 
\[
\int_{X} \psi(u-v) \theta^{n}_{u} + \int_{X} \psi(u-v) \theta^{n}_{v} = 0.
\]
Thus we have $u = v$ almost everywhere with respect to $\theta^{n}_{u}$ and $\theta^{n}_{v}$. Using Lemma~\ref{lem: domination principle}, we get that $u = v$. 
\end{proof}

The symmetry of $I_{\psi}(u,v)$ follows from the fact that the weight function is an even function. This finishes the proof of the fact that $I_{\psi}$ is a quasi-metric on $\E_{\psi}(X,\theta,\phi)$. 

\section{Completeness} \label{sec: Completeness}

In this section we  finish the proof of Theorem~\ref{thm: complete metric space structure on prescribed singularity case} by showing that the quasi-metric is complete. 

We need the following lemma to construct a monotone sequence and show that it converges. 
\begin{thm}[Pythagorean Identity]\label{thm: pythagorean identity for max in prescribed singularity}
Let $\phi \in \PSH(X,\theta)$ and let $u,v \in \E_{\psi}(X,\theta, \phi)$. We know that $\max(u,v) \in \E_{\psi}(X,\theta, \phi)$. Moreover, they satisfy
\[
I_{\psi}(u,v) = I_{\psi}(\max(u,v),u) + I_{\psi}(\max(u,v),v).
\]
\end{thm}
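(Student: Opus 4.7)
The plan is to reduce the identity to the plurifine locality of the non-pluripolar Monge--Amp\`ere operator. Set $m := \max(u,v)$. Since $u,v \leq \phi + C$ for a suitable $C$, we have $m \in \PSH(X,\theta,\phi)$, and the second statement of Lemma~\ref{lem: the fundamental inequality} applied to $u \leq m$ yields $m \in \E_{\psi}(X,\theta,\phi)$. In particular, Corollary~\ref{cor: I psi is finite} makes both sides of the proposed identity finite.

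Next, decompose $X = \{u<v\}\sqcup\{u=v\}\sqcup\{u>v\}$. Using $\psi(0)=0$, the $\{u=v\}$ piece contributes nothing to any of the three integrals, so
\[
I_{\psi}(u,v) = \int_{\{u<v\}}\psi(v-u)(\theta^{n}_{u}+\theta^{n}_{v}) + \int_{\{u>v\}}\psi(u-v)(\theta^{n}_{u}+\theta^{n}_{v}).
\]
Since $m-u$ vanishes on $\{u\geq v\}$ and $m-v$ vanishes on $\{v\geq u\}$, the same partition also gives
\[
I_{\psi}(m,u) = \int_{\{u<v\}}\psi(v-u)(\theta^{n}_{m}+\theta^{n}_{u}), \qquad I_{\psi}(m,v) = \int_{\{u>v\}}\psi(u-v)(\theta^{n}_{m}+\theta^{n}_{v}).
\]

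The crucial input is the plurifine locality of non-pluripolar Monge--Amp\`ere products in the big class setting (see \cite{Boucksom2008MongeAmpreEI}): since $m=v$ on the plurifine open set $\{u<v\}$, one has $\mathbf{1}_{\{u<v\}}\theta^{n}_{m} = \mathbf{1}_{\{u<v\}}\theta^{n}_{v}$, and symmetrically $\mathbf{1}_{\{u>v\}}\theta^{n}_{m}=\mathbf{1}_{\{u>v\}}\theta^{n}_{u}$. Substituting these identifications into the expressions for $I_{\psi}(m,u)$ and $I_{\psi}(m,v)$ and summing, the four resulting integrals reproduce term by term the four pieces of $I_{\psi}(u,v)$, which yields the identity. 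The only real obstacle is confirming the locality statement in this generality, but it is a standard consequence of the construction of non-pluripolar products via canonical cutoffs, so no additional work is needed beyond invoking it.
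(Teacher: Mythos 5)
Your argument is correct and is essentially the same as the paper's: both proofs decompose over the sets $\{u<v\}$, $\{u=v\}$, $\{u>v\}$ (the middle set contributing nothing since $\psi(0)=0$) and invoke plurifine locality of the non-pluripolar Monge--Amp\`ere operator to replace $\theta^{n}_{\max(u,v)}$ by $\theta^{n}_{v}$ on $\{u<v\}$ and by $\theta^{n}_{u}$ on $\{u>v\}$. Your additional remarks on why $\max(u,v)\in\E_{\psi}(X,\theta,\phi)$ and why all integrals are finite are fine but not a departure from the paper's route.
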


\begin{proof}
\begin{align*}
    I_{\psi}(\max(u,v),u) &= \int_{X} \psi( \max(u,v) - u)(\theta^{n}_{\max(u,v)} + \theta^{n}_{u}) \\
    &= \int_{\{v> u\}} \psi(v-u) (\theta_{\max(u,v)}^{n} + \theta^{n}_{u}). 
    \intertext{Since $u \mapsto \theta^{n}_{u}$ is plurifine local, we have $\mathds{1}_{\{v >u\}} \theta^{n}_{\max(u,v)} = \mathds{1}_{\{ v > u\}}\theta^{n}_{v}$}
    &= \int_{\{v > u\}}\psi(v-u)(\theta^{n}_{v} + \theta^{n}_{u}).
    \intertext{Similarly, }
    I_{\psi}(\max(u,v), v) &= \int_{X}\psi(\max(u,v) - v)(\theta^{n}_{\max(u,v)} + \theta^{n}_{v})
    \intertext{and the same computation as before gives}
    &= \int_{\{u > v\}} \psi(u-v) (\theta^{n}_{u} + \theta^{n}_{v}).
    \intertext{Adding the two gives}
    I_{\psi}(\max(u,v) ,u) + I_{\psi}(\max(u,v),v) &= \int_{X}\psi(u-v)(\theta^{n}_{u} + \theta^{n}_{v}) \\
    &= I_{\psi}(u,v).
\end{align*}

\end{proof}

\begin{prop}\label{convergence with monotone sequences in prescribed singularity setting}
If $u_{j}, u \in \E_{\psi}(X,\theta, \phi)$ such that $u_{j} \searrow u$ or $u_{j} \nearrow u$, then 
\[
\int_{X}\psi(u_{j} - u) \theta^{n}_{u_{j}} \to 0.
\]
Consequently, by the the monotone convergence theorem $I_{\psi}(u_{j},u) \to 0$. 
\end{prop}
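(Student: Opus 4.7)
The plan is to split $I_\psi(u_j, u) = \int_X \psi(u_j - u)\theta^n_{u_j} + \int_X \psi(u_j - u)\theta^n_u$ and dispose of the second summand uniformly via the monotone convergence theorem: in both monotonicity cases, $\psi(u_j - u) \searrow 0$ pointwise (since $\psi$ is even), dominated by $\psi(u_1 - u) \in L^1(\theta^n_u)$, where integrability follows from subadditivity $\psi(u_1 - u) \leq \psi(u_1 - \phi) + \psi(u - \phi)$ together with Lemma \ref{lem: integrability}. Thus the substance of the proposition is to show $\int_X \psi(u_j - u)\theta^n_{u_j} \to 0$.

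For the decreasing case $u_j \searrow u$, I would translate so $u_1 \leq \phi$ and combine the layer-cake representation with the comparison principle. Since $u_j - s$ is $\theta$-psh with the same singularity type as $u_j$, it lies in $\E(X, \theta, \phi)$ with $\theta^n_{u_j - s} = \theta^n_{u_j}$, and Lemma \ref{lem: comparison principle} applied to the pair $(u,\, u_j - s)$ gives
\[
\theta^n_{u_j}(\{u_j - u > s\}) = \int_{\{u < u_j - s\}}\theta^n_{u_j - s} \leq \int_{\{u < u_j - s\}}\theta^n_u = \theta^n_u(\{u_j - u > s\}).
\]
Reassembling via layer cake yields $\int_X \psi(u_j - u)\theta^n_{u_j} \leq \int_X \psi(u_j - u)\theta^n_u \to 0$ by the same dominated-convergence reasoning.

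For the increasing case $u_j \nearrow u$ the comparison principle gives the wrong direction; this is the main obstacle. My strategy is to reduce to the decreasing case by passing through the bounded-singularity approximations $u_j^{(k)} := \max(u_j, \phi - k)$ and $u^{(k)} := \max(u, \phi - k)$, which share the singularity type of $\phi$ and so are bounded relative to $\phi$. For each fixed $k$, $u_j^{(k)} \nearrow u^{(k)}$ in capacity; since $\chi_j := \psi(u^{(k)} - u_j^{(k)})$ is uniformly bounded above by some $M_k$, Lemma \ref{thm: removing uniform boundedness assumption} applied to both $\chi_j$ and its bounded quasi-continuous complement $M_k - \chi_j$ upgrades one-sided lower semicontinuity into a genuine limit, yielding $\int_X \psi(u^{(k)} - u_j^{(k)})\theta^n_{u_j^{(k)}} \to 0$ as $j \to \infty$. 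For each fixed $j$, $u_j^{(k)} \searrow u_j$ and $u^{(k)} \searrow u$ as $k \to \infty$, so the decreasing case already handled gives $I_\psi(u_j, u_j^{(k)}), I_\psi(u^{(k)}, u) \to 0$. Two applications of the quasi-triangle inequality (Theorem \ref{thm : quasi-triangle inequality in the prescribed singularity case}) then combine these into
\[
I_\psi(u_j, u) \leq C^2\bigl(I_\psi(u_j, u_j^{(k)}) + I_\psi(u_j^{(k)}, u^{(k)}) + I_\psi(u^{(k)}, u)\bigr),
\]
and a diagonal choice $k = k(j) \to \infty$ drives each of the three terms to zero. The technical point to calibrate is that the bounded-case rate $I_\psi(u_j^{(k)}, u^{(k)}) \to 0$ is $k$-dependent, so $k(j)$ must grow slowly enough that $j$ exceeds the threshold required for that rate at level $k(j)$, while still tending to infinity so the remaining two decreasing-case terms also vanish.
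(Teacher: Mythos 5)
Your treatment of the decreasing case is correct and is genuinely different from (and cleaner than) the paper's: the observation that $u_j-s\in\E(X,\theta,\phi)$ with $\theta^n_{u_j-s}=\theta^n_{u_j}$, so that Lemma~\ref{lem: comparison principle} applied to the pair $(u,u_j-s)$ gives $\theta^n_{u_j}(\{u_j-u>s\})\le\theta^n_u(\{u_j-u>s\})$, transfers the whole integral onto the fixed measure $\theta^n_u$, where dominated convergence finishes. (The paper instead works on the truncated potentials with Dini's theorem and a capacity comparison $\CAP_{\theta,\phi}\le f(\CAP_\om)$; your route avoids that entirely for this half. The same comparison-principle trick appears in the paper's Lemma~\ref{lem: l1 convergence implies convergence of measures}.) Your fixed-$k$ bounded argument via the two-sided application of Lemma~\ref{thm: removing uniform boundedness assumption} to $\chi_j$ and $M_k-\chi_j$ is also fine, since all the truncations have the same total mass $\int_X\theta^n_\phi$.

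The gap is in the diagonal argument for the increasing case. Your three terms pull in opposite directions: $I_\psi(u_j^{(k)},u^{(k)})\to 0$ requires $j$ large \emph{for fixed} $k$, while $I_\psi(u_j,u_j^{(k)})\to 0$ requires $k$ large \emph{for fixed} $j$, with a rate that a priori depends on $j$. You calibrate $k(j)$ to grow slowly for the middle term, but then nothing guarantees $I_\psi(u_j,u_j^{(k(j))})\to 0$: the only convergence you have established for that term is the fixed-$j$ one. To close the diagonal you need the truncation error to be small \emph{uniformly in $j$}, i.e.\ $\sup_j I_\psi(u_j,u_j^{(k)})\to 0$ as $k\to\infty$. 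This does not follow from dominated convergence, because even after bounding the integrand by $\psi(u_1-\phi)\mathds{1}_{\{u_1\le\phi-k\}}$ the measure $\theta^n_{u_j}$ still varies with $j$; what is needed is uniform integrability of $\psi(u_1-\phi)$ against the family $\{\theta^n_{u_j}\}_j$. This is exactly where the paper invests its effort: it constructs an auxiliary weight $\tilde\psi\ge\psi$ with $\psi(t)/\tilde\psi(t)\searrow 0$ and $u_1\in\E_{\tilde\psi}(X,\theta,\phi)$, so that the tail is bounded by $\frac{\psi(k)}{\tilde\psi(k)}\int_X\tilde\psi(u_1-\phi)\,\theta^n_{u_j}$, which Lemmas~\ref{lem: integrability} and~\ref{lem: the fundamental inequality} control independently of $j$. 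Without this (or an equivalent uniform tail estimate), your increasing case does not close.
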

\begin{proof}
Without loss of generality, we can assume that $u_{j}, u \leq \phi \leq 0$. First, we assume that $\phi - L \leq u_{j}, u \leq \phi$. We also assume that $u_{j} \searrow u$ and the proof for $u_{j} \nearrow u$ is similar. Thus $0 \leq u_{j} - u \leq L$, which implies $0 \leq \psi(u_{j} - u) \leq \psi(L)$.  For some large enough $A$, $\theta \leq A\om$. Thus $\PSH(X,\theta) \subset \PSH(X,A\om)$, and we get that $u_{j}, u \in \PSH(X,A\om)$. Hence they are quasi-continuous (at least with respect to $\CAP_{\om}$). So we can find an open set $O$ such that $u_{j}, u$ are continuous on $X \setminus O$ and $\CAP_{\om}(O) < \ee$. The next part follows the proof in \cite[Corollary 2.12]{Darvas2017OnTS} adapted to our case. 
We have 
\[
\int_{X} \psi(u_{j} - u) \theta^{n}_{u_{j}} = \int_{X\setminus O} \psi(u_{j} - u) \theta^{n}_{u_{j}} + \int_{O} \psi(u_{j} -u) \theta^{n}_{u_{j}}.
\]
Since $X\setminus O$ is compact and $\psi(u_{j} -u)$ are continuous functions decreasing to $0$, by Dini's theorem, we get that the convergence is uniform. So for large enough $j$, $\psi(u_{j} - u) \leq \ee$ on $X\setminus O$. 

Moreover, 
\begin{align*}
    \int_{O} \psi(u_{j} - u) \theta^{n}_{u_{j}} &\leq \psi(L) \int_{O} \theta^{n}_{u_{j}} \\
    &\leq \psi(L) L^{n}\CAP_{\theta,\phi}(O) \\
    &\leq \psi(L)L^{n}f(\CAP_{\om}(O))\\
    &\leq \psi(L)L^{n} f(\ee).
\end{align*}
Here we used \cite[Theorem 1.1]{lucapacities} with $(\theta_{1}, \psi_{1}) = (\theta, \phi)$ and $(\theta_{2},\psi_{2}) = (\om, 0)$ and thus $\CAP_{\theta, \phi} \leq f(\CAP_{\om})$ where $f : \R^{+} \to \R^{+}$ is continuous and $f(0) = 0$.  Combining these two results we get that for large enough $j$,
\[
\int_{X}\psi(u_{j} - u)\theta^{n}_{u_{j}} \leq \ee \text{vol}(\theta^{n}) + \tilde{C} f(\ee). 
\]
Therefore, $\lim_{j\to \infty} \int_{X} \psi(u_{j} - u)\theta^{n}_{u_{j}} \to 0$. 

Now we remove the assumption that $\phi - L \leq u_{j} \leq \phi$. Let $u_{j}^{L} = \max\{ u_{j}, \phi - L\}$ and $u^{L} = \max\{ u , \phi - L\}$.
We want to show that 
\[
\left| \int_{X} \psi(u_{j} - u)\theta^{n}_{u_{j}} - \int_{X}\psi(u_{j}^{L} - u^{L})\theta^{n}_{u_{j}^{L}} \right| \to 0 \text{ as } L\to \infty
\]
uniformly with respect to $j$. Since $u_{j} \searrow u$, we have that $u_{j} \geq u$. On the set $\{ u > \phi-L\}$, we have $u^{L} = u$ and $u_{j}^{L} = u_{j}$. Using the fact that Monge-Amp\`ere measures are plurifine, we get that 
\[
\int_{\{u >\phi -L\}} \psi(u_{j} - u)\theta^{n}_{u_{j}} = \int_{\{ u > \phi-L\}} \psi(u_{j}^{L} - u^{L})\theta^{n}_{u_{j}^{L}}.
\]
Thus we need to show that 
\[
\left| \int_{ \{ u \leq \phi - L \}} \psi(u_{j} - u) \theta^{n}_{u_{j}} - \int_{\{ u \leq \phi - L\} } \psi(u_{j}^{L} - u^{L}) \theta^{n}_{u_{j}^{L}}\right| \to 0.
\]
We will show that both the terms go to 0 as $L \to \infty$ independent of $j$. 

Notice that since $u_{j} \geq u$ we have $u_{j} - u = (\phi - u) - (\phi - u_{j}) \leq \phi - u$. Since $\psi$ is increasing in $(0,\infty)$ we have $\psi(u_{j} - u) \leq \psi(\phi - u) = \psi(u - \phi)$. Now we can construct a weight $\tilde{\psi}$ such that $\tilde{\psi}$ is still even, $\tilde{\psi}(0) = 0$ and $\tilde{\psi}_{|(0,\infty)}$ is smooth increasing and concave such that $\psi(t) \leq \tilde{\psi}(t)$ and $\psi(t)/\tilde{\psi}(t) \searrow 0$ as $t \to \infty$. Moreover, we require that $u \in \E_{\tilde{\psi}}(X,\theta, \phi)$. Since $u_{j} \geq u$, we have $u_{j} \in \E_{\tilde{\psi}}(X,\theta, \phi)$ as well. Now
\begin{align*}
    \int_{\{ u \leq \phi - L\}}\psi(u_{j} - u)\theta^{n}_{u_{j}} &\leq \int_{\{ u \leq \phi - L\}} \psi(u - \phi) \theta^{n}_{u_{j}}  \\
    &\leq \frac{\psi(L)}{\tilde{\psi}(L)} \int_{\{u \leq \phi - L\}} \tilde{\psi}(u-\phi) \theta^{n}_{u_{j}}.
    \intertext{Since $u,u_{j} \in \E_{\tilde{\psi}}(X,\theta, \phi)$, Lemma~\ref{lem: integrability} tells us}
    &\leq 2\frac{\psi(L)}{\tilde{\psi}(L)}\left( \int_{X} \tilde{\psi}(u - \phi) \theta^{n}_{u} + \int_{X} \tilde{\psi}(u_{j} - \phi) \theta^{n}_{u_{j}}\right).
    \intertext{Using Lemma~\ref{lem: the fundamental inequality} and noticing that $u\leq u_{j} \leq \phi$ we get that }
    &\leq 2(2^{n+1}+1)\frac{\psi(L)}{\tilde{\psi}(L)} \int_{X} \tilde{\psi}(u-\phi)\theta^{n}_{u}.
\end{align*}
Since $u$ has finite $\tilde{\psi}$ energy, $\int_{X} \tilde{\psi}(u-\phi)\theta^{n}_{u}$ is finite and $\psi(L)/\tilde{\psi}(L) \to 0$ as $L \to \infty$ independent of $j$. The same proof shows that $\int_{\{ u\leq \phi - L\}} \psi(u_{j}^{L}  - u^{L}) \theta^{n}_{u_{j}^{L}} \to 0$ independent of $j$. 

We can also modify the proof to show that it works when $u_{j}\nearrow u$. 
\end{proof}

\begin{lem}\label{lem: decreasing cauchy sequence in prescribed singularity}
Let $u_{k} \in \E_{\psi}(X,\theta,\phi)$ be a decreasing  that  is $I_{\psi}$-Cauchy: for any $\ee > 0$, there exists $N$ such that $I_{\psi}(u_{j},u_{k}) \leq \ee$ for $j,k \geq N$. Then $\lim_{k\to\infty}u_{k}=:u \in \E_{\psi}(X,\theta,\phi)$ and $I_{\psi}(u_{k},u) \to 0$. 
\end{lem}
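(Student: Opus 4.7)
The plan is to identify $u := \lim_k u_k$, which exists pointwise as a decreasing limit (a priori possibly $\equiv -\infty$), to prove that $u \in \E_\psi(X,\theta,\phi)$, and then to invoke Proposition~\ref{convergence with monotone sequences in prescribed singularity setting} on the sequence $u_k \searrow u$ to deduce $I_\psi(u_k, u) \to 0$. Since $u_k \leq u_1 \leq \phi + C_1$ for a single constant $C_1$, I would first subtract $C_1$ from every $u_k$; this changes neither the Cauchy property nor the non-pluripolar Monge--Amp\`ere measures, so we may assume $u_k \leq \phi$ throughout.

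The first step is to extract a uniform energy bound from the Cauchy hypothesis via the quasi-triangle inequality (Theorem~\ref{thm : quasi-triangle inequality in the prescribed singularity case}). Note that $\phi \in \E_\psi(X,\theta,\phi)$ since $\psi(0) = 0$, so Corollary~\ref{cor: I psi is finite} guarantees that the cross-terms $I_\psi(u_N, u_1)$ and $I_\psi(u_N, \phi)$ are finite for each $N$. Choosing $N$ so that $I_\psi(u_k, u_N) \leq 1$ for $k \geq N$ and applying the quasi-triangle inequality twice yields
\[
\sup_k I_\psi(u_k, \phi) < \infty \quad \text{and} \quad \sup_k I_\psi(u_k, u_1) < \infty.
\]
In particular, $M := \sup_k \int_X \psi(u_k - \phi)\, \theta^n_{u_k} < \infty$ and $\sup_k \int_X \psi(u_1 - u_k)\, \theta^n_{u_1} < \infty$.

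The main obstacle is ruling out $u \equiv -\infty$. Suppose it held. Since $\{u_1 = -\infty\}$ is pluripolar while $\theta^n_{u_1}$ is non-pluripolar with $\theta^n_{u_1}(X) = \int_X \theta^n_\phi > 0$, on a set of full $\theta^n_{u_1}$-measure we would have $u_1 - u_k \nearrow +\infty$. The monotone convergence theorem together with $\psi(t) \to \infty$ as $t \to \infty$ would then force $\int_X \psi(u_1 - u_k)\, \theta^n_{u_1} \to +\infty$, contradicting the bound above. Hence $u \in \PSH(X,\theta)$, and since $u \leq u_1 \leq \phi$, in fact $u \in \PSH(X,\theta,\phi)$.

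With $u$ known to be an honest $\theta$-psh function, monotone convergence yields $u_k \to u$ in $L^1(\om^n)$, and combined with the uniform energy bound $M$, Lemma~\ref{lem: stability} places $u \in \E_\psi(X,\theta,\phi)$. Finally, Proposition~\ref{convergence with monotone sequences in prescribed singularity setting} applied to the decreasing sequence $u_k \searrow u$ in $\E_\psi(X,\theta,\phi)$ delivers $I_\psi(u_k, u) \to 0$, completing the proof.
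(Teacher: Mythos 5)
Your proof is correct and follows essentially the same route as the paper: a uniform bound on $I_{\psi}(u_{k},\phi)$ via the quasi-triangle inequality, ruling out $u\equiv-\infty$, then Lemma~\ref{lem: stability} and Proposition~\ref{convergence with monotone sequences in prescribed singularity setting}. The only (harmless, and arguably cleaner) deviations are that you obtain the uniform bound directly from the Cauchy property with a fixed index $N$ rather than extracting a subsequence with geometrically decaying gaps, and you exclude $u\equiv-\infty$ by letting $\int_{X}\psi(u_{1}-u_{k})\,\theta^{n}_{u_{1}}$ blow up instead of bounding $\int_{X}\psi(u-\phi)\,\theta^{n}_{\phi}$ by monotone convergence as the paper does.
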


\begin{proof}
Find a subsequence of $\{u_{k}\}$ and still denote it by $\{u_{k}\}$ such that 
\[
I_{\psi}(u_{k}, u_{k+1}) \leq C^{-2k}
\]
where $C$ is the same constant as in Theorem~\ref{thm : quasi-triangle inequality in the prescribed singularity case}. Let $u = \lim_{k} u_{k}$. We have to show $u_{k} \not\equiv -\infty$. We have $I_{\psi}(u_{k}, \phi)$ is bounded as $k \to \infty$. To see this, repeated application of Theorem~\ref{thm : quasi-triangle inequality in the prescribed singularity case} gives
\begin{align*}
    I_{\psi}(\phi, u_{k}) &\leq C(I_{\psi}(\phi, u_{1}) + I_{\psi}(u_{1}, u_{k})) \\
    &\leq CI_{\psi}(\phi, u_{1}) + C^{2}I_{\psi}(u_{1},u_{2}) + C^{2}I_{\psi}(u_{2}, u_{k}).
    \intertext{Doing this $k$ times we get}
    &\leq CI_{\psi}(\phi, u_{1}) + \sum_{j=2}^{k}C^{j}I_{\psi}(u_{j-1},u_{j}) \\
    &\leq CI_{\psi}(\phi, u_{1}) + \sum_{j=2}^{k} C^{j}C^{-2j+2} \\
    &\leq CI_{\psi}(\phi, u_{1}) +C^{2} \sum_{j=2}^{\infty}C^{-j} \\
    &= CI_{\psi}(\phi, u_{1}) + \frac{C}{C-1} = \tilde{C}.
\end{align*}
This gives us 
\[
\int_{X}\psi(u_{k} - \phi)\theta^{n}_{\phi} \leq I(\phi, u_{k}) \leq \tilde{C}.
\]
Without loss of generality we can assue that $u_{1} \leq \phi$ so $u_{k} \leq u_{1} \leq \phi$. Since $u_{k}$ is decreasing, we have $u_{k} - \phi$ is decreasing, thus $\psi(u_{k} - \phi)$ is increasing. Applying monotone convergence theorem we get that 
\[
\int_{X}\psi(u-\phi)\theta^{n}_{\phi} \leq \tilde{C}.
\]
Thus $u \not\equiv -\infty$. Hence $u \in \PSH(X,\theta)$. Since $u_{j} \searrow u$ and 
\[
\int_{X}\psi(u_{k} - \phi)\theta^{n}_{u_{k}} \leq I_{\psi}(u_{k}, \phi) \leq \tilde{C}
\]
using Lemma~\ref{lem: stability}      we get that $u \in \E_{\psi}(X,\theta)$. Now we need to show that $I_{\psi}(u_{k},u) \to 0$. This means 
\[
\int_{X}\psi(u_{k} - u) \theta^{n}_{u} + \int_{X}\psi(u_{k}- u)\theta^{n}_{u_{k}} \to 0.
\]
For the first integral, we notice that $u_{k} \geq u$, thus $u_{k} - u \geq 0$ and $u_{k} - u \searrow 0$ and thus $\psi(u_{k} - u) \searrow 0$. Thus we can apply the monotone convergence theorem to get 
\[
\int_{X}\psi(u_{k} - u) \theta^{n}_{u} \to 0.
\]
To show that 
\[
\int_{X} \psi(u_{k} - u)\theta^{n}_{u_{k}} \to 0
\]
we use Proposition~\ref{convergence with monotone sequences in prescribed singularity setting}. Thus $I_{\psi}(u_{k},u) \to 0$. 
\end{proof}

\begin{lem}\label{uniformly bounded from above in prescribed singularity case}
Let $\{u_{k}\} \in \E_{\psi}(X,\theta, \phi)$ be such that 
\[
I_{\psi}(u_{k},u_{k+1}) \leq C^{-2k}.
\]
Then $u_{k}$ are uniformly bounded from above.
\end{lem}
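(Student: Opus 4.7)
The plan is to first upgrade the Cauchy-with-exponential-decay hypothesis to a uniform bound $I_\psi(u_k,\phi)\leq\tilde C$, and then to argue by contradiction: if $\sup_X u_k$ were unbounded along a subsequence, a tightness estimate for the normalized sequence against the non-pluripolar measure $\theta_\phi^n$ would force $\int_X\psi(u_k-\phi)\,\theta_\phi^n$ to blow up.

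First I would iterate the quasi-triangle inequality of Theorem~\ref{thm : quasi-triangle inequality in the prescribed singularity case} exactly as in the first paragraph of the proof of Lemma~\ref{lem: decreasing cauchy sequence in prescribed singularity}; the geometric-series telescoping there never uses monotonicity of the sequence. This yields a constant $\tilde C$ depending only on the quasi-triangle constant $C$ and on $I_\psi(u_1,\phi)$ such that $I_\psi(u_k,\phi)\leq\tilde C$ for every $k$, and in particular $\int_X\psi(u_k-\phi)\,\theta_\phi^n\leq\tilde C$ for every $k$.

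Next, suppose for contradiction that $a_k := \sup_X u_k\to+\infty$ along a subsequence, and set $v_k := u_k-a_k\in\PSH(X,\theta)$, so that $\sup_X v_k=0$. The key technical input is the uniform tightness
\[
\theta_\phi^n(\{v_k<-N\})\longrightarrow 0\quad\text{as }N\to\infty,\text{ uniformly in }k,
\]
which I would deduce from the chain: (i) the classical capacity estimate $\CAP_\om(\{v<-N\})\leq A/N$ holding uniformly over $\{v\in\PSH(X,\theta):\sup_X v=0\}$; (ii) the comparison $\CAP_{\theta,\phi}\leq f(\CAP_\om)$ of \cite[Theorem 1.1]{lucapacities} already invoked in the proof of Proposition~\ref{convergence with monotone sequences in prescribed singularity setting}; and (iii) the fact that a non-pluripolar positive measure of finite mass such as $\theta_\phi^n$ is absolutely continuous with respect to $\CAP_{\theta,\phi}$ in the uniform sense that for every $\eta>0$ there exists $\delta>0$ with $\CAP_{\theta,\phi}(E)<\delta\implies\theta_\phi^n(E)<\eta$. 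Fixing $N$ large enough that $\theta_\phi^n(E_k)\geq\tfrac12\int_X\theta_\phi^n$ where $E_k:=\{v_k\geq-N\}$, and taking $k$ large enough that $a_k\geq 2(N+\sup_X\phi)$, on $E_k$ we have $u_k-\phi\geq a_k/2$, so that
\[
\int_X\psi(u_k-\phi)\,\theta_\phi^n \;\geq\; \psi\!\left(\tfrac{a_k}{2}\right)\theta_\phi^n(E_k) \;\geq\; \tfrac12\!\left(\int_X\theta_\phi^n\right)\psi\!\left(\tfrac{a_k}{2}\right) \;\longrightarrow\; +\infty,
\]
contradicting the uniform bound from the first step.

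The main obstacle is the tightness estimate above: transferring a uniform $L^1(\om^n)$-type control of the normalized family $\{v_k\}$ into a quantitative smallness against $\theta_\phi^n$, a measure that can be singular with respect to $\om^n$. This is precisely where the capacity comparison of \cite{lucapacities} and the non-pluripolarity of $\theta_\phi^n$ together do the nontrivial work.
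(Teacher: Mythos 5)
Your argument is correct, but it takes a genuinely different route from the paper's. You both begin by iterating the quasi-triangle inequality to extract a uniform bound of the form $I_{\psi}(\cdot,\phi)\leq\tilde C$, but the paper applies it to the running maxima $v_{k}=\max\{u_{1},\dots,u_{k}\}$ (via the Pythagorean identity, Theorem~\ref{thm: pythagorean identity for max in prescribed singularity}), passes to the increasing limit $w=\lim_k\max\{\phi,v_k\}$ to produce a non-pluripolar set $K$ on which the $v_{k}$ are uniformly bounded, and then invokes compactness of the normalized family $\mathcal{F}_{K}\subset\PSH(X,A\om)$ from \cite[Theorem 4.7]{Guedj2004IntrinsicCO}. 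You instead bound $I_{\psi}(u_{k},\phi)$ directly (the telescoping indeed never uses monotonicity, so this is legitimate) and run a contradiction via a uniform tightness estimate for the sup-normalized sequence against $\theta_{\phi}^{n}$. Your route avoids the Pythagorean identity and the $L^{1}$-compactness theorem entirely, at the cost of the standard uniform decay $\CAP_{\om}(\{v<-N\})\lesssim 1/N$ for sup-normalized quasi-psh functions plus the capacity comparison of \cite{lucapacities}, which the paper already uses in Proposition~\ref{convergence with monotone sequences in prescribed singularity setting}; both are legitimate inputs. One justification should be repaired: your step (iii) appeals to a general principle that a non-pluripolar finite measure is \emph{uniformly} absolutely continuous with respect to capacity, which is not true for arbitrary measures vanishing on pluripolar sets (that property is strictly stronger). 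For the specific measure $\theta_{\phi}^{n}$ no such principle is needed: since $\phi$ itself is a competitor in the definition of $\CAP_{\theta,\phi}$ (as $\phi-1\leq\phi\leq\phi$), one has the trivial domination $\theta_{\phi}^{n}(E)\leq\CAP_{\theta,\phi}(E)$ for every Borel $E$, which is exactly what your chain of estimates requires. With that substitution the proof is complete.
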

\begin{proof}
Let 
\[
v_{k} = \max\{u_{1}, \dots, u_{k}\}.
\]
Then repeated application of Theorem~\ref{thm : quasi-triangle inequality in the prescribed singularity case} and~\ref{thm: pythagorean identity for max in prescribed singularity} gives
\begin{align*}
    I_{\psi}(\phi,v_{k}) &\leq C I_{\psi}(\phi,u_{1})  + C I_{\psi}(u_{1}, \max\{ u_{1}, \dots, u_{k}\}) \\
    &\leq C I_{\psi}(\phi,u_{1}) + C I_{\psi}(u_{1}, \max\{u_{2}, \dots, u_{k}\}) \\
    &\leq C I_{\psi}(\phi,u_{1}) + C^{2} I_{\psi}(u_{1},u_{2}) + C^{2} I_{\psi}(u_{2}, \max\{u_{2}, \dots, u_{k}\}) \\
    &\leq C I_{\psi}(\phi,u_{1}) + C^{2} I_{\psi}(u_{1},u_{2})  + C^{2} I_{\psi}(u_{2}, \max\{u_{3}, \dots, u_{k}\}).
    \intertext{Doing this $k$ times we get}
    &\leq C I_{\psi}(\phi,u_{1}) + C^{2} I_{\psi}(u_{1}, u_{2}) + \dots + C^{k}I_{\psi}(u_{k-1},u_{k}) \\
    &\leq \tilde{C}.
\end{align*}
Therefore, 
\[
\int_{X}\psi(v_{k} - \phi)\theta^{n}_{\phi} \leq \tilde{C}.
\]
Let $w_{k} = \max\{\phi, v_{k}\} \in \E_{\psi}(X,\theta)$. Then $w_{k} - \phi \geq 0$ and as $v_{k}$ is pointwise increasing, we get that $w_{k}-\phi$ is pointwise increasing. As $\psi$ is increasing on $[0,\infty)$, we get that $\psi(w_{k} - \phi)$ is pointwise increasing. Let $w := \lim_{k\to\infty}w_{k}$. Then $\psi(w-\phi) = \lim_{k\to \infty}\nearrow \psi(w_{k} - \phi)$. The monotone convergence theorem implies
\[
\int_{X} \psi(w - \phi) \theta^{n}_{\phi} = \lim_{k\to\infty} \int_{X} \psi(w_{k} - \phi)\theta^{n}_{\phi} \leq \int_{X} \psi(v_{k} - \phi)\theta^{n}_{\phi}\leq \tilde{C}.
\]
Thus the set $K = \{ \psi(w - \phi) \leq (\tilde{C} + 1)/\int_{X}\theta^{n}_{\phi}\}$ has positive $\theta^{n}_{\phi}$-measure. Thus $K$ is not $\PSH(X,\om)$-polar.  We can find a constant $A$ such that $\theta \leq A\om$. Then  $\PSH(X,\theta) \subset \PSH(X,A\om)$.  

Consider the set 
\[
\mathcal{F}_{K} = \{ \varphi \in \PSH(X, A\om) : \sup_{K}\varphi  = 0\}.
\]
Since $K$ is not $\PSH(X,A\om)$-polar, we know that $\mathcal{F}_{K}$ is compact in $L^{1}(\om^{n})$-topology. This follows from \cite[Theorem 4.7]{Guedj2004IntrinsicCO}. Let $\al_{k} = \sup_{K}v_{k}$. Let $e$ be such that $\psi(e) = (\tilde{C}  + 1)/\int_{X}\theta^{n}_{\phi}$. Then $K = \{w - \phi \leq e\}$. Notice that $\phi \leq 0$ gives
\[
\al_{k} = \sup_{K} v_{k} \leq \sup_{K}(v_{k} - \phi) \leq \sup_{K}(w - \phi) \leq e.
\]
Define $\tilde{v}_{k} = v_{k} - \al_{k}$. Then $\tilde{v}_{k} \in \PSH(X,\theta) \subset \PSH(X,A\om)$. Since $\sup_{K}\tilde{v}_{k} = 0$, we have $\tilde{v}_{k} \in \mathcal{F}_{K}$. Since $\mathcal{F}_{K}$ is relatively compact, we have $\{\tilde{v}_{k}\}$ is relatively compact in $L^{1}(X)$ topology and hence uniformly bounded from above. Therefore $v_{k} = \tilde{v}_{k} + \al_{k}$ are uniformly bounded from above and hence $u_{k} \leq v_{k}$ are uniformly bounded from above.
\end{proof}

The following theorem is the last step in proving completeness of the quasi-metric. 
\begin{thm}\label{thm: completeness in prescribed singularity setting}
Let $\{u_{j}\} \in \E_{\psi}(X,\theta, \phi)$ be an $I_{\psi}$-Cauchy sequence. Then there exists $u \in \E_{\psi}(X,\theta, \phi)$ such that $I_{\psi}(u_{j}, u) \to 0$.
\end{thm}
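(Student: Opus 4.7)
The plan is to extract a subsequential $L^1$-limit $u^\star \in \E_\psi(X,\theta,\phi)$ of the Cauchy sequence, upgrade it to convergence in capacity, and then use lower semicontinuity of $I_\psi$ (via Lemma~\ref{thm: removing uniform boundedness assumption}) together with the Cauchy property to conclude. This avoids any direct telescoping estimate between $u_k$ and a decreasing envelope like $(\sup_{j \geq k} u_j)^*$, which one would have expected to face but which is made delicate by the fact that iterating the quasi-triangle inequality inflates the constant $C$ at every step.

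First, extract a subsequence, still denoted $\{u_j\}$, with $I_\psi(u_j,u_{j+1}) \leq C^{-2j}$ for the quasi-triangle constant $C$ of Theorem~\ref{thm : quasi-triangle inequality in the prescribed singularity case}. Lemma~\ref{uniformly bounded from above in prescribed singularity case} gives a uniform upper bound $\sup_X u_j \leq A$, and iterating the quasi-triangle inequality exactly as at the start of the proof of Lemma~\ref{lem: decreasing cauchy sequence in prescribed singularity} gives $I_\psi(u_j,\phi) \leq \tilde{C}$, hence $\int_X \psi(u_j-\phi)\theta_{u_j}^n \leq \tilde{C}$ uniformly in $j$. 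The full-mass condition $\int_X \theta_{u_j}^n = \int_X \theta_\phi^n > 0$, combined with this uniform energy bound, rules out the uniform collapse $u_j \to -\infty$. Consequently $\{u_j\}$ is $L^1(X,\om^n)$-precompact; extract a subsequence $u_{j_\ell} \to u^\star$ in $L^1(\om^n)$ with $u^\star \in \PSH(X,\theta)$, and apply Lemma~\ref{lem: stability} to conclude $u^\star \in \E_\psi(X,\theta,\phi)$. Standard $L^1$-to-capacity arguments for $\theta$-psh sequences with uniform upper bound then give $u_{j_\ell} \to u^\star$ in capacity after passing to a further subsequence.

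Now invoke Lemma~\ref{thm: removing uniform boundedness assumption} with $\chi_\ell := \psi(u_k - u_{j_\ell})$, which is non-negative, quasi-continuous, and converges in capacity to $\psi(u_k - u^\star)$ by the continuity of $\psi$. Applying the lemma both against the fixed Monge-Amp\`ere measure $\theta_{u_k}^n$ and against the varying $\theta_{u_{j_\ell}}^n$ (admissible via $u_{j_\ell} \to u^\star$ in capacity), and summing the two resulting inequalities, yields
\[
I_\psi(u_k, u^\star) \;\leq\; \liminf_{\ell \to \infty} I_\psi(u_k, u_{j_\ell}).
\]
Given any $\ee > 0$, the $I_\psi$-Cauchy property provides $N$ with $I_\psi(u_k, u_{j_\ell}) \leq \ee$ whenever $k, j_\ell \geq N$, hence $I_\psi(u_k, u^\star) \leq \ee$ for all $k \geq N$. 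Thus $I_\psi(u_k, u^\star) \to 0$, as required.

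The principal obstacle is promoting $L^1$ convergence of $u_{j_\ell}$ to convergence in capacity in the prescribed-singularity, big-cohomology setting, where the potentials need not be uniformly bounded below. In the uniformly bounded case this is classical, and in general one can proceed via the truncations $u_{j_\ell}^M := \max(u_{j_\ell}, \phi - M)$, which reduce the problem to a relatively bounded situation to which standard capacity arguments apply, and then passing $M \to \infty$ while controlling the tails using the fundamental inequality Lemma~\ref{lem: the fundamental inequality} and the uniform energy estimate.
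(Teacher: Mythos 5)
Your overall architecture (extract an $L^1$-limit candidate, upgrade to capacity convergence, then use semicontinuity of $I_\psi$ against the Cauchy property) would work if each step held, and the final semicontinuity step via Lemma~\ref{thm: removing uniform boundedness assumption} is a clean way to close the argument. But there is a genuine gap at the pivotal step: the claim that ``standard $L^1$-to-capacity arguments for $\theta$-psh sequences with uniform upper bound give $u_{j_\ell}\to u^\star$ in capacity after passing to a further subsequence'' is false. $L^1(\om^n)$ convergence of quasi-psh functions, even uniformly bounded ones, does not imply convergence in capacity, not even along a subsequence. The classical counterexample is $u_j=\frac{1}{j}\log|z^j-1|$ (suitably globalized), which converges in $L^1$ to $\log^+|z|$ while the sets $\{u_j<-\ee\}$ contain sublevel sets of the polynomials $z^j-1$ whose capacity stays bounded away from zero. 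Your proposed repair in the last paragraph, truncating to $\max(u_{j_\ell},\phi-M)$, only addresses unboundedness from below; it does nothing against this failure, which already occurs for bounded potentials. Note also that the paper's Theorem~\ref{thm: convergence in capacity in prescribed singularity case} (that $I_\psi$-convergence implies capacity convergence) is not available to you here, since it is proved downstream of completeness via the sandwiching construction.

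This obstruction is precisely why the paper's proof does not take your route. Instead it replaces $u_j$ by the decreasing envelopes $v_j=(\sup_{k\geq j}u_k)^*$, for which convergence in capacity to the limit is automatic (monotone convergence of quasi-psh functions), handles decreasing Cauchy sequences separately in Lemma~\ref{lem: decreasing cauchy sequence in prescribed singularity}, and then does the telescoping estimates with the Pythagorean identity (Theorem~\ref{thm: pythagorean identity for max in prescribed singularity}) to show $I_\psi(u_j,v_j)\to 0$ --- exactly the telescoping you hoped to avoid. The geometric decay $I_\psi(u_j,u_{j+1})\leq C^{-2j}$ is chosen so that the constant inflation $C^k$ from iterating the quasi-triangle inequality is absorbed, so that part is not actually delicate. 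A secondary, more minor point: your assertion that the uniform energy bound plus full mass ``rules out uniform collapse'' needs the quantitative argument from Lemma~\ref{lem: decreasing sequence has finite energy in prescribed singularity case} (namely $\int_X\psi(u_j-\phi)\theta^n_\phi\geq\psi(j-A)\,\theta^n_\phi\{\phi>-A\}$ if $u_j<\phi-j$), since $\phi$ itself may be unbounded; as stated it is under-justified but repairable.
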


\begin{proof}
Extract a subsequence and still denote it by $\{u_{j}\}$ such that 
\[
I_{\psi}(u_{j}, u_{j+1}) \leq C^{-2j}
\]
where $C$ is the same constant as in Theorem~\ref{thm : quasi-triangle inequality in the prescribed singularity case}. By Lemma~\ref{uniformly bounded from above in prescribed singularity case} $\{u_{j}\}$ are uniformly bounded from above. Thus 
\[
v_{j} := (\sup_{k\geq j} u_{k})^{*} \in \PSH(X,\theta).
\]
Since $u_{j} \leq v_{j}$ we  know using Lemma~\ref{lem: the fundamental inequality} that $v_{j} \in \E_{\psi}(X,\theta, \phi)$. Define
\[
v_{j}^{l} := \max\{ u_{j}, u_{j+1}, \dots, u_{j+l}\}.
\]
Then $v_{j}^{l} \in \E_{\psi}(X,\theta, \phi)$ and $v_{j}^{l} \nearrow v_{j}$ almost everywhere. Lemma~\ref{lem: stability} implies $I_{\psi}(v_{j}^{l}, v_{j}) \to 0$ as $l \to \infty$. Using quasi-triangle inequality twice we get that $I_{\psi}(v_{j}, v_{j+1}) \leq C^{2} \lim_{l \to \infty}I_{\psi}(v_{j}^{l+1}, v_{j+1}^{l})$. Now, 
\begin{align*}
    I_{\psi}(v_{j}^{l+1}, v_{j+1}^{l}) &= I_{\psi}(\max\{u_{j}, v_{j+1}^{l}\}, v_{j+1}^{l}). 
    \intertext{Using Theroem~\ref{thm: pythagorean identity for max in prescribed singularity}, we get}
    &\leq I_{\psi}(u_{j}, v_{j+1}^{l}). 
    \intertext{Using Theorem~\ref{thm : quasi-triangle inequality in the prescribed singularity case}, we get}
    &\leq CI_{\psi}(u_{j}, u_{j+1})+  C I_{\psi}(u_{j+1}, \max\{u_{j+1}, v_{j+2}^{l-1}\}). 
    \intertext{This again by Theorem~\ref{thm: pythagorean identity for max in prescribed singularity} gives us}
    &\leq CI_{\psi}(u_{j},u_{j+1}) + CI_{\psi}(u_{j+1}, v_{j+2}^{l-1}).
    \intertext{Applying this $l$ times we get}
    &\leq \sum_{k=1}^{l}C^{k}I_{\psi}(u_{j+k-1}, u_{j+k}) \\
    &\leq \sum_{k=1}^{l}C^{k}C^{-2j-2k+2} \\
    &\leq \sum_{k=1}^{\infty}C^{-2j-k+2}\\
    &\leq C^{-2j+2}\frac{1}{C-1} = C^{-2j}\frac{C^{2}}{C-1}.
\end{align*}
Thus we obtain that
\[
I_{\psi}(v_{j}, v_{j+1}) \leq C^{-2j} \frac{C^{4}}{C-1}.
\]
This shows that $\{v_{j}\}$ is a decreasing $I_{\psi}$-Cauchy sequence. Thus by Lemma~\ref{lem: decreasing cauchy sequence in prescribed singularity}, we get that there exists $v \in \E_{\psi}(X,\theta,\phi)$ such that $I_{\psi}(v_{j}, v) \to 0$.  

Now we want to show that $\{u_{j}\}$ and $\{v_{j}\}$ are equivalent sequences, i.e., $I_{\psi}(u_{j},v_{j}) \to 0$. Then $I_{\psi}(u_{j},v) \leq CI_{\psi}(u_{j},v_{j}) + CI_{\psi}(v_{j},v)$. As both the terms go to 0, we get that $I_{\psi}(u_{j},v) \to 0$. 

Now, 
\begin{align*}
    I_{\psi}(u_{j}, v_{j}^{l}) &= I_{\psi}(u_{j}, \max\{ u_{j}, v_{j+1}^{l-1}\}) \\
    &\leq I_{\psi}(u_{j}, v_{j+1}^{l-1}) \\
    &\leq CI_{\psi}(u_{j}, u_{j+1}) + CI_{\psi}(u_{j+1}, v_{j+1}^{l-1}) \\
    &\leq CI_{\psi}(u_{j},u_{j+1}) + CI_{\psi}(u_{j+1}, v_{j+2}^{l-2}) \\
    &\leq CI_{\psi}(u_{j},u_{j+1}) + C^{2}I_{\psi}(u_{j+1},u_{j+2}) + C^{2}I_{\psi}(u_{j+2}, v_{j+2}^{l-2}).
    \intertext{Doing this $l$ times we get }
    &\leq CI_{\psi}(u_{j},u_{j+1}) + C^{2} I_{\psi}(u_{j+1},u_{j+2}) + \dots C^{l}I_{\psi}(u_{j+l-1}, u_{j+l}) \\
    &\leq \sum_{k=1}^{l} C^{k}C^{-2k-2j+2} \\
    &\leq C^{-2j} \frac{C^{2}}{C-1}.
\end{align*}
Thus 
\[
I_{\psi}(u_{j}, v_{j}) \leq C I_{\psi}(u_{j},v_{j}^{l}) + C I_{\psi}(v_{j}^{l},v_{j}) \leq C^{-2j}\frac{C^{3}}{C-1}
\]
by taking limit $l \to \infty$. Thus $I_{\psi}(u_{j},v_{j}) \to 0$. Hence we have found $ v \in \E_{\psi}(X,\theta,\phi)$ such that $I_{\psi}(u_{j}, v) \to 0$. Thus $(\E_{\psi}(X,\theta,\phi), I_{\psi})$ is a completely metrizable topological space when topologized with the quasi-metric $I_{\psi}$. 
\end{proof}

This finishes the proof for completeness for of the quasi-metric $I_{\psi}$ on the space $\E_{\psi}(X,\theta,\phi)$.

\section{Properties of the new topology } \label{sec: properties of the new topology} 

The following lemma generalizes  \cite[Lemma 1.5]{Guedj2019PlurisubharmonicEA} and Lemma~\ref{lem: stability}.
\begin{lem}\label{lem: decreasing sequence has finite energy in prescribed singularity case}
Let $u_{j} \in \E_{\psi}(X,\theta,\phi)$ be a decreasing sequence and $\varphi \in \E_{\psi}(X,\theta, \phi)$ be such that
\[
\sup_{j \in \N} \int_{X} \psi(u_{j} - \varphi) \theta^{n}_{u_{j}} < \infty.
\]
Then $u = \lim_{j} u_{j} \in \E_{\psi}(X,\theta,\phi)$. 
\end{lem}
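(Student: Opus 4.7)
The plan is to reduce the statement to the stability result Lemma~\ref{lem: stability} by establishing the uniform bound
\[
\sup_{j}\int_{X}\psi(u_{j}-\phi)\theta^{n}_{u_{j}} < \infty,
\]
after which the claim follows from the same argument as in the proof of Lemma~\ref{lem: stability}. The reduction itself proceeds via the quasi-triangle inequality.

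After subtracting a constant we may assume $u_{j}, \varphi \leq \phi$. Since $u_{j}, \varphi, \phi \in \E_{\psi}(X,\theta,\phi)$, Theorem~\ref{thm : quasi-triangle inequality in the prescribed singularity case} gives
\[
I_{\psi}(u_{j},\phi) \leq C_{0}\bigl(I_{\psi}(u_{j},\varphi)+I_{\psi}(\varphi,\phi)\bigr),
\]
and $I_{\psi}(\varphi,\phi) < \infty$ by Corollary~\ref{cor: I psi is finite}. So it suffices to bound $I_{\psi}(u_{j},\varphi) = \int_{X}\psi(u_{j}-\varphi)\theta^{n}_{u_{j}} + \int_{X}\psi(u_{j}-\varphi)\theta^{n}_{\varphi}$ uniformly in $j$. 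The first integral is bounded by hypothesis. For the second, I apply the layer cake formula and split $\{|u_{j}-\varphi|>t\} = \{u_{j}>\varphi+t\}\cup\{u_{j}<\varphi-t\}$. Since $u_{j}\leq \phi$, the inclusion $\{u_{j}>\varphi+t\}\subset\{\varphi<\phi-t\}$ gives
\[
\int_{0}^{\infty}\psi'(t)\,\theta^{n}_{\varphi}\{u_{j}>\varphi+t\}\,dt \leq \int_{X}\psi(\varphi-\phi)\theta^{n}_{\varphi} < \infty.
\]
For the lower piece, the key observation is that $u_{j}+t \in \E(X,\theta,\phi)$ for each $t \geq 0$ (the shift preserves being $\theta$-psh, relative singularity with respect to $\phi$, and the non-pluripolar mass), so the comparison principle (Lemma~\ref{lem: comparison principle}) applied to $u_{j}+t$ and $\varphi$ on the set $\{u_{j}+t<\varphi\}$ yields
\[
\theta^{n}_{\varphi}\{u_{j}<\varphi-t\} \leq \theta^{n}_{u_{j}+t}\{u_{j}+t<\varphi\} = \theta^{n}_{u_{j}}\{u_{j}<\varphi-t\}.
\]
Integrating against $\psi'(t)$ and running the layer cake in reverse converts this into the inequality $\int_{X}\psi((\varphi-u_{j})_{+})\theta^{n}_{u_{j}} \leq \int_{X}\psi(u_{j}-\varphi)\theta^{n}_{u_{j}}$, which is bounded by hypothesis.

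Once $\sup_{j}\int_{X}\psi(u_{j}-\phi)\theta^{n}_{u_{j}} < \infty$ is secured, Lemma~\ref{lem: integrability} yields the analogous uniform bound on $\int_{X}\psi(u_{j}-\phi)\theta^{n}_{\phi}$. Markov's inequality combined with the fact that $\theta^{n}_{\phi}$ does not charge the pluripolar set $\{\phi=-\infty\}$ then forces $\sup_{X}u_{j}$ to be bounded below uniformly in $j$, and a standard $\theta$-psh comparison then makes $\int_{X}u_{j}\,\om^{n}$ uniformly bounded below. Hence $u_{j}\to u$ in $L^{1}(\om^{n})$ with $u\in\PSH(X,\theta,\phi)$, and Lemma~\ref{lem: stability} gives $u \in \E_{\psi}(X,\theta,\phi)$.

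The main obstacle is the comparison step for the lower piece: trading a $\theta^{n}_{\varphi}$ integral on $\{u_{j}<\varphi-t\}$ for a $\theta^{n}_{u_{j}}$ integral is the crucial move that breaks the apparent circularity of bounding $\int\psi(u_{j}-\phi)\theta^{n}_{u_{j}}$ in terms of $\int\psi(u_{j}-\varphi)\theta^{n}_{u_{j}}$, and it works only because the shifted function $u_{j}+t$ remains $\theta$-psh and lies in the relative full mass class, so that Lemma~\ref{lem: comparison principle} applies directly to the pair $(u_{j}+t,\varphi)$.
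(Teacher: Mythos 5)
Your proof is correct, and its skeleton is the same as the paper's: bound $I_{\psi}(u_{j},\varphi)$ uniformly (the $\theta^{n}_{u_{j}}$ piece by hypothesis, the lower part of the $\theta^{n}_{\varphi}$ piece by the comparison principle applied to $u_{j}+t$ versus $\varphi$ — exactly the move the paper makes — and the upper part by a trivial inclusion), then transfer this to $\sup_{j}\int_{X}\psi(u_{j}-\phi)\theta^{n}_{u_{j}}<\infty$ via the quasi-triangle inequality, and conclude with Lemma~\ref{lem: stability}. The only genuine divergence is how you rule out $u\equiv-\infty$, which must be checked before Lemma~\ref{lem: stability} applies: the paper argues by contradiction directly from the hypothesis, showing that $u_{j}<-j$ would force $\int_{X}\psi(u_{j}-\varphi)\theta^{n}_{u_{j}}\geq\theta^{n}_{\varphi}\{\varphi>-A\}\,\psi(j-A)\to\infty$ (this self-contained lower bound is reused elsewhere, e.g.\ in the proof of Theorem~\ref{thm: operator P is stable on E psi}), whereas you first secure the uniform energy bound and then use Markov's inequality plus the non-pluripolarity of $\theta^{n}_{\phi}$ to bound $\sup_{X}u_{j}$ from below. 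Your route is valid but logically downstream of the energy estimate; the paper's is independent of it, which is what makes it quotable as a standalone fact. For the upper piece you use $u_{j}\leq\phi$ where the paper uses $u_{j}\leq u_{1}$; this is immaterial.
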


\begin{proof}
First we show that $u_{j} \not \to -\infty$ uniformly. If, on the contrary, $u \to \infty$ uniformly, up to choosing a subsequence and relabeling, we can assume that $u_{j} < -j$.  Choose an $A$ such that $\theta^{n}_{\varphi}\{ \varphi > -A\} > 0$. Now
\begin{align*}
    \int_{X}\psi(u_{j} -\varphi) \theta^{n}_{u_{j}} &= \int_{0}^{\infty}\theta^{n}_{u_{j}} \{ \psi(u_{j} - \varphi) > t\} dt \\
    &= \int_{0}^{\infty} \psi'(s) \theta^{n}_{u_{j}} \{ |u_{j} - \varphi| > s\} ds \\
    &= \int_{0}^{\infty} \psi'(s) \theta^{n}_{u_{j}} \{ \varphi < u_{j} - s\} ds \\ &+ \int_{0}^{\infty} \psi'(s) \theta^{n}_{u_{j}} \{ u_{j} < \varphi - s\}ds.
    \intertext{Using comparison principle and the fact that $\psi'(s) \geq 0$, we get }
    &\geq \int_{0}^{\infty} \psi'(s) \theta_{\varphi} \{ u_{j} < \varphi - s\}ds. 
    \intertext{Since $u_{j} < -j$, we get that $\{ -j < \varphi - s\} \subset \{ u_{j} < \varphi - s\}$. Thus,}
    &\geq \int_{0}^{\infty} \psi'(s) \theta^{n}_{\varphi} \{ -j < \varphi -s \}ds. 
    \intertext{Choosing $j > A$, we can write}
    &\geq \int_{0}^{j - A} \psi'(s) \theta^{n}_{\varphi} \{ \varphi > s - j\}ds. \intertext{Again, we notice that $s < j- A$ implies that $\{ \varphi > -A \} \subset \{ \varphi > s -j \}$, which gives}
    &\geq \int_{0}^{j-A} \psi'(s)\theta^{n}_{\varphi}\{ \varphi > -A\} ds \\
    &= \theta^{n}_{\varphi} \{ \varphi > -A \} \psi(j-A) \to \infty
\end{align*}
as $j \to \infty$.
This is a contradiction to the fact that $\int_{X}\psi(u_{j} -\varphi)\theta^{n}_{u_{j}}$ is bounded in $j$. 

Moreover we observe that $I_{\psi}(u_{j}, \varphi)$ is bounded. Notice that, like in the previous argument, we have 
\begin{align*}
    \int_{X} \psi(u_{j} - \varphi) \theta^{n}_{\varphi} &= \int_{0}^{\infty} \psi'(s) \theta^{n}_{\varphi}\{ u_{j} < \varphi - s\} ds + \int_{0}^{\infty} \psi'(s) \theta^{n}_{\varphi} \{ \varphi < u_{j} - s\}. 
    \intertext{Using comparison principle for the first expression  and the fact that $u_{j} \leq u_{1}$ in the second expression, we get that }
    &\leq \int_{0}^{\infty} \psi'(s) \theta^{n}_{u_{j}} \{ u_{j} < \varphi - s\}ds + \int_{0}^{\infty } \psi'(s) \theta^{n}_{\varphi}\{ \varphi < u_{1} - s\} ds\\
    &\leq \int_{X} \psi(u_{j} - \varphi) \theta^{n}_{u_{j}} + \int_{X} \psi(u_{1} - \varphi) \theta^{n}_{\varphi}.
\end{align*}
Thus $\int_{X} \psi(u_{j} - \varphi) \theta^{n}_{\varphi}$ is bounded from above independent of $j$. 
Now notice that 
\begin{align*}
    \int_{X} \psi(u_{j} - \phi) \theta^{n}_{u_{j}} &\leq I_{\psi}(\phi,  u_{j}). 
    \intertext{By Quasi-triangle inequality for $I_{\psi}$, we get}
    &\leq C (I_{\psi}(\phi, \varphi) + I_{\psi}(u_{j}, \varphi)) \\
    &\leq C \left(I_{\psi}(\phi, \varphi) + \int_{X} \psi(u_{j} - \varphi)\theta^{n}_{u_{j}} + \int_{X} \psi(u_{j} - \varphi) \theta^{n}_{\varphi}\right).
\end{align*}
Thus $\int_{X} \psi(u_{j} - \phi) \theta^{n}_{u_{j}}$ is bounded from above independent of $j$ and $u = \lim_{j} u_{j} \in \PSH(X,\theta)$, thus by  Lemma~\ref{lem: stability} we get that $u \in \E_{\psi}(X,\theta,\phi)$.
\end{proof}

The following theorem is the generalization of \cite[Proposition 2.6]{BDL17}. Also when $\psi(t) = |t|$, the following theorem appers in \cite[Proposition 5.7]{Trusianistrongtopologyofpshfunctions}.

\begin{thm}\label{thm: sandwiching by monotone sequence in prescribed singularity case}
Let $u_{j},u \in \E_{\psi}(X,\theta,\phi)$ be such that $I_{\psi}(u_{j},u) \to 0$. Then there exits a subsequence still denoted by $u_{j}$ and $v_{j} , w_{j} \in \E_{\psi}(X,\theta,\phi)$ such that $v_{j} \leq u_{j} \leq w_{j}$ and $v_{j}$ increase to $u$ a.e. and $w_{j}$ decrease to $u$. Thus, by Proposition~\ref{convergence with monotone sequences in prescribed singularity setting} and monotone convergence theorem, $I_{\psi}(v_{j}, u) \to 0$ and $I_{\psi}(w_{j},u) \to 0$. 
\end{thm}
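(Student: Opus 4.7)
The plan is to pass to a fast subsequence of $\{u_j\}$ and sandwich it between an upper ``max-envelope'' $w_j$ (handled by the same Pythagorean/quasi-triangle chain as in the completeness proof of Theorem~\ref{thm: completeness in prescribed singularity setting}) and a lower $P_\theta$-envelope $v_j$, then identify both monotone limits with $u$ by combining the Monge--Amp\`ere formula for envelopes with the semicontinuity Theorem~\ref{thm: removing uniform boundedness assumption} and the domination principle. First I would extract a subsequence so that $I_\psi(u_j, u) \leq C^{-2j}$, where $C$ is the quasi-triangle constant of Theorem~\ref{thm : quasi-triangle inequality in the prescribed singularity case}; this ensures $\sum_{k \geq 1} I_\psi(u_k, u) < \infty$.

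For the upper sandwich I would set $w_j := \max\bigl((\sup_{k \geq j} u_k)^*, u\bigr)$. Lemma~\ref{lem: the fundamental inequality}, Lemma~\ref{uniformly bounded from above in prescribed singularity case}, and Lemma~\ref{lem: stability} together give $w_j \in \E_\psi(X,\theta,\phi)$; the sequence is decreasing with $w_j \geq u_j$ and $w_j \geq u$. Repeating the Pythagorean-plus-quasi-triangle telescoping chain from Theorem~\ref{thm: completeness in prescribed singularity setting} (now with $u$ included inside the max) will give $I_\psi(u_j, w_j) \to 0$, hence $I_\psi(w_j, u) \to 0$. Since $w_j \searrow w_\infty \geq u$, the monotone convergence theorem applied to the fixed measure $\theta^n_u$ forces $\int_X \psi(w_\infty - u)\,\theta^n_u = 0$, so $w_\infty = u$ on $\operatorname{supp}\theta^n_u$, and Lemma~\ref{lem: domination principle} yields $w_\infty = u$.

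For the lower sandwich I would iterate the binary operator of Section~\ref{subsec: the operator Pthetauv} to define
\[
v_j^{(N)} := P_\theta\bigl(u, u_j, u_{j+1}, \ldots, u_{j+N}\bigr) \in \E_\psi(X,\theta,\phi)
\]
by $(N+1)$-fold application of Theorem~\ref{thm: operator P is stable on E psi}, and set $v_j := \lim_N v_j^{(N)}$. The key estimate relies on the standard envelope Monge--Amp\`ere formula (obtained by iterating the two-variable identity $\theta^n_{P_\theta(f,g)} = \mathds{1}_{\{P = f\}} \theta^n_f + \mathds{1}_{\{P = g,\, P < f\}} \theta^n_g$): $\theta^n_{v_j^{(N)}}$ is supported on the contact sets $\{v_j^{(N)} = u\}$ and $\{v_j^{(N)} = u_k\}$, and is dominated there by $\theta^n_u$ and $\theta^n_{u_k}$ respectively. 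Since $\psi(u - v_j^{(N)}) = 0$ on the first set and equals $\psi(u - u_k)$ on the $k$-th,
\[
\int_X \psi\bigl(u - v_j^{(N)}\bigr)\,\theta^n_{v_j^{(N)}} \;\leq\; \sum_{k=j}^{j+N} \int_X \psi(u - u_k)\,\theta^n_{u_k} \;\leq\; \sum_{k \geq j} I_\psi(u_k, u) \;\leq\; \tilde{C}\, C^{-2j},
\]
uniformly in $N$. Lemma~\ref{lem: decreasing sequence has finite energy in prescribed singularity case} applied with reference $\varphi = u$ will then give $v_j \in \E_\psi(X,\theta,\phi)$. The $v_j$ are increasing with $v_j \leq u_j \wedge u$; the limit $v_\infty := \lim_j v_j$ satisfies $v_\infty \leq u$, and the monotonicity of relative non-pluripolar masses squeezes $\int_X \theta^n_\phi = \int_X \theta^n_{v_j} \leq \int_X \theta^n_{v_\infty} \leq \int_X \theta^n_u = \int_X \theta^n_\phi$, so $v_\infty \in \E(X,\theta,\phi)$. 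A double application of Theorem~\ref{thm: removing uniform boundedness assumption} (first $N \to \infty$ with $v_j^{(N)} \searrow v_j$ in capacity, then $j \to \infty$ with $v_j \nearrow v_\infty$ in capacity) turns the displayed bound into
\[
\int_X \psi(u - v_\infty)\,\theta^n_{v_\infty} \;\leq\; \liminf_j \int_X \psi(u - v_j)\,\theta^n_{v_j} \;=\; 0.
\]
Hence $\theta^n_{v_\infty}(\{v_\infty < u\}) = 0$, and the domination principle forces $v_\infty \geq u$, so $v_\infty = u$.

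The hard part will be the lower sandwich: absent any a-priori capacity or $L^1$ convergence of $u_j \to u$, there is no direct way to compare $v_j$ with $u$. The envelope Monge--Amp\`ere formula gets around this by decomposing the $\psi$-energy of $v_j^{(N)}$ against its own Monge--Amp\`ere measure into per-contact contributions $\int_X \psi(u - u_k)\,\theta^n_{u_k} \leq I_\psi(u_k, u)$, which is controlled precisely by the summability $\sum_k I_\psi(u_k, u) < \infty$ that the fast subsequence was chosen to provide. Once $v_j \nearrow u$ and $w_j \searrow u$ are in hand, Proposition~\ref{convergence with monotone sequences in prescribed singularity setting} delivers the final $I_\psi$-convergences $I_\psi(v_j, u), I_\psi(w_j, u) \to 0$.
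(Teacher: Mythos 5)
Your proposal is correct and follows essentially the same route as the paper: a fast subsequence with $\sum_k I_{\psi}(u_k,u)<\infty$, upper envelopes $(\sup_{k\geq j}u_k)^*$ handled by the telescoping from the completeness proof, lower envelopes $P_{\theta}(\min(u_j,\dots,u_k))$ controlled via the contact-set bound $\theta^{n}_{P}\leq\sum_{l}\mathds{1}_{\{P=u_l\}}\theta^{n}_{u_l}$, the semicontinuity Lemma~\ref{thm: removing uniform boundedness assumption}, and the domination principle. Your only deviations --- including $u$ inside the max/min envelopes, and identifying $w_{\infty}=u$ by monotone convergence plus domination rather than by uniqueness of $I_{\psi}$-limits --- are cosmetic.
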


\begin{proof}
We can pass to a subsequence of $(u_{j})$ such that $I_{\psi}(u_{j}, u) \leq C^{-2j}$ where $C$ is the same constant as in Theorem~\ref{thm : quasi-triangle inequality in the prescribed singularity case}.  By quasi-triangle inequality, $I_{\psi}(u_{j}, u_{j+1}) \leq C^{-2j+2}$. By Lemma~\ref{uniformly bounded from above in prescribed singularity case}, $u_{j}$ are uniformly bounded from above. Thus
\[
w_{j} := (\sup_{k\geq j} u_{k})^{*} \in \E_{\psi}(X,\theta,\phi).
\]
Moreover, by the proof of Theorem~\ref{thm: completeness in prescribed singularity setting} we get that $w_{j}$ is a decreasing sequence in $\E_{\psi}(X,\theta)$ and is equivalent to $u_{j}$. Hence $w_{j} \geq u_{j}$ and $w_{j} \searrow u$ and thus $I_{\psi}(w_{j}, u) \to 0$. 

For $j < k$, define $v_{j}^{k} = P_{\theta}(\min(u_{j} , \dots, u_{k}))$. By Theorem~\ref{thm: operator P is stable on E psi}, $v_{j}^{k} \in \E_{\psi}(X,\theta,\phi)$. Moreover, by \cite[Lemma 3.7]{Darvasmonotonicity}, 
\[
\theta^{n}_{v^{k}_{j}} \leq \sum_{l=j}^{k}\mathds{1}_{\{v_{j}^{k} = u_{l} \}}\theta^{n}_{u_{l}}.
\]
Therefore 
\[
\int_{X}\psi(u - v_{j}^{k}) \theta^{n}_{v_{j}^{k}} \leq \sum_{l=j}^{k} \int_{X}\psi(u - u_{l}) \theta^{n}_{u_{l}} \leq \sum_{l = j}^{k} I_{\psi}(u,u_{l})\leq C^{-2j+2}.
\]
Also $v_{j}^{k}$ is decreasing as $k \to \infty$, thus $v_{j} := \lim_{k}v_{j}^{k} \in \E_{\psi}(X,\theta,\phi)$ by Lemma~\ref{lem: decreasing sequence has finite energy in prescribed singularity case} $v_{j} \in \E_{\psi}(X,\theta,\phi)$. 

Since $v_{j}^{k}$ decreases to $v_{j}$, we get that $v_{j}^{k}\to v_{j}$ in capacity. Moreover, the functions $\psi(u-v_{j}^{k}) \to \psi(u - v_{j})$ in capacity.  Using Lemma~\ref{thm: removing uniform boundedness assumption}, we get that 
\[
\int_{X} \psi(u - v_{j})\theta^{n}_{v_{j}} \leq \liminf_{k\to \infty} \int_{X}  \psi( u - v_{j}^{k}) \theta^{n}_{v_{j}^{k}} \leq C^{-2j+2}.
\]

Since $(\sup_{k\geq j} u_{k})^{*} = w_{j}$, therefore, $\sup_{k\geq j} u_{k} = w_{j}$  a.e. As $w_{j} \searrow u$, we get that $\limsup_{k}u_{k} = u$ a.e. For any $v_{j}$, we have $v_{j} \leq u_{k}$ for $k \geq j$. Taking limsup we get $v_{j} \leq \limsup_{k\geq j} u_{k} = u$. Therefore, $v = (\lim_{j} v_{j})^{*} \leq u$.

By the same argument as before, we have,
\[
\int_{X} \psi(u-v) \theta^{n}_{v} \leq \liminf_{j \to \infty} \int_{X}\psi(u - v_{j})\theta^{n}_{v_{j}} \leq \liminf_{j \to \infty} C^{-2j+2} = 0.
\]
Thus $\theta^{n}_{v}(\{u \neq v\} = 0$. Again using the domination principle (see Lemma~\ref{lem: domination principle}), we get that $u \leq v$ everywhere. This shows that $u = v$. 

Thus we found an increasing sequence $v_{j} \leq u_{j}$ increasing to $u$.
\end{proof}

\begin{cor}\label{cor: new topology is stronger in prescribed singularity case}
The $I_{\psi}$-topology on $\E_{\psi}(X,\theta,\phi)$ is stronger than the usual $L^{1}(\om^{n})$ topology on $\E_{\psi}(X,\theta, \phi)$. More precisely, if $u_{k},u \in \E_{\psi}(X,\theta,\phi)$ such that $I_{\psi}(u_{k},u) \to 0$, then 
\[
\int_{X} |u_{k} - u | \om^{n} \to 0
\]
as $k \to \infty$. 
\end{cor}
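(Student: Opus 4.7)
The plan is to reduce everything to the sandwiching statement in Theorem~\ref{thm: sandwiching by monotone sequence in prescribed singularity case}, then invoke monotone convergence on $\theta$-psh functions (which are automatically in $L^1(\om^n)$), and finally upgrade subsequential $L^1$ convergence to full-sequence convergence via a standard subsubsequence argument.

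First I would apply Theorem~\ref{thm: sandwiching by monotone sequence in prescribed singularity case} to extract a subsequence of $(u_k)$ (still denoted $(u_k)$) together with $v_k, w_k \in \E_{\psi}(X,\theta,\phi)$ satisfying $v_k \leq u_k \leq w_k$ with $v_k \nearrow u$ almost everywhere and $w_k \searrow u$ everywhere. From this sandwich one obtains the pointwise bound
\[
|u_k - u| \leq (u - v_k) + (w_k - u)
\]
almost everywhere, where both terms on the right are nonnegative.

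Next I would observe that $v_k, w_k, u \in \PSH(X,\theta,\phi) \subset \PSH(X,\theta) \subset L^1(\om^n)$. Since $(v_k)$ is an increasing sequence bounded below by $v_1 \in L^1(\om^n)$ and $(w_k)$ is a decreasing sequence bounded above by $w_1 \in L^1(\om^n)$, the monotone convergence theorem applied to $(v_k - v_1)$ and $(w_1 - w_k)$ yields
\[
\int_X (u - v_k)\,\om^n \longrightarrow 0 \qquad \text{and} \qquad \int_X (w_k - u)\,\om^n \longrightarrow 0.
\]
Integrating the pointwise bound then gives $\int_X |u_k - u|\,\om^n \to 0$ along the extracted subsequence.

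To pass from the subsequence to the original sequence, I would note that any subsequence of the original $(u_k)$ still $I_{\psi}$-converges to $u$, so the above argument produces a further sub-subsequence that converges to $u$ in $L^1(\om^n)$. Since every subsequence has an $L^1(\om^n)$-convergent subsubsequence with the common limit $u$, the whole sequence converges to $u$ in $L^1(\om^n)$. The main conceptual step, namely the existence of the monotone envelopes $v_k$ and $w_k$, is already provided by Theorem~\ref{thm: sandwiching by monotone sequence in prescribed singularity case}, so no serious obstacle remains: the rest is routine measure-theoretic bookkeeping.
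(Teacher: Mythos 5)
Your proposal is correct and follows essentially the same route as the paper: both reduce to the subsequence furnished by Theorem~\ref{thm: sandwiching by monotone sequence in prescribed singularity case}, bound $|u_k-u|$ by $(w_k-u)+(u-v_k)$, and conclude by monotone convergence, with the paper's opening remark ``it is enough to show $L^1$ convergence for a subsequence'' being exactly the sub-subsequence argument you spell out at the end.
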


\begin{proof}
It is enough to show $L^{1}(\om^{n})$ convergence for a subsequence. Let $u_{j_{k}}$ be the subsequence provided by Theorem~\ref{thm: sandwiching by monotone sequence in prescribed singularity case} and $v_{j_{k}}$ and $w_{j_{k}}$ be corresponding monotone sequences. Then $v_{j_{k}} \leq u_{j_{k}} \leq w_{j_{k}}$ and $v_{j_{k}} \leq u \leq w_{j_{k}}$. Then 
\begin{align*}
\int_{X} |u_{j_{k}} - u| \om^{n} &\leq \int_{X} (w_{j_{k}} - v_{j_{k}})\om^{n} \\
&\leq \int_{X} (w_{j_{k}} - u) \om^{n} + \int_{X} (u - v_{j_{k}})\om^{n}\\
& \to 0
\end{align*}
by the monotone convergence theorem. Thus the new topology is stronger and has more open, thus closed sets. 
\end{proof}

\begin{thm}\label{thm: convergence in capacity in prescribed singularity case}
If $u_{j}, u \in \E_{\psi}(X,\theta,\phi)$, such that $I_{\psi}(u_{j}, u ) \to 0$ as $j \to \infty$, then $u_{j} \to u$ in capacity.
\end{thm}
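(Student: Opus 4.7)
My plan is to combine the sandwiching Theorem~\ref{thm: sandwiching by monotone sequence in prescribed singularity case} with the fact that monotone sequences of quasi-psh functions converge in capacity, via a standard sub-subsequence argument. Since convergence in capacity is a topological notion, it suffices to show that every subsequence of $(u_j)$ admits a further subsequence converging to $u$ in capacity. Fix such a subsequence; it still satisfies $I_\psi(u_j,u) \to 0$, so Theorem~\ref{thm: sandwiching by monotone sequence in prescribed singularity case} extracts a further subsequence $(u_{j_k})$ together with $v_k, w_k \in \E_\psi(X,\theta,\phi)$ with $v_k \leq u_{j_k} \leq w_k$, $v_k \nearrow u$ almost everywhere, and $w_k \searrow u$ everywhere. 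Pointwise monotonicity together with $u = (\sup_k v_k)^*$ (after the usual identification of $u$ with its USC representative, modifying on a pluripolar set) gives $v_k \leq u \leq w_k$ everywhere, so for any $\delta > 0$,
\[
\{|u_{j_k} - u| > \delta\} \subset \{w_k - u > \delta/2\} \cup \{u - v_k > \delta/2\}.
\]
The problem thus reduces to bounding the $\CAP_\om$-capacity of these two sets.

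For both monotone cases I would use a quasi-continuity-plus-Dini argument. Choose $A > 0$ with $\theta \leq A\om$, so $w_k, v_k, u \in \PSH(X, A\om)$ and are therefore $\CAP_\om$-quasi-continuous. Given $\ee > 0$, absorb the countably many exceptional open sets from the quasi-continuity of the $w_k$, the $v_k$, and $u$, together with the pluripolar set on which $\sup_k v_k$ differs from $u$, into a single open set $O$ with $\CAP_\om(O) < \ee$. On the compact set $X \setminus O$ all the $w_k$, $v_k$ and $u$ are continuous, with $w_k \searrow u$ and $v_k \nearrow u$ pointwise. Dini's theorem then gives uniform convergence on $X \setminus O$, so for $k$ sufficiently large $\{w_k - u > \delta/2\} \cup \{u - v_k > \delta/2\} \subset O$, whence each has $\CAP_\om$-capacity at most $\ee$. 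Since $\ee > 0$ was arbitrary, $\CAP_\om(\{|u_{j_k} - u| > \delta\}) \to 0$ as $k \to \infty$.

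The main technical point is merely bookkeeping of the quasi-continuity exceptional sets uniformly in $k$, which is routine thanks to countable subadditivity of $\CAP_\om$. Note that, unlike in Proposition~\ref{convergence with monotone sequences in prescribed singularity setting}, no truncation $\max(\cdot,\phi-L)$ is needed here: quasi-continuity already accommodates the unbounded / prescribed-singularity nature of the potentials, and uniform convergence on $X \setminus O$ via Dini bypasses the need for uniform boundedness in this step. The sub-subsequence argument then upgrades subsequential convergence in capacity to convergence in capacity of the full sequence $(u_j)$, completing the proof.
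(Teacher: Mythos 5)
Your argument is correct and follows the paper's proof essentially verbatim: the same reduction to sub-subsequences, the same appeal to Theorem~\ref{thm: sandwiching by monotone sequence in prescribed singularity case} to produce the sandwiching sequences $v_k \leq u_{j_k} \leq w_k$, and the same inclusion $\{|u_{j_k}-u|>\delta\} \subset \{w_k - u > \delta/2\} \cup \{u - v_k > \delta/2\}$ followed by subadditivity of $\CAP_{\om}$. The only difference is that you spell out, via quasi-continuity and Dini, the standard fact that monotone sequences of quasi-psh functions converge in capacity, which the paper simply invokes.
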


\begin{proof}
It is enough to show the convergence in capacity for a subsequence. Let $u_{j_{k}}$ be a subsequence as provided by Theorem~\ref{thm: sandwiching by monotone sequence in prescribed singularity case} and $v_{j_{k}}$ and $w_{j_{k}}$ are corresponding monotone sequences converging to $u$.  We get that $v_{j_{k}} \to u$ and $w_{j_{k}} \to u$ in capacity. We want to claim that $u_{j_{k}} \to u$ in capacity as well. Fix $\ee > 0$. 
\[
\{ | u_{j_{k}} - u| > \ee\} \subset \{ w_{j_{k}} - v_{j_{k}} > \ee\} \subset \{ w_{j_{k}} - u > \ee/2\} \cup \{ u - v_{j_{k}} > \ee/2\}.
\]
Taking capacity of above sets we get
\[
\CAP_{\om}\{|u_{j_{k}} - u|> \ee\} \leq \CAP_{\om}\{ w_{j_{k}} - u> \ee/2\} + \CAP_{\om}\{ u - v_{j_{k}} > \ee/2\}
\]
and taking limit $k \to \infty$ we get $\lim_{k\to \infty}\CAP_{\om}\{ |u_{j_{k}} - u| > \ee\} = 0$. 

Since for any subsequence $u_{j_{k}}$ of $u_{j}$ we can find another subsequence $u_{j_{k_{l}}}$ for which $\lim_{l\to \infty}\CAP_{\om}\{|u_{j_{k_{l}}} - u| > \ee\} = 0$, we get that $\lim_{j\to \infty}\CAP_{\om}\{|u_{j} - u|>\ee\} = 0$. Thus $I_{\psi}$ convergence implies convergence in capacity.
\end{proof}

\begin{cor}[Weak convergence of measures] \label{cor: weak convergence of measures in prescribed singularity case}
If $u_{k}, u \in \E_{\psi}(X,\theta, \phi)$ such that $I_{\psi}(u_{k}, u ) \to 0$, then $\theta^{n}_{u_{k}} \to \theta^{n}_{u}$ weakly as measures. 

\end{cor}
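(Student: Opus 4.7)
The plan is to reduce this corollary to the convergence in capacity already established in Theorem~\ref{thm: convergence in capacity in prescribed singularity case}, by invoking the generalized lower semicontinuity result of Lemma~\ref{thm: removing uniform boundedness assumption}. Since $I_\psi(u_k,u) \to 0$, we first apply Theorem~\ref{thm: convergence in capacity in prescribed singularity case} to conclude $u_k \to u$ in capacity. That is the input we will feed into the lower semicontinuity lemma.

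Fix any $f \in C^0(X)$ and choose a constant $M$ large enough that both $f+M$ and $M-f$ are non-negative on $X$. Continuous functions are trivially quasi-continuous and trivially converge in capacity to themselves, so we may apply Lemma~\ref{thm: removing uniform boundedness assumption} with all the $(1,1)$-forms taken to be $\theta$, with $u_j = u$ and $u_j^k = u_k$ for every slot $j$, and with $\chi_k = \chi = f + M$. This yields
\[
\int_X (f+M)\,\theta^n_u \;\leq\; \liminf_{k\to\infty}\int_X (f+M)\,\theta^n_{u_k},
\]
and the analogous inequality with $\chi = M - f$.

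The clinching ingredient is mass conservation: because $u,u_k \in \E(X,\theta,\phi)$, by definition $\int_X \theta^n_u = \int_X \theta^n_{u_k} = \int_X \theta^n_\phi =: V$, so the constant $M$ contributes $MV$ on both sides of each inequality and cancels. The inequality coming from $M-f$ therefore reads $-\int_X f\,\theta^n_u \leq -\limsup_k \int_X f\,\theta^n_{u_k}$, i.e.\ a limsup bound, while the inequality from $f+M$ gives the matching liminf bound. Combining these shows $\int_X f\,\theta^n_{u_k} \to \int_X f\,\theta^n_u$ for every continuous $f$, which is precisely weak convergence of the non-pluripolar Monge-Amp\`ere measures. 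I do not anticipate any serious obstacle here, since the technical work was already done in Theorem~\ref{thm: convergence in capacity in prescribed singularity case} and Lemma~\ref{thm: removing uniform boundedness assumption}; the only subtlety is noticing that passing from a one-sided liminf inequality to full weak convergence requires exactly the total-mass equality built into the relative full-mass class.
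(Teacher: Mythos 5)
Your proposal is correct. The first step is identical to the paper's: apply Theorem~\ref{thm: convergence in capacity in prescribed singularity case} to get $u_k \to u$ in capacity. Where you diverge is the second step. The paper simply cites an external result (Theorem 2.3 of Darvas--Di Nezza--Lu) stating that convergence in capacity of full-mass potentials implies weak convergence of the non-pluripolar Monge--Amp\`ere measures, whereas you re-derive this implication in the case at hand from Lemma~\ref{thm: removing uniform boundedness assumption}, testing against $f+M$ and $M-f$ and cancelling the constant via the total-mass identity $\int_X\theta^n_{u_k}=\int_X\theta^n_u=\int_X\theta^n_\phi$. Your argument is sound: the lemma applies since $f\pm M$ is continuous (hence quasi-continuous) and non-negative for $M\ge\sup_X|f|$, the total mass is finite so the constant $MV$ genuinely cancels, and the two one-sided liminf inequalities combine into convergence of $\int_X f\,\theta^n_{u_k}$. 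What your route buys is self-containment --- the weak convergence is deduced entirely from results already proved or stated in the paper rather than from an additional citation; the cost is that you are essentially reproving a special case of the cited theorem, which is itself established by the same kind of quasi-continuity/semicontinuity argument.
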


\begin{proof}
Since $u_{k} \to u$ in $I_{\psi}$, Theorem~\ref{thm: convergence in capacity in prescribed singularity case} shows that $u_{k} \to u$ in capacity. Since $u_{k}, u$ have full mass,  \cite[Theorem 2.3]{Darvasmonotonicity} (see also \cite[Theorem 1]{XingCMAonopendomainsr} and  \cite[Theorem 1]{XingCMAoncompactKahler}) implies that $\theta^{n}_{u_{k}} \to \theta^{n}_{u}$ weakly.
\end{proof}

\section{K\"ahler Ricci flow} \label{sec: Kahler Ricci flow}
In \cite{Guedj2013RegularizingPO}, authors showed that we can start K\"ahler Ricci flow from any potential $\varphi$ with zero Lelong numbers. They showed that if $\varphi \in \PSH(X,\om)$ has zero Lelong numbers then there exist smooth potentials $\varphi_{t}$ for small time such that 
\begin{equation} \label{eq: kahler ricci flow equation}
    \frac{\dd\varphi_{t}}{\dd t} = \log \left[ \frac{(\om +dd^{c}\varphi_{t})^{n}}{\om^{n}} \right], \varphi_{t} \to \varphi \text{ in } L^{1}(\om^{n}) \text{ as } t\to 0.
\end{equation}

Di Nezza-Lu \cite[Corollary 5.2]{dinezzalukahlerricciflow} further showed that $\varphi_{j} \to \varphi$ in capacity. 

 Without loss of generality, we can assume that $\varphi \leq -1$.  In \cite[Lemma 2.9]{Guedj2013RegularizingPO}, the authors showed that (for $\bb =1$ and $\al = 1$) for a continuous $\om$-psh function $u$, satisfying 
\[
(\om + dd^{c}u)^{n} = e^{u - 2\varphi}\om^{n}
\]
we have
\[
(1-2t)\varphi (x) + tu(x) + n(t\ln t - t) \leq \varphi_{t}.
\]

Since $\varphi \leq -1$, we have $\varphi \leq (1-2t)\varphi$ for small $t$ and since $u$ is continuous there exists some constant $C$ such that $C \leq u$. Combining these, we get 
\[
\varphi(x) + tC + n(t\ln t - t) \leq \varphi_{t}
\]
or 
\[
\varphi(x) \leq \varphi_{t} - tC - n(t\ln t - t).
\]
Notice that $-tC - n(t\ln t - t) = f(t)$ satisfies $f(t) \to 0$ as $t \to 0$. 

Since $\varphi_{t} \to \varphi$ in $L^{1}(\om^{n})$ as $t\to 0$, we also obtain that $\varphi_{t} + f(t)$ converge to $\varphi$ in $L^{1}(\om^{n})$ as $t\to 0$.

\begin{lem} \label{lem: l1 convergence implies convergence of measures}
Given $\varphi \in \E_{\psi}(X,\om)$ and a sequence $\varphi_{j} \in \E_{\psi}(X,\om)$ such that 
\[
\varphi \leq \varphi_{j}
\]
and $\varphi_{j} \to \varphi$ in $L^{1}(\om^{n})$ as $j \to \infty$, then $I_{\psi}(\varphi_{j}, \varphi) \to 0$ as $j \to \infty$.. 
\end{lem}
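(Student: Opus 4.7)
The plan is to bound $I_\psi(\varphi_j, \varphi)$ by a constant multiple of the single integral $\int_X \psi(\varphi_j - \varphi) \om^n_\varphi$ via the comparison principle (using the one-sided bound $\varphi \leq \varphi_j$), and then to control this integral by sandwiching $\varphi_j$ from above by a decreasing sequence $w_j \searrow \varphi$ and invoking Proposition~\ref{convergence with monotone sequences in prescribed singularity setting}.

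Since it suffices to show $I_\psi(\varphi_j, \varphi) \to 0$ along every subsequence, we pass to a subsequence along which $\varphi_j \to \varphi$ a.e.\ and $\{\varphi_j\}$ is uniformly bounded from above. Set $w_j := (\sup_{k \geq j} \varphi_k)^*$; these are $\om$-psh, decreasing, and satisfy $\varphi \leq \varphi_j \leq w_j$. Because $\limsup_k \varphi_k = \varphi$ a.e.\ and $\om^n$ does not charge pluripolar sets, one has $w_j \searrow \varphi$ a.e. Since $\varphi \in \E_\psi(X, \om)$ and $\varphi \leq w_j \in \PSH(X,\om) = \PSH(X,\om,V_\om)$, Lemma~\ref{lem: the fundamental inequality} yields $w_j \in \E_\psi(X, \om)$, and Proposition~\ref{convergence with monotone sequences in prescribed singularity setting} applied to the monotone sequence $w_j \searrow \varphi$ then gives $I_\psi(w_j, \varphi) \to 0$. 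In particular,
\[
\int_X \psi(\varphi_j - \varphi) \om^n_\varphi \leq \int_X \psi(w_j - \varphi) \om^n_\varphi \to 0,
\]
where the first inequality uses $0 \leq \varphi_j - \varphi \leq w_j - \varphi$ and the monotonicity of $\psi$ on $[0, \infty)$.

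For the integral against the moving measure $\om^n_{\varphi_j}$, fix $s > 0$ and apply the comparison principle (Lemma~\ref{lem: comparison principle}) to $u = \varphi$ and $v = \varphi_j - s$, both of which lie in $\E(X, \om)$. Since $\om^n_{\varphi_j - s} = \om^n_{\varphi_j}$, this produces
\[
\om^n_{\varphi_j}\{\varphi_j > \varphi + s\} \leq \om^n_\varphi\{\varphi_j > \varphi + s\}.
\]
Using the layer-cake identity
\[
\int_X \psi(\varphi_j - \varphi)\, d\mu = \int_0^\infty \psi'(s)\, \mu\{\varphi_j - \varphi > s\}\, ds,
\]
valid for any non-pluripolar positive measure $\mu$ since $\varphi_j - \varphi \geq 0$ and $\psi$ is smooth increasing on $(0, \infty)$, we integrate the preceding comparison-principle estimate against $\psi'(s)\, ds$ to obtain
\[
\int_X \psi(\varphi_j - \varphi) \om^n_{\varphi_j} \leq \int_X \psi(\varphi_j - \varphi) \om^n_\varphi \to 0.
\]
Adding the two bounds gives $I_\psi(\varphi_j, \varphi) \to 0$ along the subsequence, and hence along the full sequence. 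The main step requiring care is the justification of $w_j \searrow \varphi$ a.e.\ after passing to a subsequence; once that is set up, the argument is a clean combination of the comparison principle with the monotone-sequence case of Proposition~\ref{convergence with monotone sequences in prescribed singularity setting}.
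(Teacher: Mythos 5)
Your proof is correct and follows essentially the same route as the paper: pass to an a.e.-convergent subsequence, dominate $\varphi_j$ by the decreasing upper envelopes $w_j \searrow \varphi$ to handle the integral against the fixed measure $\om^n_\varphi$, and use the comparison principle with the layer-cake formula to reduce the integral against $\om^n_{\varphi_j}$ to the one against $\om^n_\varphi$. The only cosmetic difference is that you invoke Proposition~\ref{convergence with monotone sequences in prescribed singularity setting} for the envelopes, whereas the paper gets $\int_X \psi(w_j-\varphi)\,\om^n_\varphi \to 0$ directly from the monotone convergence theorem since the measure is fixed.
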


\begin{proof}

Since $\varphi_{j} \to \varphi$ in $L^{1}(\om^{n})$, we get a subsequence $\varphi_{j_{k}}$ such that $\varphi_{j_{k}} \to \varphi$, a.e.. Consider the functions 
\[
v_{j} = \sup_{k\geq j} \varphi_{j_{k}}.
\]
Then $v_{j}^{*} \in \PSH(X,\om)$ and $v_{j}^{*} = v_{j}$ except on a pluripolar set. Moreover, since $v_{j}^{*} \searrow \varphi$ $\om^{n}$ almost everywhere, we get that $v_{j}^{*} \searrow \varphi$ everywhere. 

This shows that $v_{j} \searrow \varphi$ except on a pluripolar set. This means $\lim_{j\to \infty} \sup_{k\geq j}\varphi_{k_{j}} = \varphi$ except on a pluripolar set. Therefore
\[
\limsup_{k\to \infty} \varphi_{j_{k}} = \varphi 
\]
except on a pluripolar set. Since $\varphi_{j_{k}} \geq \varphi$, we automatically have 
\[
\liminf_{k\to \infty} \varphi_{j_{k}} \geq \varphi
\]
everywhere. 

Therefore, $\varphi_{j_{k}} \to \varphi$ except on a pluripolar set. Using $\varphi \leq \varphi_{j_{k}} \leq v_{k}$ and that $v_{k}\searrow \varphi$ as $k \to \infty$, we get 
\begin{equation}
\lim_{k\to \infty} \int_{X}\psi(\varphi_{j_{k}} - \varphi) \om^{n}_{\varphi} \leq \lim_{k \to \infty} \int_{X} \psi(v_{k} - \varphi) \om^{n}_{\varphi} = 0. 
\end{equation}
Moreover, since $\varphi_{j_{k}} \geq \varphi$, we have
\begin{align*}
    \int_{X}\psi(\varphi_{j_{k}} - \varphi) \om^{n}_{\varphi_{j_{k}}} &= \int_{0}^{\infty} \psi'(s) \om^{n}_{\varphi_{j_{k}}} (\varphi_{j_{k}} > \varphi + s)ds \intertext{Using comparison theorem, we get}
    &\leq \int_{0}^{\infty} \psi'(s) \om^{n}_{\varphi}(\varphi_{j_{k}} > \varphi + s) ds \\
    &= \int_{X} \psi(\varphi_{j_{k}} - \varphi)\om^{n}_{\varphi}. 
    \intertext{Taking limit we get }
    \lim_{k\to \infty} \int_{X}\psi(\varphi_{j_{k}} - \varphi)\om^{n}_{\varphi_{j_{k}}} &\leq \lim_{k\to \infty} \int_{X} \psi(\varphi_{j_{k}} - \varphi)\om^{n}_{\varphi} = 0.
\end{align*}
Combining these two we get 
\[
\lim_{k \to \infty} I_{\psi}(\varphi_{j_{k}}, \varphi) = \lim_{k\to \infty} \int_{X} \psi(\varphi_{j_{k}} - \varphi)(\om^{n}_{\varphi_{j_{k}}} + \om^{n}_{\varphi}) = 0.
\]
Since this holds for any subsequence of $\varphi_{j}$, we get that 
\[
I_{\psi}(\varphi_{j}, \varphi) \to 0.
\]
\end{proof}

The following corollary generalizes the \cite[Proposition 5.2]{Guedj2013RegularizingPO} where they show that $\varphi_{t} \to \varphi$ in energy if $\varphi \in \E^{1}(X,\om)$.
\begin{cor}\label{cor: measures in kahler ricci flow}
Let $\varphi \in \E_{\psi}(X,\om)$. Let $\varphi_{t}$ be the solution to Equation~\eqref{eq: kahler ricci flow equation}. Then $I_{\psi}(\varphi_{t}, \varphi) \to 0$ as $t \to 0$. 
\end{cor}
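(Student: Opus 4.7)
The plan is to combine the pointwise lower bound from Guedj–Zeriahi's Laplacian estimate, already recorded in the excerpt, with Lemma~\ref{lem: l1 convergence implies convergence of measures}, and then absorb the constant shift via the subadditivity $\psi(a+b)\leq \psi(a)+\psi(b)$. Concretely, the excerpt gives us a continuous bounded-below function $f(t)$ with $f(t)\to 0$ as $t\to 0$ such that
\[
\varphi(x) \leq \varphi_{t}(x) + f(t) \quad\text{on } X,
\]
together with $\varphi_{t} + f(t)\to \varphi$ in $L^{1}(\om^{n})$ as $t\to 0$, because $\varphi_{t}\to \varphi$ in $L^{1}(\om^{n})$ and $f(t)\to 0$. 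Since $\varphi_{t}$ is smooth, $\varphi_{t}+f(t)\in \E_{\psi}(X,\om)$ for every weight $\psi \in \mathcal{W}^{-}$.

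The first step would be to apply Lemma~\ref{lem: l1 convergence implies convergence of measures} to the shifted potentials. Given any sequence $t_{j}\to 0$, the sequence $\tilde\varphi_{j} := \varphi_{t_{j}}+f(t_{j})$ satisfies $\varphi \leq \tilde\varphi_{j}$ and $\tilde\varphi_{j}\to\varphi$ in $L^{1}(\om^{n})$, hence
\[
I_{\psi}\bigl(\varphi_{t_{j}}+f(t_{j}),\,\varphi\bigr) \longrightarrow 0 \quad\text{as } j\to\infty.
\]

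The second step is to pass from $\varphi_{t}+f(t)$ back to $\varphi_{t}$. Observe that adding a constant does not change the non-pluripolar Monge--Amp\`ere measure, so $\om^{n}_{\varphi_{t}+f(t)}=\om^{n}_{\varphi_{t}}$. By the subadditivity $\psi(a+b)\leq \psi(a)+\psi(b)$ (\cite[Lemma 2.6]{darvas2021mabuchi}),
\[
\psi(\varphi_{t}-\varphi) = \psi\bigl((\varphi_{t}+f(t)-\varphi) - f(t)\bigr) \leq \psi(\varphi_{t}+f(t)-\varphi) + \psi(f(t)).
\]
Integrating against $\om^{n}_{\varphi_{t}}+\om^{n}_{\varphi}$ yields
\[
I_{\psi}(\varphi_{t},\varphi) \leq I_{\psi}\bigl(\varphi_{t}+f(t),\varphi\bigr) + 2V\,\psi(f(t)),
\]
where $V=\int_{X}\om^{n}$. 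As $t\to 0$ the first term tends to $0$ by the previous step, and the second tends to $0$ since $f(t)\to 0$ and $\psi$ is continuous with $\psi(0)=0$. This proves $I_{\psi}(\varphi_{t},\varphi)\to 0$.

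I do not anticipate a serious obstacle: the pointwise comparison $\varphi\leq \varphi_{t}+f(t)$ is the only nontrivial input, and that is already quoted from \cite{Guedj2013RegularizingPO} in the paragraphs preceding the statement. Everything else is a clean combination of Lemma~\ref{lem: l1 convergence implies convergence of measures} (which handles the monotone-from-above case producing a finite-energy limit) with the elementary subadditivity of concave even weights. The only minor care needed is to check that the sequential conclusion $I_{\psi}(\varphi_{t_{j}},\varphi)\to 0$ for every $t_{j}\to 0$ actually gives the continuous limit $I_{\psi}(\varphi_{t},\varphi)\to 0$, which follows by contradiction by extracting a bad subsequence.
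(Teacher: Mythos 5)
Your proposal is correct and follows essentially the same route as the paper: apply Lemma~\ref{lem: l1 convergence implies convergence of measures} to the shifted potentials $\varphi_{t}+f(t)$, which dominate $\varphi$ and converge to it in $L^{1}(\om^{n})$. The paper leaves the absorption of the constant $f(t)$ implicit, whereas you spell it out via $\psi(a+b)\leq\psi(a)+\psi(b)$ and the invariance of $\om^{n}_{\varphi_{t}}$ under constant shifts; this is a harmless and welcome elaboration, not a different argument.
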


\begin{proof}
We saw earlier that the assumptions imply  that
\[
\varphi \leq \varphi_{t} + f(t)
\]
where $f(t) \to 0$ as $t \to 0$. Since $\varphi_{t} + f(t)$ are also $\theta$-psh functions which converge in $L^{1}(\om^{n})$ to $\varphi$ as $t\to 0$, using Lemma~\ref{lem: l1 convergence implies convergence of measures}, we get that $I_{\psi}(\varphi_{t}, \varphi) \to 0$ as $t \to 0$. 
\end{proof}

Note that using Theorem~\ref{thm: convergence in capacity in prescribed singularity case} and Corollary~\ref{cor: weak convergence of measures in prescribed singularity case} we obtain that $\varphi_{t} \to \varphi$ in capacity and $\om^{n}_{\varphi_{t}} \to \om^{n}_{\varphi}$ as $t \to 0$, thus finishing the proof of Theorem~\ref{thm: kahler-ricci flow}.

\printbibliography

\end{document}